\theoremstyle{plain}
\newtheorem{theorem}{Theorem}
\newtheorem*{nonum-theorem}{Theorem}
\newtheorem{proposition}{Proposition}
\newtheorem{lemma}{Lemma}
\newtheorem{lemma-remark}{Lemma-remark}
\newtheorem{sublemma}{Sublemma}[lemma]
\newtheorem{corollary}{Corollary}
\newtheorem{nonum-corollary}{Corollary}
\newtheoremstyle{handleNumber}{}{}{\itshape}{}{}{}{\newline}{{\bf #1} \thmnote{#3}}
\theoremstyle{handleNumber}
\newtheorem*{handnum-theorem}{Theorem}
\theoremstyle{definition}
\newtheorem{definition}{Definition}
\newtheorem*{nonum-definition}{Definition}
\theoremstyle{remark}
\newtheorem{remark}{Remark}
\renewcommand{\leq}{\leqslant}
\renewcommand{\geq}{\geqslant}
\begin{document}

\newcommand{\Spec}{\mathrm{Spec}\,}
\newcommand{\codim}{\mathrm{codim}\,}
\newcommand{\hocoeq}{\mathrm{hocoeq}\,}
\newcommand{\colim}{\operatorname{colim}}

\newcommand{\Fr}{\mathrm{Fr}}
\newcommand{\ZF}{\mathbb Z\mathrm{F}}
\newcommand{\qf}{\mathrm{qf}}
\newcommand{\Frqf}{\Fr^\qf}

\newcommand{\A}{\mathbb A}

\newcommand{\fr}{\mathrm{fr}}
\newcommand{\nis}{\mathrm{nis}}

\newcommand{\Sm}{\mathrm{Sm}}
\newcommand{\AffSm}{\mathrm{AffSm}}
\newcommand{\Shv}{\mathrm{Shv}}
\newcommand{\sShv}{\mathrm{sShv}}

\newcommand{\PP}{\mathbb P}
\newcommand{\Gm}{\mathbb G_m}
\newcommand{\sMSfr}{\mathcal{M}^{S^1}_\fr}
\newcommand{\MSfr}{\mathrm{M}^{S^1}_\fr}
\newcommand{\sMGcSfr}{\mathcal{M}^{(\Gm,S^1)}_\fr}
\newcommand{\MGcSfr}{\mathrm{M}^{(\Gm,S^1)}_\fr}
\newcommand{\sMGwSfr}{\mathcal{M}^{(\Gm^{\wedge 1}\wedge S^1)}_\fr}
\newcommand{\MGwSfr}{\mathrm{M}^{(\Gm^{\wedge 1}\wedge S^1)}_\fr}
\newcommand{\sMcTfr}{\mathcal{M}^{(\A^1//\Gm)}_\fr}
\newcommand{\McTfr}{\mathrm{M}^{(\A^1//\Gm)}_\fr}
\newcommand{\sMTfr}{\mathcal{M}^{T}_\fr}
\newcommand{\MTfr}{\mathrm{M}^{T}_\fr}
\newcommand{\sMPfr}{\mathcal{M}^{\PP^1}_\fr}
\newcommand{\MPfr}{\mathrm{M}^{\PP^1}_\fr}
\newcommand{\Mfr}{\mathrm{M}_{\fr}}
\newcommand{\MP}{\mathrm{M}_{\PP^1}}
\newcommand{\sMGfr}{\mathcal{M}^{\Gm}_\fr}
\newcommand{\MGfr}{\mathrm{M}^{\Gm}_\fr}

\newcommand{\cVm}{\stackrel{m}{\mathcal V}}
\newcommand{\cEm}{\stackrel{m}{\mathcal E}}
\newcommand{\cSm}{\stackrel{m}{S}}
\newcommand{\pr}{\mathrm{pr}}



\title{Framed motives of smooth affine pairs.}


\author{A.~Druzhinin}

\begin{abstract}
The theory of framed motives by Garkusha and Panin gives
computations in the stable motivic homotopy category $\mathbf{SH}(k)$
in terms of Voevodsky's framed correspondences.
In particular 
the  motivically fibrant $\Omega$-resolution in positive degrees 
of the 
motivic suspension spectrum $\Sigma_{\mathbb P^1}^\infty X_+$, where $X_+=X\amalg *$,
for a smooth scheme $X\in \Sm_k$ over an infinite perfect field $k$,
is computed. 

The 
computation by Garkusha, Neshitov and Panin
of the framed motives of relative motivic spheres 
$(\A^l\times X,(\A^l-0)\times X)$, $X\in \Sm_k$,
is one of ingredients in the theory. 
In the article we extend this result 
to the case of 
a pair $(X,U)$ given by a smooth affine variety $X$ over $k$
and an open subscheme $U\subset X$.

The result gives the explicit  motivically fibrant $\Omega$-resolution in positive degrees 
for the  motivic suspension spectrum $\Sigma_{\mathbb P^1}^\infty (X_+/U_+)$ of the factor-sheaf $X_+/U_+$. 

%
\end{abstract}

%

\address{
Chebyshev Laboratory, St. Petersburg State University, 
14th Line V.O., 29B, 
Saint Petersburg 199178 Russia; 
andrei.druzh@gmail.com
}

\keywords{
stable motivic homotopy theory, fibrant resolutions, framed motives, moving lemmas
}

\subjclass[2010]{14F42 55Q10}

\maketitle

\section{Introduction}
\label{sect:Intr}

In the unpublished notes \cite{VoevFrCor}
Voevodsky 
had suggested the computational approach to the Morel-Voevodsky 
stable motivic homotopy category $\mathbf{SH}(k)$ 
\cite{Morel-Voevodsky}, \cite{Jardine-spt}, \cite{Mor0}
over a perfect base field.
Realising this 
idea Garkusha and Panin 
constructed the theory of framed motives over an infinite perfect field $k$, 
see \cite{GP_MFrAlgVar} and \cite{AGP-FrCanc},\cite{GP-HIVth}, \cite{GNP_FrMotiveRelSphere}. 
The 
aim of the present article 
is to extend 
the computations
\cite[th. 4.1, 11.1]{GP_MFrAlgVar} 
of
stable motivic fibrant resolutions of 
the motives 
of 
smooth schemes $X$ 
to the case of 
open pair factor-sheaf 
$X/U$ 
with $X$ smooth affine. 

\newcommand{\arSmp}{\Delta^\mathrm{op}\overrightarrow{\Sm_+}}


In \cite{VoevFrCor} for any smooth scheme 
$X$ 
and open $U\subset X$
Voevodsky had introduced 
pointed sheaves $\Fr(-, X/U)\in \mathrm{Shv}_\bullet$
of 
so-called 
stable
Voevodsky's 
framed correspondences.
According to the definition \cite[def. 2.8]{GP_MFrAlgVar} and fundamental Voevodsky's lemma \cite[lm. 3.5, prop. 3.2]{GP_MFrAlgVar} 
\begin{multline*}
\Fr( - , X/U ) = \varinjlim_n\Fr_n( - , X/U ),\\
\Fr_n( - , X/U )=
Hom_{\mathrm{Shv}_\bullet}(-\wedge (\PP^1/\infty)^{\wedge n}, X/U\wedge (\A^1/\Gm)^{\wedge n}),
\end{multline*}
where $\mathrm{Shv}_\bullet$ denotes the pointed Nisnevich sheaves,
and 
sections of $\Fr_n(-,X/U)$ 
have precise geometrical description, see def. \ref{def:FrCor} or \cite[def. 2.5]{GP_MFrAlgVar}.

In \cite[theorem 11.7]{GP_MFrAlgVar} 
Garkusha and Panin
computed the functor 
\begin{equation}\label{eq:intr:OmGSigGSigS}\Omega^\infty_{\Gm}\Sigma^\infty_{\Gm}\Sigma^\infty_{S^1}\colon \mathbf{H}_\bullet(k)\to \mathbf{SH}_{S^1}(k).\end{equation}
from the unstable pointed motivic homotopy category to 
the $S^1$-stable one
as the functor 
\[\Mfr\colon \sShv_\bullet\to \Spec_{S^1}( \sShv_\bullet )\]
from the category of pointed simplicial sheaves to the motivic $S^1$-spectra,
such that by \cite[ths. 10.1(2), 7.6]{GP_MFrAlgVar} 
the level injective local fibrant replacement $\Mfr(-)_f$ 
lands in motivically fibrant $\Omega_{S^1}$-spectra in positive degrees.

For a smooth scheme $X$ over $k$ 
the spectrum $\Mfr(X)$ is equal to
\begin{equation}\label{eq:intr:SpectrumFormula(Mfr(X))} 
(C^*\Fr(-, X ), C^*\Fr(-,X\wedge S^1), \dots C^*\Fr(-,X\wedge S^i)\dots),
\end{equation}
where
the endo-functor
$C^*\colon F (-)
\mapsto F(\Delta^\bullet\times -)$ 
on 
$\sShv_\bullet$
is corepresented by the co-simplicial scheme $\Delta^\bullet$, 
$\Delta^n=\{(x_0,\dots x_n)\in \A^{n+1}, x_1+\dots x_n=1\}$, 
and the pointed simplicial sheaf $\Fr(-,\mathcal Y)$, 
for a simplicial scheme $\mathcal Y$,
is defined by the pointed sheaves $\Fr(-,Y)$,
for $Y\in \Sm_k$.

In the article we 
extend so-called Cone Theorem by Garkusha, Neshitov and Panin \cite{GNP_FrMotiveRelSphere},
and
prove that 
for a smooth affine $X$ over $k$
and open $U\subset X$,
the spectrum $\Mfr(X_+/U_+)_f$, where $X_+/U_+$ is the pointed factor-sheaf,
is schemewise simplicially weak equivalent to
\begin{equation}\label{eq:intr:SpectrumFormula(Mfr(X,U)f)} 
(C^*\Fr(-, X/U )_f, C^*\Fr(-,X/U\wedge S^1)_f, \dots C^*\Fr(-,X/U\wedge S^i)_f\dots),
\end{equation}
where $(-)_f$ denotes the fibrant replacement within the (level) injective local model structure. 
The result of \cite{GNP_FrMotiveRelSphere} 
covers the case of 
sheaves 
$( \A^n/ (\A^n-0)) \times X$, $X\in \Sm_k$.

As a consequence 
this proves the results of \cite[th. 4.1, 11.1, 11.7]{GP_MFrAlgVar} for the case of affine pairs.
\begin{theorem}[corollares \ref{cor:MPmotresolution}, \ref{cor:MGfrresolution}, \ref{cor:MfrcomputinfloopSH^S1}]\label{th:intr:MPMfrmotresolutions}
For a smooth affine scheme $X$ over an infinite perfect field $k$, and an open subscheme $U\subset X$,
the following holds:

1) The canonical morphism of $\PP^1$-spectra of pointed simplicial sheaves
\[\Sigma^\infty_{\PP^1}(X/U)\to \MP(X,U),\]
where the spectrum $\MP(X,U)$ is given by
\begin{equation}\label{eq:intr:MP}(C^*\Fr(-,X/U), C^*\Fr(-,X/U\wedge T), \dots C^*\Fr(-,X/U\wedge T^{\wedge i})),\end{equation}
where $T=(\A^1,\Gm)$,
is a stable motivic weak equivalence in $\mathbf{SH}(k)$,
and the Nisnevich local fibrant replacement $\MP(X,U)_f$ is a motivically fibrant $\Omega_{\PP^1}$-spectrum in positive degrees.

2) 
The $S^1$-spectrum of pointed simplicial sheaves $\Mfr(X,U)_f$ given by \eqref{eq:intr:SpectrumFormula(Mfr(X,U)f)}
is a motivically fibrant $\Omega$-spectrum in positive degrees
and has the homotopy type of the spectrum $\Omega^\infty_{\mathbb G_m}\Sigma^\infty_{\mathbb G_m}\Sigma^\infty_{S^1}(X/U)$ in $\mathbf{SH}_{S^1}(k)$.

3)
Let $\MGfr(X,U)_f$ be a $(S^1,\Gm^{\wedge 1})$-bi-spectrum given by
\begin{equation}\label{eq:intr:MGfr(X,U)f}(\Mfr(X,U)_f,\Mfr((X,U)\wedge \Gm^{\wedge 1})_f ,\dots \Mfr((X,U)\wedge \Gm^{\wedge i} )_f\dots ).\end{equation}
Then the canonical morphism of bi-spectra \[\Sigma^\infty_{\Gm}\Sigma^\infty_{S^1}(X/U)\to \MGfr(X,U)_f\] is a stable motivic weak equivalence, and $\MGfr(X,U)_f$ is a motivically fibrant $\Omega$-bi-spectrum in $S^1$-positive degrees.
\end{theorem}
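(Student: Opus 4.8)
The plan is to deduce all three parts from a single new input --- a relative \emph{Cone Theorem} for the affine pair $(X,U)$, generalising \cite{GNP_FrMotiveRelSphere} from relative motivic spheres $(\A^l\times X,(\A^l-0)\times X)$ to an arbitrary open subscheme $U$ of a smooth affine $X$ --- and then to run the cancellation and reconstruction machinery of \cite{AGP-FrCanc}, \cite{GP-HIVth}, \cite{GP_MFrAlgVar} exactly as in the absolute case of \cite[th. 4.1, 11.1, 11.7]{GP_MFrAlgVar}. The heart of the argument is part (2): to show that the $S^1$-spectrum $\Mfr(X,U)=(C^*\Fr(-,X/U),\,C^*\Fr(-,X/U\wedge S^1),\dots)$, after level-wise injective-local fibrant replacement, is a motivically fibrant $\Omega_{S^1}$-spectrum in positive degrees realising $\Omega^\infty_{\Gm}\Sigma^\infty_{\Gm}\Sigma^\infty_{S^1}(X/U)$; parts (1) and (3) then follow formally by smashing with $T$ and with $\Gm^{\wedge 1}$.

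First I would establish the geometric input: that after applying $C^*$ and passing to Nisnevich-local replacements, the pointed radditive framed presheaf $\Fr(-,X/U)$ fits into a cofiber sequence $C^*\Fr(-,U)\to C^*\Fr(-,X)\to C^*\Fr(-,X/U)$, that each $C^*\Fr(-,X/U\wedge S^i)$ is strictly homotopy invariant, and that it satisfies Nisnevich excision together with the framed additivity property. The hypothesis ``$X$ affine'' is used exactly here: one needs a moving lemma deforming, up to $\A^1$-homotopy, any framed correspondence from a smooth local henselian scheme into $X/U$ so that its support interacts in a controlled way with the closed complement $X\setminus U$; affineness provides the room for the linear-projection and Jouanolou-type deformations that make this work, mirroring the support-moving arguments of \cite{GNP_FrMotiveRelSphere} in the special case treated there.

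Granting this input, part (2) follows because the proofs of \cite[ths. 10.1(2), 7.6]{GP_MFrAlgVar} use only homotopy invariance of $C^*\Fr$, Nisnevich excision, and framed additivity, all now available for $X/U$; hence $\Mfr(X,U)_f$ is a motivically fibrant $\Omega_{S^1}$-spectrum in positive degrees, and its homotopy type is identified as in \cite[theorem 11.7]{GP_MFrAlgVar} using the cofiber sequence above together with the absolute results for $X$ and $U$. For part (1), the $T$-spectrum $\MP(X,U)$ has $i$-th term $C^*\Fr(-,X/U\wedge T^{\wedge i})$ with $T\simeq \Gm^{\wedge 1}\wedge S^1\simeq \PP^1$ in $\mathbf{H}_\bullet(k)$; each adjoint structure map is a motivic equivalence by the cancellation theorem \cite{AGP-FrCanc} applied to the pair $(X,U)\wedge T^{\wedge i}$, so $\MP(X,U)_f$ is an $\Omega_{\PP^1}$-spectrum in positive degrees and the canonical map $\Sigma^\infty_{\PP^1}(X/U)\to\MP(X,U)$ is a stable motivic weak equivalence by the framed-bispectra reconstruction of $\mathbf{SH}(k)$ in \cite{GP_MFrAlgVar}. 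For part (3), the $(S^1,\Gm^{\wedge 1})$-bispectrum $\MGfr(X,U)_f$ has $i$-th $\Gm$-layer $\Mfr((X,U)\wedge\Gm^{\wedge i})_f$, which is a motivically fibrant $\Omega_{S^1}$-spectrum in positive degrees by part (2) applied to the pair smashed with $\Gm^{\wedge i}$, while the $\Gm$-structure maps are motivic equivalences again by cancellation; hence the comparison $\Sigma^\infty_{\Gm}\Sigma^\infty_{S^1}(X/U)\to\MGfr(X,U)_f$ is a stable motivic weak equivalence.

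The \textbf{main obstacle} is the relative moving lemma of the first step: controlling the supports of framed correspondences into an open pair $(X,U)$ with $X$ affine, so as to realise the cofiber sequence and strict homotopy invariance at the level of $C^*\Fr$. This is the only genuinely new geometric content --- it is where affineness is indispensable --- and everything downstream (parts (1)--(3) and the passage to bispectra) is a mechanical application of the Garkusha--Panin--Neshitov--Ananyevskiy framed-motives formalism once this input is secured.
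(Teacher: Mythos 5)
Your high-level plan matches the paper's: the only genuinely new geometric input is a relative moving lemma for framed correspondences into a smooth affine pair $(X,U)$, and everything else is pushed through the Garkusha--Panin machinery for the absolute case. That said, the route you sketch differs from the paper's in two respects worth noting.

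First, the paper does not re-verify the axioms (strict homotopy invariance, Nisnevich excision, framed additivity) directly for $C^*\Fr(-,X/U\wedge S^i)$. Instead it proves a single transfer statement, the Cone Theorem (Theorem~\ref{th:ConeTh}): the map $\Fr(-,X//U)\to\Fr(-,X/U)$ from the framed presheaf of the \emph{simplicial cone} $X//U$ to that of the quotient sheaf $X/U$ is a motivic equivalence. Since $X//U$ is built levelwise from honest smooth schemes, all of the absolute results of \cite{GP_MFrAlgVar,AGP-FrCanc} already apply to $\Fr(-,X//U)$, so the Cone Theorem lets one transport them verbatim. Your ``cofiber sequence'' $\Fr(-,U)\to\Fr(-,X)\to\Fr(-,X/U)$ being motivically exact is essentially equivalent to the Cone Theorem, so the ideas are the same; but proposing to re-run the proofs of \cite[ths.~7.6, 10.1(2), 11.7]{GP_MFrAlgVar} for $\Fr(-,X/U)$ is significantly more work than the paper's transfer, and is not what the paper does. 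The Cone Theorem itself factors as a Nisnevich-local equivalence $\Fr(X//U)\simeq\Frqf(X/U)$ (through the quasi-finite subsheaf, using a henselian splitting argument) followed by an $\A^1$-equivalence $\Frqf(X/U)\to\Fr(X/U)$ given by the moving lemma; your description of the moving step (``linear-projection and Jouanolou-type deformations'', ``controlling supports'') is in the right spirit but imprecise --- the point is to deform the framing $\varphi$ by sections of $\mathcal O(d)$ on $\PP^n\times X$ (ampleness uses $X$ affine, via Serre's theorem) so that $\varphi^{-1}(0)$ becomes quasi-finite over the base.

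Second, the logical order of the three parts is reversed: you make part~(2) the engine and claim (1) and (3) follow ``formally by smashing''. The paper goes the other way: it proves (1) and (3) by transferring the absolute GP statements for $X//U$ (this requires the Cone Theorem not only for $X/U$ but for products of pairs, Corollary~\ref{cor:ConeThPairProd}, to handle the terms $\Fr(X/U\wedge T^{\wedge i})$ and $\Fr(X/U\wedge\Gm^{\wedge j})$), and then deduces (2) as the zero $\Gm$-row of the bispectrum in~(3). Your ordering can also be made to work, but ``smashing with $\Gm^{\wedge 1}$'' to get~(3) from~(2) is not purely formal: the $\Gm$-structure maps being motivic equivalences still requires the cancellation theorem of \cite{AGP-FrCanc}, and one must again apply the Cone Theorem with the $\Gm^{\wedge j}$-smash incorporated. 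So there is no gap, but the proposal understates the amount of bookkeeping needed to pass between the various spectrum flavours ($T$-spectra, $\PP^{\wedge1}$-spectra, $(\A^1//\Gm)$-spectra, $(\Gm,S^1)$-bispectra), which the paper handles explicitly in Corollary~\ref{cor:MPmotresolution}.
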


By the definition 
it follows that $\Mfr(X/U)=\Mfr(X//U)$, where $X//U$
is the simplicial cone of the open immersion $j\colon U\hookrightarrow X$,
that is the colimit 
in the category of pointed simplicial schemes
\begin{equation}\label{eq:intr:SimplCone}
X//U=\colim(X \xleftarrow{j} U \xrightarrow{i_1} \Delta^1_s\times U\xleftarrow{i_0} U\rightarrow * )\in \Delta^\mathrm{op}\Sm_\bullet
,
\text{ if } U\neq\emptyset,
\end{equation} 
where
$\Delta^1_s$ is the simplicial interval,
$i_0,i_1\colon U\to \Delta^1_s\times U$ are the unit and zero faces,
and $*$ is a rational point, that is the base point of $X//U$.
So the mentioned Cone Theorem is equivalent to the level Nisnevich local equivalence 
\begin{equation}\label{eq:intr:ConeTh}
\Mfr(X//U)\simeq_{\mathrm{nis}} \Mfr(X,U) \in \Spec_{S^1}(\sShv_\bullet),
\end{equation}
where $\Mfr(X,U)$ is the motivic spectrum with terms $C^*\Fr(-,(X/U)\wedge S^i)$.

In distinct to 
\cite{GNP_FrMotiveRelSphere} our arguments for 
\eqref{eq:intr:ConeTh} 
are unstable, 
so we prove the 
equivalence \eqref{eq:intr:ConeTh} 
levelwise independently. 
\begin{theorem}[Cone Theorem, Theorem \ref{th:ConeTh}]\label{th:intr:ConeTh}
For a smooth affine pair $(X,U)$ over an infinite field $k$ 
let $X//U$ denote 
the simplicial cone of the morphism $U\to X$, see \eqref{eq:intr:SimplCone} for the case $U\neq \emptyset$.

Then the canonical morphism of simplicial pointed sheaves \[\epsilon\colon \Fr(-,X//U)\to \Fr(-,X/U),\]
is a motivic equivalence.
\end{theorem}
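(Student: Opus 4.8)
\medskip

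If $U=\emptyset$ the statement is trivial ($X//U=X_+=X/U$ and $\epsilon$ is the identity), so assume $U\neq\emptyset$. The plan is to show that $\epsilon$ becomes a Nisnevich-local weak equivalence after applying the Suslin complex $C^*$ — equivalently, that $C^*\epsilon$ induces an isomorphism on all Nisnevich sheaves of homotopy groups — since for framed-correspondence sheaves this is exactly being a motivic equivalence. I would argue with the explicit geometric descriptions of both sides. By Voevodsky's lemma \cite[lm.~3.5, prop.~3.2]{GP_MFrAlgVar}, a section of $\Fr_n(W,X/U)$ is given by a tuple $(Z,V,\phi,g)$ with $Z\subset\A^n_W$ finite over $W$, $V\to\A^n_W$ an étale neighbourhood of $Z$, $\phi\colon V\to\A^n$ with $Z=\phi^{-1}(0)$, and now $g\colon V\to X/U$; since $*\wedge(-)=*$ inside $X/U\wedge(\A^1/\Gm)^{\wedge n}$, such a correspondence is the base point on a neighbourhood of $Z\cap g^{-1}(U)$, so its effective support is the closed subset $Z_{\mathrm{rel}}:=Z\cap g^{-1}(X\setminus U)$. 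On the other side, a $p$-simplex of the simplicial scheme $X//U$ is, as a scheme, $X\sqcup\coprod_{p}U$ (one of the $p+2$ copies of $U$ being glued into $X$ along $j$, one collapsed to the base point), and a framed correspondence to a disjoint union splits according to the clopen pieces of the target; hence a $p$-simplex of $\Fr_n(W,X//U)$ is a tuple $(c;c_1,\dots,c_p)$ of level-$n$ framed correspondences with \emph{pairwise disjoint supports} in $\A^n_W$, where $c$ maps to $X$ and each $c_i$ maps to $U$, and $\epsilon$ sends it to the image of $c$ in $\Fr_n(W,X/U)$ (forgetting the $U$-parts, which become base-pointed and simplicially degenerate).

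\emph{Surjectivity of $C^*\epsilon$ on homotopy} is essentially formal. As $X_+\to X_+/U_+=X/U$ is an epimorphism of Nisnevich sheaves, any $g\colon V\to X/U$ lifts, over a suitable Nisnevich neighbourhood of the finite-over-base closed set $Z$, to a map $\widetilde g\colon V'\to X$; thus every section of $\Fr_n(-,X/U)$ is Nisnevich-locally the image under $\epsilon$ of a $0$-simplex of $\Fr_n(-,X//U)$, and running this argument fibrewise over $\Delta^\bullet\times(-)$ and over spheres gives surjectivity on all homotopy sheaves.

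\emph{Injectivity reduces to a moving lemma.} If $c,c'$ are framed correspondences to $X$ with the same image in $\Fr_n(-,X/U)$, they agree as correspondences to $X/U$; this forces their maps to $X$ to coincide along $Z_{\mathrm{rel}}$ and to differ, if at all, only over $Z\cap g^{-1}(U)$, where both land in $U$. If $Z_{\mathrm{rel}}$ happens to be a clopen part of $Z$, this exhibits $c$ and $c'$ as differing by a $U$-supported disjoint summand, hence as identified in $C^*\Fr(-,X//U)$, where such summands are coned off. So the whole question comes down to the following moving statement (together with its analogue for homotopies, i.e.\ for families over $\Delta^\bullet$): for a smooth affine pair $(X,U)$, every framed correspondence to $X$ over $W$ is, after a Nisnevich refinement of $W$, an increase of the level $n$, and an $\A^1$-homotopy inside $C^*\Fr(-,X//U)$, equivalent to one whose support decomposes as a disjoint union $Z'\sqcup Z''$ with $g(Z')\cap U=\emptyset$ and $g(Z'')\subset U$.

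\emph{This moving lemma is the main obstacle,} and it is exactly where the affineness of $X$ enters; it extends the origin-case moving lemma of \cite{GNP_FrMotiveRelSphere} — where the closed complement is a single point — to an arbitrary closed subscheme $X\setminus U$ of a smooth affine variety. Since $X$ is affine one may write $X\setminus U=V(f_1,\dots,f_r)$ with $f_i\in\mathcal O(X)$; then the function $F:=(g^*f_1)^2+\dots+(g^*f_r)^2$ on $V$ vanishes exactly on $g^{-1}(X\setminus U)$, so it is zero along $Z_{\mathrm{rel}}$ and invertible along the complementary closed set $Z\cap g^{-1}(U)$. Adding a suitable multiple of $F$ to an extra framing coordinate (after raising the level to make room) defines an $\A^1$-deformation of the framing that pushes $Z\cap g^{-1}(U)$ out of the zero scheme while keeping $Z_{\mathrm{rel}}$ fixed, so that in the limit $Z_{\mathrm{rel}}$ becomes clopen and the remaining $U$-supported part splits off; the parasitic new zeros created along the way are absorbed by the extra coordinates and by the linear and $\Gm$-rescaling homotopies of \cite{GNP_FrMotiveRelSphere}, while a closed embedding $X\hookrightarrow\A^N$ (again using affineness) supplies the ambient room. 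Carrying out the same deformations over $\Delta^\bullet$ handles homotopies, and assembling everything produces the bijections on the homotopy sheaves of the Suslin complexes, i.e.\ the asserted motivic equivalence. I expect that the uniform, $C^*$-compatible execution of this disentangling is the only genuinely difficult point; the preceding steps are bookkeeping with the geometric descriptions.
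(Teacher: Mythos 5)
Your broad strategy (analyse the sections of both sides geometrically, then reduce to a moving lemma exploiting affineness of $X$) is in the right spirit, but the specific reduction and the specific moving lemma you propose are not correct. The most serious gap is your claim that \emph{surjectivity is essentially formal}. For a section $(Z,\mathcal V,\varphi,g)$ of $\Fr_n(-,X/U)$, the map $g$ already lands in $X$, so there is nothing to lift; the obstruction is that a correspondence to $X$ requires the \emph{entire} zero locus $\varphi^{-1}(0)\cap\mathcal V$ to be finite over the base, whereas for a correspondence to $X/U$ only the closed piece $Z=(\varphi,g)^{-1}(0\times(X\setminus U))$ is finite. If $\varphi^{-1}(0)$ has a positive-dimensional component running through a closed point of $Z$, no Nisnevich shrinking of $\mathcal V$ around $Z$ can separate $Z$ from that component, so you cannot produce a preimage in $\Fr_n(-,X)$ even henselian-locally. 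This is exactly the point where a moving lemma is indispensable, not just for injectivity.

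The second problem is the moving lemma you formulate. You ask to homotope a correspondence to $X$ so that its support $Z$ decomposes as $Z'\sqcup Z''$ with $g(Z')\cap U=\emptyset$ and $g(Z'')\subset U$; over a henselian base $Z$ is a disjoint union of local schemes, and a local component whose closed point lies over $X\setminus U$ but which has a non-closed point over $U$ is connected and cannot be split by any deformation. Moreover the deformation you sketch, replacing the extra framing coordinate $t_{n+1}$ by $t_{n+1}-\lambda F$ with $F=\sum(g^*f_i)^2$, only relocates the support along the graph $t=\lambda F$; restricting back to $t=0$ reproduces the same $Z_{\mathrm{rel}}$ and does not alter its topology inside $\varphi^{-1}(0)$ (and the square trick also degenerates in characteristic $2$). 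The paper's moving lemma has a different target altogether: it deforms $\varphi$ (by generic sections of $\mathcal O(d)$ on $\PP^n\times X$, ample because $X$ is affine, with an incidence/dimension-count argument using the $m$-fold fibre power $S^m_{U\times\Gamma_d}$) so that $\varphi^{-1}(0)$ becomes \emph{quasi-finite}. Once that is achieved, the Nisnevich-local step is handled not by another moving lemma but by the purely local-algebraic fact (Lemma~\ref{lm:qfhenseliansplit}) that a quasi-finite scheme over a henselian base splits canonically into a finite clopen part and a part with empty closed fibre; this is what produces, henselian-locally, a preimage in $\Fr_n(-,X)/\Fr_n(-,U)$, and it is organised in the paper via the intermediate sheaf $\Frqf$ and the two-step factorisation $\Fr(X//U)\to\Frqf(X/U)\to\Fr(X/U)$ (Nisnevich-local equivalence, then $\A^1$-equivalence). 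Your proposal skips the intermediate object, misattributes the role of the moving lemma, and sketches a deformation that would not accomplish what it is asked to do.
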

\begin{remark}
Note that 
the pointed simplicial sheaf $\Fr(-,X//U)$ is simplicially equivalent to $\Fr(-, X )/\Fr(-, U )$, 
and
the morphism \[\Fr(-, X )/\Fr(-, U )\to \Fr(-, X/U )\]
is not a simplicial equivalence, but is a motivic one by the above. 
\end{remark}
The result is given by the composition of motivic equivalences 
\begin{equation}\label{eq:intr:FrX//YFrqfFr}\Fr(-,X//U) \to \Frqf(-,X/U) \to \Fr(-,X/U),\end{equation}
where the first morphism is a Nisnevich local equivalence, the second one is an $\A^1$-homotopy-equivalence. 
The mid-term $\Frqf$ is the subsheaf of $\Fr$
given by
so-called quasi-finite framed correspondences, introduced in \cite{GNP_FrMotiveRelSphere}, see definition \ref{sect:QuasiFiniteCor} in section \ref{def:Frqf} in our text.

The theory of framed motives was revisited by the team of five authors EHKSY in \cite{ElHoKhSoYa-MotDeloop} with the technique of $\infty$-categories;
 this allowed to obtain clear $\infty$-categorical picture of the theory and obtain the universal properties of the constructed categories. 

Here we follow the original approach \cite{GP_MFrAlgVar},
since 
it is more explicit geometrical, and closure to the precise computations.
The main content and novelty of the present work relates to the geometrical arguments in the proof of the moving lemma, section  \ref{sect:MovingLemma}, providing the second equivalence  of \eqref{eq:intr:FrX//YFrqfFr}.

In the same time let us point that the definition of the tangentially framed correspondences introduced in \cite{ElHoKhSoYa-MotDeloop} and its extend to the case of pairs $(X,U)$
is useful for the proof of proposition \ref{prop:FrX/FrUeqnisFrPair} and corollary \ref{cor:FrPairFreqNisEq},
because the correspondences of this type are $\infty$-commutative monoids already on the simplicial level (with out motivic localisation).
Let us point that 
originally in \cite{GNP_FrMotiveRelSphere}
the first equivalence in \eqref{eq:intr:FrX//YFrqfFr} was proven for linear sheaves $\ZF$ and $\ZF^\qf$ instead of $\Fr$ and $\Frqf$, because the linear ones are commutative monoids, but $\Fr$ are $\Frqf$ are not.


\subsection{Remarks on the generality of the base filed.} 



Originally the theory of framed motives \cite{GP_MFrAlgVar} was written with the exceptional case of $\mathrm{char}\, k=2$, 
but was extended to this case by \cite{DP-Surjetex}. 
That is way we 
use the results of the theory \cite{GP_MFrAlgVar} in all characteristics.

The theory of famed motives was extended to the case of finite fields in \cite{DK18} and \cite{ElHoKhSoYa-MotDeloop}.
The same arguments extend our result for the case of finite fields as well, but we don't touch this here.


\subsection{Acknowledgement}
The research is supported by the Russian Science Foundation grant 19-71-30002.
Acknowledgement to I.~Panin for that he encouraged me to work on this project and for helpful discussions.

\subsection{Notation.}
$\mathrm{Sm}_k$ denotes the category of smooth separated schemes of finite type over the base field $k$. 
$\mathrm{AffSm}_k\subset \Sm_k$ denotes the subcategory of affine smooth schemes.

For a scheme $X$ and a vector of regular functions $\varphi=(\varphi_i)$, $\varphi\in \mathcal O(X)$, we denote by $Z(\varphi)$ the common vanishing locus of functions $\varphi_i$.

For an $S$-scheme $X$ we write $\dim_S X=r$ iff the Krull dimension of $X\times_S \sigma$ is equal to $r$ for any point $\sigma\in S$.
We write $\dim_S X\geq r$ ($\dim_S X\leq r$) iff $\dim (X\times_S \sigma)\geq r$ ($\dim (X\times_S \sigma)\leq r$) for any $\sigma\in S$.
For a morphism of $S$-schemes $X\to Y$ we denote $\dim_S (X/Y) = \dim_S X - \dim_S Y$, and $\codim_S (X/Y) = \dim_S Y - \dim_S X$ if $X$ and $Y$ are of constant dimension over $S$.

\section{Framed correspondences and motives.}\label{sect:FrCorandMot}

In the section we recall some definitions and results from \cite{GP_MFrAlgVar} 
on the theory of framed correspondences and framed motives.

The aim of the theory 
is to make 
computations in the stable motivic homotopy category $\mathbf{SH}(k)$,
in particular to compute  hom-groups $[S, X/U]_{\mathbf{SH}(k)}$ for 
$S,X\in \Sm_k$ and an open subscheme $U\subset X$.
The fundamental Voevodksy's idea, the theory is based on, is to use for this aim the computation 
of hom-sets $[S, X/U]_{\Shv(k)}$ and 
\[[S\wedge (\PP^1/\infty)^{\wedge n}, X/U\wedge (\A^1/\Gm)^{\wedge n}]_{\Shv_\bullet(k)}\]
 in the categories of (pointed) Nisnevich sheaves $\Shv(k)$ and $\Shv_\bullet(k)$.
This computation can be done precisely and it is called as Voevodksy's lemma
\cite[lm. 3.5, prop. 3.2]{GP_MFrAlgVar}; the
the answer is called as the framed correspondences 
\cite[def. 2.5]{GP_MFrAlgVar}, and see def. \ref{def:FrCor} that follows.

At the end of the section we recall one of the main results of \cite{GP_MFrAlgVar} that gives the computation of stable motivically fibrant resolutions in positive degrees in $\mathbf{SH}(k)$ of $\Sigma_{\PP^1}^\infty X$, $X\in \Sm_k$. 

\begin{definition}\label{def:FrCor}
(i) For any schemes $S$, $X$ and closed $Y\subset X$ 
an \emph{explicit framed correspondence of a level $n$} from $S$ to the pair $(X,X-Y)$
is a set $(Z,\mathcal V,\varphi,g)$, where
$Z$ is reduced closed subscheme in $\A^n_S$ finite over $S$,
$(\mathcal V,Z)\to (\A^n_S,Z)$ is Nisnevich neighbourhood, 
$\varphi = (\varphi_i)_{i=1\dots n}\colon \mathcal V\to \A^n$ and $g\colon \mathcal V\to X$ are regular maps
such that 
$Z = (\varphi,g)^{-1}(0\times Y)$ is finite over $S$. 
So we get the diagram $$
\xymatrix{
    \A^n_S \ar[d] & \mathcal V \ar[l]_{e }\ar[r]^{(\varphi,g)} 
       & \A^n\times X\ar@{=}[r]&\A^n_X   \\
  S & Z\ar[l]_{f}\ar@{^(->}[u]\ar@{^(->}[ul]_{c} \ar[r] 
          & 0\times Y\ar@{=}[r]\ar@{^(->}[u]&0_Y\ar@{^(->}[u]
    } 
$$
where $f$ is finite, $e$ is etale, and $c$ is a closed embedding. %

(ii) Define the set $\Fr_n(S,X/(X-Y))$ of \emph{level $n$ framed correspondences}
as the \emph{set of equivalent classes} of explicit frame correspondences from $S$ to $(X,X-Y)$ under the equivalence relation given by shrinking of the neighbourhood $\mathcal V$ of $Z$.  
So 
$\varphi_1=(Z_1,\mathcal V_1,\varphi_1,g_1)$, $\varphi_2=(Z_2,\mathcal V_2,\varphi_2,g_2)$ 
are equivalent 
whenever $Z_1=Z_2=Z$ and there is an explicit framed correspondence $\varphi=(Z,\mathcal V,\varphi,g)$ with morphisms 
of Nisnevich neighbourhoods $w_i\colon \mathcal V\to \mathcal V_i$ for $i=1,2$
such that 
$w_i$ acts identically on $Z_i$,
$\varphi=w_i^*(\varphi_i)$, $g=w_i^*(g)$.

(iii) 
Define 
morphisms
\begin{gather*}\begin{array}{lll}
\Fr_n(S,X/(X-Y)) &\to& \Fr_{n+1}(S,X/(X-Y))\\
(Z,\mathcal V,\varphi,g) &\mapsto &
(Z^\prime,\mathcal V^\prime,\varphi^\prime,g^\prime), 
\end{array}\\
\begin{array}{c}
Z^\prime=Z\times 0\subset \A^{n+1}_S, \, \mathcal V^\prime=\mathcal V\times \A^1, 
\varphi_{n+1}=t_{n+1}, \, \varphi^\prime_i=\varphi_i\circ pr, \,  g^\prime=g\circ pr 
\end{array}\end{gather*}
where $t_{n+1}$ is the last coordinate function on $\A^{n+1}_S$, and $pr\colon \mathcal V\times \A^1\to \mathcal V$ is the canonical projection.

Define $\Fr(S,X/(X-Y))=\varinjlim\limits_n \Fr_n(S,X/(X-Y)),$

Define $Fr_n(S,X)=Fr_n(S,X/\emptyset)$, $Fr(S,X)=Fr(S,X/\emptyset)$.
\end{definition}

Define a category of \emph{smooth open pairs} $\Sm^\mathrm{pair}_k$ 
with objects being pairs $(X,U)$ given by a smooth scheme $X$ and open subscheme $U\subset X$, 
and with morphisms $(X,U)\to (X^\prime,U^\prime)$ given by regular maps
$f\colon X\to X^\prime$, 
$f^{-1}(U^\prime)\supset U$.
We denote an object $(X,U)\in\Sm^\mathrm{pair}_k$ also as $X/U$. 

\begin{definition}\label{def:Fr(X/U)}
For a smooth pair $(X,U)\in \Sm^\mathrm{pair}_k$ we
denote by $\Fr(X/U)$ 
the pointed sheaf $\Fr(- , X/U )$ 
pointed at the framed correspondence with empty support.
We consider $\Fr(X/U)$ as a pointed simplicial sheaf constant in the simplicial direction. 

\end{definition}
\begin{definition}
Define the functor $C^*\colon \sShv_\bullet\to \sShv_\bullet\colon F\mapsto F(-\times \Delta^\bullet)$,
where the right side is considered as the simplicial sheaf in view of the totalisation functor form the category of bi-simplicial sheaves to $\sShv_\bullet$.
Here $\Delta^\bullet$ is the standard affine co-simplicial scheme with $\Delta^n=\{(x_0,\dots x_n)\in \A^{n+1}| x_1+\dots x_n=1\}$. 
\end{definition}

Denote by $T$ the pair $(\A^1,\Gm)\in \Sm^\mathrm{pair}_k$. 
Define the smash-product functor 
\begin{equation}\label{eq:(X1,U1)wedge(X2,U2)}\begin{array}{llll}
\wedge\colon& \Sm^\mathrm{pair}_k\times \Sm^\mathrm{pair}_k &\to& \Sm^\mathrm{pair}_k \\
& ( (X_1,U_1) , (X_2,U_2) ) &\mapsto& (X_1\times X_2, X_1\times U_2\cup X_2\times U_1).
\end{array}\end{equation}
We write $T^n$ for $T^{\wedge n}$, so
$T^n = (\A^n,\A^n-0)\in \Sm^\mathrm{pair}_k$. 
\begin{definition}\label{def:MPfrMP}
Define the $\PP^{\wedge 1}$-spectra 
\begin{gather}
\MPfr(X,U) = ( \Fr(X/U), \Fr( (X/U) \wedge T), \dots \Fr( (X/U) \wedge T^i), \dots ) \label{eq:sec1:MPfr}
\\
\MP(X,U) = C^*(\MPfr(X,U)), 
\end{gather}
see definition \ref{def:MTfr} in Appendix for the structure maps of the spectra $\MPfr(X,U)$.
Define $\MPfr(X)=\MPfr(X,\emptyset)$, $\MP(X)=\MP(X,\emptyset)$.
\end{definition}
\begin{remark}
Writing $(X/U) \wedge T$ 
in \eqref{eq:sec1:MPfr} we mean $(X,U)\wedge T$ 
in sense of \eqref{eq:(X1,U1)wedge(X2,U2)}.
\end{remark}

Denote by $(-)_f $ the  the NIsnevich local resolution endo-functor on $\sShv_\bullet$.
Now we can formulate one of the main results of \cite{GP_MFrAlgVar}.
\begin{theorem}[Theorem 4.1 \cite{GP_MFrAlgVar}]
Let $X$ be a smooth scheme over a infinite perfect field $k$.
Then the canonical morphism 
\[\Sigma^\infty_{\PP^1}(X)\to \MP(X)_f\]
is stable motivic weak equivalence and the right side is a motivically fibrant $\Omega_{\PP^1}$-spectrum in positive degrees.
\end{theorem}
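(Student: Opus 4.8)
The plan is to reduce the statement to the computation of the functor $\Omega^\infty_{\Gm}\Sigma^\infty_{\Gm}\Sigma^\infty_{S^1}$ recalled above, packaged as $\Mfr$, together with the cancellation/delooping theorems that let one $\PP^1$-deloop. First I would recall that, by the reconstruction of \eqref{eq:intr:OmGSigGSigS} as $\Mfr$, the level injective local fibrant replacement $\Mfr(X)_f = (C^*\Fr(-,X\wedge S^\bullet))_f$ is a motivically fibrant $\Omega_{S^1}$-spectrum in positive degrees whose homotopy type in $\mathbf{SH}_{S^1}(k)$ is that of $\Omega^\infty_{\Gm}\Sigma^\infty_{\Gm}\Sigma^\infty_{S^1}(X_+)$. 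This identifies the zeroth space of the $\PP^1$-loop spectrum $\Omega^\infty_{\PP^1}\Sigma^\infty_{\PP^1}(X_+)$, up to motivic equivalence, with $C^*\Fr(-,X)_f$. The key input here is the cancellation theorem for framed correspondences \cite{AGP-FrCanc}, which shows that smashing with $\Gm$ (equivalently, with $T=(\A^1,\Gm)$) is invertible after motivic localization on the category of framed motives.

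The second step is to upgrade this $S^1$-level statement to a genuine statement about the $\PP^1$-spectrum $\MP(X)$. I would show that, for each $i\geq 0$, the map $\Fr((X)\wedge T^{\wedge i}) \to \Omega_T \Fr((X)\wedge T^{\wedge i+1})$ induced by the structure maps becomes, after applying $C^*$ and the Nisnevich-local fibrant replacement, a motivic weak equivalence; this is exactly the $\PP^1=T$-delooping statement, and it follows by combining the $S^1$-fibrancy of $\Mfr((X)\wedge T^{\wedge i})_f$ with the Garkusha--Panin identification of $\Omega_{\Gm}$ on the framed motive (again cancellation, now in the $(\Gm,S^1)$-bispectrum form). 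Since each term $C^*\Fr(-,X\wedge T^{\wedge i})_f$ is already motivically fibrant in the $S^1$-direction in positive degrees, and the $\Gm$-structure maps become equivalences, the resulting $\PP^1$-spectrum $\MP(X)_f$ is a motivically fibrant $\Omega_{\PP^1}$-spectrum in positive degrees. The canonical map $\Sigma^\infty_{\PP^1}(X_+)\to \MP(X)_f$ is a stable motivic equivalence because it is so on each level in the appropriate range, using that $\Fr_0(-,Y)$ contains $Y$ as the "trivial" correspondences and the stabilized comparison map is built from the unit of the framed-motive construction.

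The main obstacle I expect is the interplay between the two fibrant replacements and the two stabilizations: one must be careful that applying $C^*$, then Nisnevich-localizing, then $T$-looping all commute up to motivic equivalence, and that the positivity hypothesis ("in positive degrees") is genuinely needed — the zeroth space is only the group-completion / motivic localization of $C^*\Fr(-,X)$, not $C^*\Fr(-,X)_f$ on the nose, because $\Fr$ is an $E_\infty$-monoid only after motivic localization (this is precisely the subtlety flagged in the remark after Theorem \ref{th:intr:ConeTh}). Managing this requires the additivity and homotopy-invariance theorems for $\Fr$ on affine schemes and the strict $\A^1$-invariance / Nisnevich-excision results of \cite{GP-HIVth}, which guarantee that the presheaves $\underline{\pi}_n C^*\Fr(-,X\wedge T^{\wedge i})$ are already strictly $\A^1$-invariant Nisnevich sheaves, so that the local and motivic fibrant replacements agree on them.
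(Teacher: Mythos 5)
This statement is a result the paper \emph{recalls} from \cite{GP_MFrAlgVar}; the paper cites it verbatim and offers no proof of its own, so there is strictly speaking nothing in this paper to compare your attempt against. That said, your sketch deserves comment on its own merits, because it would be used in the same way later (compare the proof of Corollary~\ref{cor:MPmotresolution}, which is the version of this statement for a pair $(X,U)$ and reduces to \cite[Theorem~10.1]{GP_MFrAlgVar} together with the chain of spectrum-model equivalences).

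Your outline correctly identifies the main inputs — the $S^1$-level delooping ($\Mfr(X)_f$ is a positively fibrant $\Omega_{S^1}$-spectrum), the framed cancellation theorem \cite{AGP-FrCanc} to invert $\Omega_{\Gm}$, and the strict $\A^1$-invariance/Nisnevich excision results \cite{GP-HIVth} — and the positivity caveat you raise is exactly the right thing to flag. The genuine gap is in your final step, the claim that $\Sigma^\infty_{\PP^1}(X_+)\to \MP(X)_f$ is a stable motivic equivalence ``because it is so on each level in the appropriate range, using that $\Fr_0(-,Y)$ contains $Y$ as the trivial correspondences.'' This is not an argument: the map is not a levelwise equivalence, and the mere inclusion of $Y$ into $\Fr_0(-,Y)$ gives only the unit, not its invertibility after stabilization. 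What is actually needed here is the Cone Theorem of \cite{GNP_FrMotiveRelSphere} for the relative spheres $X\times(\A^n,\A^n-0)$, which identifies the framed motive of $X\wedge T^n$ (i.e.\ $C^*\Fr(-,X\wedge T^n)_f$) with the framed motive of the honest quotient presheaf; only via that identification do the comparison map and its $T$-suspensions become stable equivalences. This is the precise analogue, for the closed pair $(\A^n,\A^n-0)$, of the Cone Theorem the present paper proves for open affine pairs $(X,U)$, and it is the ingredient your sketch leaves implicit. Without it the delooping argument only shows that $\MP(X)_f$ is a positively fibrant $\Omega_{\PP^1}$-spectrum; it does not yet tie that spectrum to $\Sigma^\infty_{\PP^1}(X_+)$.

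One further remark on logical order: you derive the $\PP^1$-statement from the $S^1$/$\Gm$-bispectrum statement (Theorem~11.7 of \cite{GP_MFrAlgVar}). That is a valid route, but note that in \cite{GP_MFrAlgVar} both of these are corollaries of a common Theorem~10.1, rather than one being derived from the other; if you present the argument the way you do, you should make explicit that Theorem~11.7 is independently available (i.e.\ its proof does not already pass through Theorem~4.1), otherwise the reduction is circular.
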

\begin{remark}
In other words the last statement of the theorem above means that
for each $l>0$ the pointed sheaf $C^*\Fr(X\wedge T^l)_f$ is motivically fibrant within the Morel-Voevodsky model structure on $\sShv_\bullet$, and the canonical morphism
\[C^*\Fr(X\wedge T^l)_f\to \Omega_{\PP^1}(C^*\Fr(X\wedge T^{l+1})_f)
,\]
see def. \ref{def:MPfr}, is a schemewise simplicial weak equivalence.
\end{remark}

\section{Quasi-finite framed correspondences}\label{sect:QuasiFiniteCor}

In the section we recall the definition of quasi-finite framed correspondences $\Frqf(-,X/U)$ introduced in \cite{GNP_FrMotiveRelSphere},
that is a useful tool in studying of framed correspondences. 

Main result of the section is the Nisnevich local equivalence $\Fr(X//U)\simeq\Frqf(X/U)$, corollary \ref{cor:FrPairFreqNisEq}, where $X//U$ is simplicial cone.
This generalises \cite[corollary 5.3]{GNP_FrMotiveRelSphere}.
We apply this to prove the Mayer-Vietoris property for the framed correspondences functor $\Fr\colon \Sm_k\to \Shv_\bullet$, corollary \ref{cor:MayerVietoris}, and see defs. \ref{def:FrCor}-\ref{def:Fr(X/U)} for the functor $\Fr$.

\begin{definition}\label{def:Frqf}
Let $\Fr_n^\mathrm{qf}(-, X/(X-Y))$ be a subpresheaf of $\Fr_n(-, X/(X-Y) )$ that consists of
framed correspondences $a=(\mathcal V,Z,\varphi,g)$ such that
$\varphi^{-1}(0)$ is quasi-finite over $U$. Set $\Frqf(-,X/(X-Y))=\varinjlim_n \Frqf_n(-,X/(X-Y))$.
\end{definition}

\begin{proposition}\label{prop:FrX/FrUeqnisFrPair}
Let $X\in \Sm_k$ and $U\subset X$ be an open subscheme.
Then the natural morphism of pointed Nisnevich sheaves
\[\Fr_n(X)/\Fr_n(U)\to \Fr^{\qf}_n(X/U)\]
is a Nisnevich local equivalence.
\end{proposition}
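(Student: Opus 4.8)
The plan is to check that the map induces an isomorphism on stalks at every Henselian local scheme, which is the standard way to verify a Nisnevich local equivalence of sheaves (of sets, or pointed sets). So let $S=\operatorname{Spec}\mathcal O_{S,s}^h$ be a Henselian local scheme; I want to show
\[
\bigl(\Fr_n(X)/\Fr_n(U)\bigr)(S)\longrightarrow \Fr_n^{\qf}(X/U)(S)
\]
is a bijection of pointed sets. Unwinding the left-hand side, an element is represented by a level-$n$ framed correspondence $a=(Z,\mathcal V,\varphi,g)$ from $S$ to $X$, modulo those that factor through $U$, i.e.\ those for which $g(Z)\subset U$ (equivalently, one may replace $Z$ by a clopen piece landing in $U$ and discard it). The map sends such an $a$ to the correspondence with the same data but now viewed as a correspondence to the pair, whose support is $Z\cap g^{-1}(Y)$ where $Y=X\setminus U$; the quasi-finiteness condition $\varphi^{-1}(0)$ quasi-finite over $U$ is automatic here because $\varphi^{-1}(0)\supset Z$ is finite over $S$ and hence finite over $U$ on the relevant locus. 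Thus the map is clearly well defined; the content is surjectivity and injectivity on $S$-sections.

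For surjectivity: start from a quasi-finite framed correspondence $b=(Z',\mathcal V',\varphi',g')$ to the pair $(X,U)$ over the Henselian local $S$. Here $Z'=(\varphi',g')^{-1}(0\times Y)$ is finite over $S$, while the full zero locus $\varphi'^{-1}(0)\subset\mathcal V'$ is only quasi-finite over $U$. The key point is that over a Henselian local base the finite scheme $Z'$ splits off: I want to enlarge the support, replacing $Z'$ by the whole of $\varphi'^{-1}(0)$ (or at least a clopen neighbourhood of $Z'$ inside it that is finite over $S$), so that the correspondence becomes an honest framed correspondence to $X$ in the unpointed sense. Concretely, since $\varphi'^{-1}(0)$ is quasi-finite over $U$ and $Z'\subset\varphi'^{-1}(0)$ is finite over $S$, one uses that $Z'$ is, after shrinking $\mathcal V'$, a connected component (clopen subscheme) of $\varphi'^{-1}(0)$ — this is where the Henselian/local hypothesis and Zariski's main theorem enter, letting one separate the finite-over-$S$ part from the rest. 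After this shrinking, $(Z',\mathcal V',\varphi',g')$ is a level-$n$ framed correspondence from $S$ to $X$ in the sense of Definition \ref{def:FrCor}(i), whose class in $\Fr_n(X)(S)/\Fr_n(U)(S)$ maps to $b$. For injectivity one argues similarly: given two framed correspondences $a_1,a_2$ to $X$ over $S$ with the same image in $\Fr_n^{\qf}(X/U)(S)$, one produces, again using that finite-over-$S$ closed pieces over a Henselian local base are clopen, a common refinement exhibiting that $a_1$ and $a_2$ differ by a correspondence supported over $U$, hence are equal modulo $\Fr_n(U)(S)$.

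The main obstacle I anticipate is precisely the passage in surjectivity from the quasi-finite-over-$U$ zero locus $\varphi'^{-1}(0)$ to a clopen subscheme that is finite over $S$ and contains the support $Z'$: one must be careful that such a separation exists Nisnevich-locally (it does not in general Zariski-locally, which is why the statement is only a Nisnevich local equivalence and not an isomorphism of presheaves), and that the resulting shrinking of the étale neighbourhood $\mathcal V'$ is compatible with the equivalence relation in Definition \ref{def:FrCor}(ii). This is a mild application of the structure of quasi-finite morphisms over Henselian local rings (Zariski's main theorem / EGA IV 18.5.11), but it is the step that must be executed with care. The remaining verifications — that the map respects the base points (empty support) and the stabilisation maps of Definition \ref{def:FrCor}(iii), so that one may pass to the colimit in $n$ and conclude the statement for $\Fr$ and $\Frqf$ in Corollary \ref{cor:FrPairFreqNisEq} — are formal.
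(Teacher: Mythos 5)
Your plan is correct and follows essentially the same route as the paper: check bijectivity on Henselian local stalks, and use Zariski's main theorem over a Henselian local base to split the quasi-finite zero locus $\varphi^{-1}(0)$ as a finite piece $W_f\supset Z'$ disjoint-unioned with a complement missing the closed fibre, then shrink $\mathcal V'$ accordingly. The paper packages this as an explicit two-sided inverse $r$ (Lemma \ref{lm:qfhenseliansplit} for the splitting, Lemma \ref{lm:NeighbourhoodsFinsupports} for the compatibility of \'etale neighbourhoods of $Z'$ and $W_f$ that you flag as needing care), so the only difference from your proposal is bookkeeping, not substance.
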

\begin{proof}
To prove the claim 
we consider a henselian $k$-schemes $S$ and construct the inverse morphism for the canonical morphism of germs
\begin{equation}\label{eq:Xfr/Ufr->(X,U)frqf}\begin{array}{llll}
c\colon& \Fr(S,X)/\Fr(S,U) & \to & \Fr^\qf(S,X/U) \\
& (Z, V, \varphi, g) & \mapsto & (Z\times_{g,X,i} (X\setminus U), V, \varphi, g)
.\end{array}
\end{equation}

Let $(Z, V, \varphi, g)\in \Fr^\qf(S,X/U)$.
Then $g\colon V \to X$ and $Z=V\times_{(\varphi,g), X, i}(X\setminus U)$,
where $i\colon X\setminus U\to X$ is the canonical closed immersion.
Since $S$ is local henselian it follows by lemma \ref{lm:qfhenseliansplit} that 
$Z(\varphi) = W_f\amalg W_{qf}$, 
such that $Z\subset W_f$ and $W_f$ is finite over $S$.
Define the morphism of pointed sets
\begin{equation}\label{eq:Xfr/Ufr=(X,U)frqf}\begin{array}{llll}
r \colon & \Fr^\qf(S,X/U) & \rightarrow & \Fr(S,X)/\Fr(S,U) \\
& (Z, V, \varphi, g) &\mapsto & (W_f, V- W_{qf}, \varphi, g)
.\end{array}
\end{equation}
We need to check that the morphism $r$ is well defined. 
To do this we need to compare the set of etale neighbourhoods of the subscheme $Z$ and the subscheme $W_f$.
The sets are equal that follows form lemma \ref{lm:NeighbourhoodsFinsupports}. 

Now we see that
the composition $r\circ c$ is identity immediately by definitions.
The composition $c\circ r$ is identity since 
by definition of $\Fr^\qf$
the element given by 
$(Z, V, \varphi, g)$ 
is equal to the element given by 
$(Z, V- W_{qf}, \varphi, g)$.
\end{proof}
\begin{lemma}\label{lm:qfhenseliansplit}
Let $S$ be a henselian local scheme and $W$ be a quasi-finite scheme over $S$.
Then $W=W_f\amalg W_{qf}$, where $W_f$ is finite over $S$ and the closed fibre of $W_{qf}$ is empty.
\end{lemma}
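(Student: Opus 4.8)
The plan is to use the standard structure theory of quasi-finite schemes over a henselian local base, namely the local version of Zariski's Main Theorem. Let $s\in S$ denote the closed point. First I would invoke Zariski's Main Theorem (in the form EGA IV 8.12.6, or the henselian formulation in EGA IV 18.5.11) to factor the quasi-finite separated morphism $W\to S$ as an open immersion $W\hookrightarrow \overline{W}$ followed by a finite morphism $\overline{W}\to S$. Since $S$ is henselian local and $\overline{W}\to S$ is finite, the scheme $\overline{W}$ decomposes as a finite disjoint union $\overline{W}=\coprod_{j} \overline{W}_j$ of local henselian schemes, each $\overline{W}_j$ finite local over $S$ with closed point lying over $s$, together with the part of $\overline{W}$ whose fiber over $s$ is empty.

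Next I would examine which of the local pieces $\overline{W}_j$ survive in the open subscheme $W\subset\overline{W}$. For each $j$, the intersection $W\cap \overline{W}_j$ is an open subset of the local scheme $\overline{W}_j$; since $\overline{W}_j$ is local, any open subset either is all of $\overline{W}_j$ (if it contains the closed point) or misses the closed point of $\overline{W}_j$. Accordingly I set $W_f$ to be the union of those $\overline{W}_j$ that are entirely contained in $W$ — this is finite over $S$, being a finite disjoint union of schemes finite over $S$ — and $W_{qf}$ to be the complement $W\setminus W_f$. By construction $W_f$ is open and closed in $W$ (it is a union of connected components of $\overline{W}$ intersected with $W$), hence $W=W_f\amalg W_{qf}$ as schemes. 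It remains to check that the closed fiber $(W_{qf})_s$ is empty: any point of $W$ lying over $s$ lies in some $\overline{W}_j$ with nonempty fiber over $s$, i.e. in one of the local pieces; since that point specializes to the closed point of $\overline{W}_j$ and $W\cap\overline{W}_j$ is open, the whole of $\overline{W}_j$ lies in $W$, so the point lies in $W_f$. Hence no point of $W_{qf}$ lies over $s$, as claimed.

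The one subtlety worth being careful about is that the decomposition into $W_f$ and $W_{qf}$ must be a decomposition \emph{as schemes} (disjoint union of open-closed subschemes), not merely a set-theoretic partition; this is why it is important to obtain $W_f$ as a union of connected components of the compactification $\overline{W}$ rather than, say, as a scheme-theoretic closure or some reduced subscheme. The main obstacle, such as it is, is simply citing Zariski's Main Theorem in the correct form and checking the separatedness/finite-type hypotheses that make it applicable; everything after that is the elementary observation that open subsets of local schemes are dichotomous with respect to the closed point. In the intended application $W=Z(\varphi)$ sits inside a Nisnevich neighbourhood of affine space, so these hypotheses are automatic.
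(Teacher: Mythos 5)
Your argument is correct and follows the same route as the paper's: factor $W\hookrightarrow\overline W\to S$ via Zariski's Main Theorem with $\overline W$ finite, use the henselian hypothesis to split $\overline W$ into local pieces matching the clopen decomposition of the closed fibre, and then observe that the pieces meeting $W$ over the closed point are entirely contained in $W$. You are in fact slightly more careful than the paper's write-up on the last step (why $W_f\subset W$): the paper passes directly from the decomposition of $\overline W$ to the decomposition of $W$ without spelling out the dichotomy for open subsets of the local pieces, which is exactly the point you flag and resolve.
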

\begin{proof}
By Zariski's main \cite[Theorem~8.12.6]{GD67}
the morphism $W\to S$ can be passed throw
$W\to \overline W\to S$ with $\overline W$ being finite over $S$.
Then the closed fibre 
of $\overline W$ splits 
\[\overline W\times_S \sigma = W\times_S \sigma \amalg (\overline W\setminus W)\times_S \sigma.\]
Hence since $U$ is local henselian the scheme $\overline W$ splits into the union of clopen subschemes
\[\overline W = W_f \amalg \overline W_{qf}, \,W_f\times_S \sigma = W\times_S \sigma, \,\overline W_{qf}\times_S \sigma = (\overline W\setminus W)\times_S \sigma.\]
Thus we get the required splitting 
\[W = W_f\amalg W_qf, \,W_f\times_S \sigma = W, \,W_{qf}\times_S \sigma = \emptyset.\]
\end{proof}
\begin{lemma}\label{lm:NeighbourhoodsFinsupports}
Let $W\to S$ be a finite morphism, $S$ be a local henselian scheme, $\sigma\in S$ be the closed point.
Let $W^\prime\to W$ be en etale morphism such that $W^\prime\times_S \sigma \simeq W\times_S \sigma$.
Then $W^\prime\simeq W$.
\end{lemma}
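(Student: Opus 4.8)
The plan is to reduce to the case in which $W$ is local henselian and then to use the rigidity of étale morphisms over a henselian local base: such a morphism that is an isomorphism over the closed fibre is an isomorphism onto a clopen subscheme, whose complement has empty closed fibre and is therefore removed by the finiteness of the supports over $S$.

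\emph{Reduction.} Since $W\to S$ is finite and $S$ is local henselian, $W$ is a finite disjoint union $W=\coprod_i W_i$ of local henselian schemes, each $W_i$ finite over $S$ (a standard consequence of Hensel's lemma, cf.\ \cite{GD67}). The morphism $W'\to W$ and the given isomorphism $W'\times_S\sigma\xrightarrow{\ \sim\ }W\times_S\sigma$ both respect this decomposition, so it suffices to treat one $W_i$; replacing $W$ by $W_i$ we may assume $W$ is local henselian with closed point $w$. Write $W_0=W\times_S\sigma\hookrightarrow W$ for the closed fibre, a closed subscheme supported at $w$. The hypothesis then says exactly that the base change $W'\times_W W_0\to W_0$ of $W'\to W$ is an isomorphism.

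\emph{Lifting a section and splitting.} The pair $(W,W_0)$ is henselian, because $W$ is henselian local and the ideal of $W_0$ is contained in the maximal ideal. The inverse of the isomorphism $W'\times_W W_0\xrightarrow{\ \sim\ }W_0$ is a section of $W'\to W$ over $W_0$; since $W'\to W$ is étale and $(W,W_0)$ is a henselian pair, it lifts uniquely to a section $s\colon W\to W'$ of $W'\to W$. As $W'\to W$ is separated, $s$ is a closed immersion, and as it is a section of an étale (hence unramified) morphism it is an open immersion; thus $s(W)\subset W'$ is a clopen subscheme carried isomorphically onto $W$, and $W'=s(W)\amalg W''$ with $W''=W'\setminus s(W)$. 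Restricting to $W_0$ and using $W'\times_W W_0=W_0=s(W)\times_W W_0$ yields $W''\times_W W_0=\emptyset$, so $W''$ has empty closed fibre. In the setting where the lemma is used — Proposition \ref{prop:FrX/FrUeqnisFrPair}, where all supports are finite over $S$ — the scheme $W'$ is finite over $S$, hence $W''$ is finite over the local scheme $W$ with empty closed fibre and therefore empty by Nakayama. Consequently $s\colon W\xrightarrow{\ \sim\ }W'$, as claimed.

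The step I expect to be the crux is the lifting of the section over $W_0$ to a global section: this is precisely where the henselian pair property of $(W,W_0)$ is indispensable, and it is what lets one pass from ``$W'$ is isomorphic to $W$ over the closed fibre'' to ``$W'$ contains a clopen copy of $W$''. The only other delicate point is the removal of the complementary piece $W''$, which genuinely uses that $W'$ is finite over $S$ — without this $W''$ may be a nonempty scheme with empty closed fibre, such as the generic point of a henselian discrete valuation ring — so I would state this finiteness explicitly where the lemma is invoked. Everything else is formal.
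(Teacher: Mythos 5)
Your proof is correct and follows the same route the paper's one-line argument gestures at: finiteness of $W$ over the henselian local $S$ makes $W$ a finite disjoint union of henselian local schemes, and on each piece the henselian-pair property of $(W,\,W\times_S\sigma)$ lifts the given closed-fibre section to a global section of the \'etale map, exhibiting $W$ as a clopen subscheme of $W'$. You are also right to flag that this alone does not give $W'\simeq W$: the complementary clopen piece $W''\subset W'$ has empty closed fibre but need not be empty, so the lemma as literally stated is false, exactly as your henselian-DVR example shows. The paper's ``so the claim follows'' elides this. Either the statement should gain the hypothesis that $W'$ is finite over $S$ (whence $W''=\emptyset$ by Nakayama, as you argue), or its conclusion should be weakened to ``$W$ is a clopen subscheme of $W'$''; note that in Proposition~\ref{prop:FrX/FrUeqnisFrPair}, where the lemma is invoked, the relevant $W'$ is in general only quasi-finite over $S$, and \'etale neighbourhoods there are in any case compared only up to shrinking, so the weaker clopen formulation is both correct and sufficient for the intended application.
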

\begin{proof}
Since $S$ is local henselian and $W\to S$ is finite, it follows that $W$ is semilocal henselian. So the claim follows. 
\end{proof}

\begin{definition}\label{def:SimplCone}
Let $Y\hookrightarrow X$ be a morphism 
of smooth schemes. 
The \emph{simplicial cone} $X//Y$
of the morphism $Y\rightarrow X$ is a simplicial scheme defined as follows.

If $U=\emptyset$ then $X//Y=X$.
If $U\neq\emptyset$,
then $X//Y$ is the colimit of the diagram in the category of simplicial schemes
\begin{equation}\label{eq:SimplCone}
X//U=\colim(X \xleftarrow{j} U \xrightarrow{i_1} \Delta^1_s\times U\xleftarrow{i_0} U\rightarrow * )\in \Delta^\mathrm{op}\mathrm{Sm_k}
,\end{equation} 
where
$\Delta^1_s$ is the simplicial interval,
$i_0,i_1\colon U\to \Delta^1_s\times U$ are the unit and zero faces,
and $*$ is a rational point that is the base point of $X//U$.
\end{definition}

\begin{corollary}\label{cor:FrPairFreqNisEq}
Let $X\in \Sm_k$ and $U\subset X$ be an open subscheme.
Then the natural morphism of simplicial pointed sheaves 
\[\Fr_n(X//U)\to \Fr^{\qf}_n(X/U)\] is a Nisnevich local equivalence,
see def. \ref{def:SimplCone} 
and def. \ref{def:FrcalX/calU} for $\Fr(X//U)$.
\end{corollary}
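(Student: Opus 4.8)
The plan is to deduce Corollary \ref{cor:FrPairFreqNisEq} from Proposition \ref{prop:FrX/FrUeqnisFrPair} by identifying the homotopy type of $\Fr_n(X//U)$ as a simplicial sheaf. First I would recall that the simplicial cone $X//U$ of \eqref{eq:SimplCone} is, up to levelwise isomorphism, the mapping cone of $j\colon U\hookrightarrow X$ in pointed simplicial schemes, so that applying the functor $Y\mapsto \Fr_n(-,Y)$ levelwise and using that $\Fr_n$ preserves finite coproducts of schemes (disjoint unions of supports add), one gets that $\Fr_n(-,X//U)$ is the simplicial mapping cone of $\Fr_n(-,U)\to \Fr_n(-,X)$ in pointed simplicial sheaves. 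Since the mapping cone of a monomorphism of sheaves is simplicially equivalent to the cokernel, this yields a levelwise (schemewise) simplicial weak equivalence
\[
\Fr_n(X//U)\;\xrightarrow{\ \simeq\ }\;\Fr_n(X)/\Fr_n(U)
\]
as pointed simplicial sheaves; this is exactly the content of the remark following Theorem \ref{th:intr:ConeTh}, that $\Fr(-,X//U)\simeq \Fr(-,X)/\Fr(-,U)$ simplicially. A subtle point to check here is that the two inclusions $i_0,i_1$ and the collapse to the base point behave correctly sectionwise, i.e. that $\Fr_n(-,U)\to \Fr_n(-,X)$ really is a sectionwise monomorphism, which follows from the geometric description in Definition \ref{def:FrCor}: a framed correspondence landing in $X$ lands in $U$ iff its support does, and this condition is detected on supports.

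Next I would compose this equivalence with the Nisnevich local equivalence
\[
\Fr_n(X)/\Fr_n(U)\;\xrightarrow{\ \simeq_{\nis}\ }\;\Frqf_n(X/U)
\]
supplied by Proposition \ref{prop:FrX/FrUeqnisFrPair}, applied levelwise in the simplicial direction (a levelwise Nisnevich local equivalence of simplicial sheaves is a Nisnevich local equivalence). The composite is the canonical map $\Fr_n(X//U)\to \Frqf_n(X/U)$, and being the composite of a schemewise simplicial weak equivalence with a Nisnevich local equivalence, it is a Nisnevich local equivalence. I would also check that the map so produced agrees with the ``canonical'' map referred to in the statement, i.e. the one induced by the levelwise inclusion of supports together with $X//U\to X/U$; this is a diagram chase comparing the collapse map $X//U\to X/U$ with the cokernel projection, and is formal once the identification in the first paragraph is set up.

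The main obstacle I anticipate is the bookkeeping around the two distinct quotient constructions: $X/U$ (the Nisnevich sheaf quotient of representable sheaves, a non-simplicial object) versus $X//U$ (an honest simplicial scheme), and making precise that $\Fr_n(-,X//U)$ computed via the totalisation of the defining colimit diagram \eqref{eq:SimplCone} is simplicially equivalent to the naive cokernel presheaf $\Fr_n(-,X)/\Fr_n(-,U)$. One has to be a little careful because $\Fr_n(-,-)$ is not additive (it is only pointed and finite-coproduct-preserving on the scheme variable, not a monoid before group completion), so the standard ``cofiber sequence of a cofibration is the cokernel'' reasoning has to be carried out at the level of the specific cone diagram rather than invoked abstractly; but since the cone diagram \eqref{eq:SimplCone} is built only from the pushout-along-a-monomorphism pattern, and $\Fr_n$ sends the monomorphism $U\hookrightarrow X$ sectionwise to a monomorphism, the cone is levelwise a sheaf version of the reduced mapping cylinder quotient, and the contraction of the cylinder gives the desired equivalence. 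Once that identification is in hand, the corollary is immediate from Proposition \ref{prop:FrX/FrUeqnisFrPair}.
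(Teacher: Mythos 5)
Your overall strategy matches the paper's proof exactly: reduce to Proposition~\ref{prop:FrX/FrUeqnisFrPain} (the Nisnevich local equivalence $\Fr_n(X)/\Fr_n(U)\to\Frqf_n(X/U)$) by establishing the sectionwise simplicial equivalence $\Fr_n(X//U)\simeq\Fr_n(X)/\Fr_n(U)$ and composing. The paper's own proof is a one-line invocation of precisely these two facts. So the architecture is right.

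However, your justification of the simplicial equivalence $\Fr_n(X//U)\simeq\Fr_n(X)/\Fr_n(U)$ contains a genuine gap. You assert that ``$\Fr_n$ preserves finite coproducts of schemes'' and hence that $\Fr_n(-,X//U)$ is the simplicial mapping cone of $\Fr_n(-,U)\to\Fr_n(-,X)$ in pointed simplicial sheaves, to which you then apply the standard ``cone of a mono $\simeq$ cokernel'' argument. This identification is not correct. For a disjoint union $X_1\amalg X_2$ one has a canonical map $\Fr_n(S,X_1\amalg X_2)\to\Fr_n(S,X_1)\times\Fr_n(S,X_2)$, and this map is \emph{injective onto the subset of pairs with disjoint supports in $\A^n_S$}; it is neither a bijection onto the product nor a coproduct (wedge) of pointed sheaves. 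In particular $(X//U)_m\cong X\amalg U^{\amalg m}\amalg *$, so levelwise application of $\Fr_n$ yields a simplicial object whose $m$-th term sits inside $\Fr_n(X)\times\Fr_n(U)^m\times\Fr_n(*)$ (further quotiented by the basepoint $\Fr_n(*)$), not the wedge $\Fr_n(X)\vee\Fr_n(U)^{\vee m}$ of the standard pointed simplicial mapping cone. What one actually gets is a bar-construction-type object, and the comparison with the naive cokernel $\Fr_n(X)/\Fr_n(U)$ is a nontrivial fact that exploits the special $\Gamma$-space / $E_\infty$-monoid structure of $\Fr_n$ (equivalently, the disjoint-supports additivity you allude to in your parenthetical). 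This is exactly the subtlety the paper's introduction flags: GNP originally proved the analogous statement for the \emph{linearized} presheaves $\ZF$, which are honest commutative monoids so that the mapping-cone-equals-cokernel argument applies verbatim, and the author remarks that tangentially framed correspondences are ``useful'' here because they are $\infty$-commutative monoids already on the simplicial level. Your write-up elides this and treats $\Fr_n$ as if it preserved wedges, which it does not. The final composition with Proposition~\ref{prop:FrX/FrUeqnisFrPair} and the observation about the compatibility of the canonical maps are fine; the gap is localized in the mapping-cone identification.
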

\begin{proof}
The claim follows by proposition \ref{prop:FrX/FrUeqnisFrPair}
and the simplicial equivalence of pointed simplicial presheaves $\Fr_n(X//U)\simeq \Fr_n(X)/\Fr_n(U)$.
\end{proof}

\begin{corollary}\label{cor:MayerVietoris}
Let $X\in \Sm_k$, $U,V\subset X$ be open subschemes, $X=U\cup V$.
Then the canonical morphisms of pointed simplicial Nisnevich sheaves 
\[\Fr( V//(U\cap V))\to \Fr(X/U), \, \Fr( V \amalg^\sim_{(U\cap V)} U ) \to \Fr( U \cup V )\]
are Nisnevich local equivalences;
see 
def. \ref{def:hocoeq} and 
def. \ref{def:FrcalX/calU}
for $\Fr(V \amalg^\sim_{(U\cap V)} U)$. 
\end{corollary}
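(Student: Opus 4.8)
\textbf{Proof plan for Corollary \ref{cor:MayerVietoris}.}
The plan is to deduce both statements from Corollary \ref{cor:FrPairFreqNisEq} together with the Nisnevich-local nature of the functor $\Fr$. For the first equivalence, one applies Corollary \ref{cor:FrPairFreqNisEq} with $X$ replaced by the open subscheme $V$ and $U$ replaced by $U\cap V\subset V$: this gives a Nisnevich-local equivalence $\Fr(V//(U\cap V))\simeq_\nis \Fr^\qf(V/(U\cap V))$. So it remains to identify $\Fr(X/U)$ with $\Fr^\qf(V/(U\cap V))$ as pointed Nisnevich sheaves. Since $X=U\cup V$, the open immersion $V\hookrightarrow X$ induces an isomorphism of quotient sheaves $V/(U\cap V)\xrightarrow{\sim} X/U$ at the level of pointed Nisnevich sheaves (the difference $X\setminus U$ is contained in $V$, and Nisnevich-locally every point either lies in $U$ — where both quotients are trivial — or lies in $V$). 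Here one must check that the excision isomorphism $V/(U\cap V)\simeq X/U$ is compatible with the quasi-finiteness condition cutting out $\Frqf$ from $\Fr$; this is where the argument needs a little care, and it is the main obstacle: the condition in Definition \ref{def:Frqf} refers to quasi-finiteness of $\varphi^{-1}(0)$ over the base scheme ($U\cap V$ versus $U$), and one has to see that the support data, which live over $X\setminus U=V\setminus(U\cap V)$, make the two conditions coincide. I expect this to follow because the relevant closed subscheme $Z$ and the locus $\varphi^{-1}(0)$ are unchanged under excision, while quasi-finiteness is checked fibrewise over the base and the fibres over points of $U\setminus(U\cap V)$ are empty by construction.

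For the second equivalence, the plan is the same with the simplicial cone replaced by the homotopy pushout $V\amalg^\sim_{(U\cap V)} U$ (see def. \ref{def:hocoeq}). One has a canonical map of pointed simplicial sheaves from $\Fr(V\amalg^\sim_{(U\cap V)}U)$, which by the defining (homotopy) colimit and the simplicial equivalence $\Fr(\mathcal X)\simeq \colim \Fr(\text{pieces})$ used already in Corollary \ref{cor:FrPairFreqNisEq}, is the homotopy pushout $\Fr(V)\amalg^\sim_{\Fr(U\cap V)}\Fr(U)$ of the corresponding sheaves of framed correspondences. The claim is then that the comparison map to $\Fr(U\cup V)=\Fr(X)$ is a Nisnevich-local equivalence, i.e. that $\Fr$ takes the Nisnevich square $(U\cap V, U, V, X)$ to a homotopy pushout square of Nisnevich-local sheaves. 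This is exactly a Mayer–Vietoris / Nisnevich-descent statement for $\Fr$; one reduces it, by the first part and the octahedral-type comparison of the cofibers $\Fr(X)/\Fr(U)$ and $\Fr(V)/\Fr(U\cap V)$, to the first equivalence. Concretely: the cofiber of $\Fr(U)\to\Fr(X)$ is $\Fr(X//U)$ up to simplicial equivalence, the cofiber of $\Fr(U\cap V)\to\Fr(V)$ is $\Fr(V//(U\cap V))$, and the first bullet identifies both of these Nisnevich-locally with $\Fr^\qf(X/U)$; since a map of (homotopy) pushout squares that induces an equivalence on the horizontal cofibers is an equivalence, we conclude.

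The step I expect to be the crux is the excision identification $\Fr^\qf(V/(U\cap V))\simeq_\nis \Fr^\qf(X/U)$, because it is the one place where the geometry of the support — rather than formal properties of colimits and Nisnevich descent — enters. Everything else is bookkeeping: rewriting cofibers of $\Fr$ in terms of simplicial cones (as in Corollary \ref{cor:FrPairFreqNisEq}), invoking that Nisnevich-local equivalences are stable under these operations, and assembling the two squares. I would therefore organize the write-up as: (1) excision for $\Frqf$; (2) the first equivalence by combining (1) with Corollary \ref{cor:FrPairFreqNisEq}; (3) the second equivalence by the cofiber comparison just described.
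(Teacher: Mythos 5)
Your overall plan — excision for $\Frqf$, then Corollary~\ref{cor:FrPairFreqNisEq}, then a cofiber comparison for the second map — is indeed the route the paper takes. But there are two concrete slips in the write‑up of the first step that you should fix before this is a proof.

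First, you set out to ``identify $\Fr(X/U)$ with $\Fr^\qf(V/(U\cap V))$ as pointed Nisnevich sheaves,'' and this identification is false: $\Frqf(X/U)\hookrightarrow\Fr(X/U)$ is a proper inclusion whose quotient is \emph{not} Nisnevich‑locally trivial — that inclusion is only an $\A^1$-equivalence (Proposition~\ref{prop:FrqfFrAEq}, the moving lemma). What is true is the chain
$\Fr(V//(U\cap V))\simeq_\nis\Frqf(V/(U\cap V))\cong\Frqf(X/U)\simeq_\nis\Fr(X//U)$,
where Corollary~\ref{cor:FrPairFreqNisEq} is used \emph{twice}: once for the pair $(V,\,U\cap V)$ and once more, in the other direction, for $(X,U)$. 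You omitted the second application. Consequently the Nisnevich-local statement has target $\Fr(X//U)$ (equivalently $\Frqf(X/U)$), not $\Fr(X/U)$; the paper's own proof makes this plain, and with the target as literally printed the assertion would be false (it would contradict the fact that the Cone Theorem requires a genuine $\A^1$-homotopy, not just Nisnevich sheafification). In short: you tried to prove the printed statement verbatim, and the printed statement has a slip; you should have flagged it rather than asserting an identification that cannot hold.

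Second, your description of the excision step misreads Definition~\ref{def:Frqf}. The quasi-finiteness there is over the \emph{source} scheme on which the presheaf is evaluated (the paper confusingly reuses the letter $U$ for it; it is the $S$ of Definition~\ref{def:FrCor}), not over the open subscheme $U\subset X$ or $U\cap V\subset V$. So there is no ``$U\cap V$ versus $U$'' base to reconcile: the qf condition is literally the same in $\Frqf(S,V/(U\cap V))$ and $\Frqf(S,X/U)$. The actual content of the excision isomorphism is about the support: for a correspondence $(Z,\mathcal V,\varphi,g)$ with $g\colon\mathcal V\to X$ and $Z=(\varphi,g)^{-1}(0\times(X\setminus U))$, one has $X\setminus U\subset V$, hence $Z\subset g^{-1}(V)$, and one may shrink $\mathcal V$ to $g^{-1}(V)$ without changing the class; this and the evident forward map give $\Frqf(V/(U\cap V))\cong\Frqf(X/U)$ as sheaves, which is exactly what the paper means by ``easy to see from the definition.'' Your cofiber comparison for the second equivalence is at the same level of detail as the paper's one-line ``Then \dots follows''; to make it airtight one should check it stalkwise on henselian locals, where the quotients $\Fr(V)/\Fr(U\cap V)\to\Fr(X)/\Fr(U)$ are isomorphisms of pointed \emph{sets} and the map $\Fr(S,V)\to\Fr(S,X)$ is a monomorphism, from which the pushout comparison follows elementarily.
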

\begin{proof}[Proof]
It is easy to see from the definition that 
$\Frqf(V/(U\cap V))\to \Frqf(X/U)$ 
Then by corollary \ref{cor:FrPairFreqNisEq} we have the Nisnevich local equivalence
$\Fr(V//(U\cap V))\to \Fr(X//U)$.
Then the equivalence 
$\Fr( V\amalg^\sim_{(U\cap V)} U )\to \Fr(X)$
follows.
\end{proof}
\begin{definition}\label{hocoeq}
Let $f_1\colon U\to X$ and $f_2\colon U\to Y$ be morphisms in $\Sm_k$. 
Define a simplicial scheme
$V\amalg^\sim_{(U\cap V)} U$ as 
the colimit in the category of simplicial schemes
\[
V\amalg^\sim_{(U\cap V)} U = 
\colim( X \xleftarrow{f_1} U \xrightarrow{i_1} U\times \Delta^1 \xleftarrow{i_0} U \xrightarrow{f_2} Y )\in \Delta^\mathrm{op}\Sm_k
\] 
where 
$\Delta^1$ denotes the simplicial interval, and,
and the morphisms $i_0,i_1\colon U\to U\times\Delta^1$ are the face morphisms.

\end{definition}
\begin{remark}\label{def:hocoeq}
The simplicial scheme $V\amalg^\sim_{(U\cap V)} U$
represents the homotopy co-equaliser $\hocoeq( U\to (X\amalg Y) )$ in the homotopy category of simplicial presheaves.
\end{remark}

\section{Moving lemma.}\label{sect:MovingLemma}

In this section 
we prove a moving lemma 
contracting 
the sheaf $\Fr(-,X/(X-Y))$ to the subsheaf $\Frqf(-,X/(X-Y))$
by an exhaustive family of partly defined $\A^1$-homotopies.
Throw out the section we work with given 
smooth affine $X\in \AffSm_k$ over an infinite field $k$, 
and a closed subscheme $Y\subset X$.

The main results of the section are proposition \ref{pr:AlmAllQF} and proposition \ref{prop:hom-to-qf}. 
Before we formulate the result let us give the following definition.

\begin{definition}
Given a scheme $Y$ and integers $n$ and $i$, 
we put 
$$\Gamma_d=\{s=(s_i)\in \Gamma(\mathbb P^n\times X,\mathcal O(d)^n)| s_i\big|_{\mathcal N\times Y}=x_i \cdot x_\infty^{d-1}\},$$
where $\mathcal N = Spec\,k[\A^n]/(x_1,\dots x_n)^2\subset Y\times\mathbb P^n$.
\end{definition}

\begin{proposition}\label{pr:AlmAllQF}
Let $U$ be a scheme of a finite Krull dimension, and $\varphi=(Z,\mathcal V,\varphi,g)\in \Fr_n(U,X/(X-Y))$. 
Then for any $l\in \mathbb Z$ for some $d\in\mathbb Z$ there is an open subset $\mathcal U\subset \Gamma_d$ 
such that \begin{itemize}
\item[(1)] $\codim( W / \Gamma_d ) \geq l$, where $W=(\Gamma_d\setminus \mathcal U)\subset \Gamma_d$ is the closed complement,
and 
\item[(2)] for any rational point $s=(s_i)\in \mathcal U$ 
the scheme-theoretical preimage $(f\circ (\varphi,g))^{-1}(0)$ is quasi-finite over $U$, 
where $f=(f_i)$, $f_i=s_i/x_\infty^d$.
\end{itemize}
Moreover if $\varphi\in \Frqf_n(U,X/(X-Y))$ then there is $\mathcal U\subset \Gamma_d$ as above and such that $\mathcal U\ni (x_1x^{d-1}_\infty,x_1x^{d-1}_\infty,\dots  x_n x^{d-1}_\infty)$. 
\end{proposition}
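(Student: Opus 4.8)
The plan is to reduce the statement to a genericity argument over the linear system $\Gamma_d$ of sections of $\mathcal O(d)^n$ on $\mathbb P^n\times X$ that are constrained along the first-order neighbourhood $\mathcal N\times Y$. The starting point is the observation that the framed correspondence $\varphi=(Z,\mathcal V,\varphi,g)$ gives, after passing to a Nisnevich neighbourhood, regular functions $\varphi=(\varphi_i)$ on $\mathcal V$ whose common zero locus, intersected with $g^{-1}(Y)$, is $Z$, finite over $U$. First I would fix $d$ large enough so that the constrained linear system $\Gamma_d$ is base-point free away from the section $x_\infty=0$ at infinity, and so that the original $\varphi_i$ (suitably extended/homogenised to sections $x_i x_\infty^{d-1}$ near $\mathcal N\times Y$, which is exactly the constraint imposed in the definition of $\Gamma_d$) sits inside the system; this takes care of the ``moreover'' clause once we also check it sits in the appropriate open subset $\mathcal U$. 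The key point is that membership of $\varphi$ in $\Frqf_n$ says precisely that $\varphi^{-1}(0)$ is already quasi-finite over $U$, which is an open condition on $s\in\Gamma_d$, so $\varphi$ automatically lies in the open $\mathcal U$ we are about to produce.

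The core of the argument is a dimension count. I would consider the incidence variety
\[
\mathcal Z=\{(s,v)\in \Gamma_d\times \mathcal V \mid f_s(\varphi(v),g(v))=0\},
\]
where $f_s=(s_i/x_\infty^d)$ is the section $s$ read as a tuple of functions on the relevant chart, fibered over $U$. The ``bad'' locus $W\subset\Gamma_d$ is the set of $s$ for which the fibre of $\mathcal Z\to\Gamma_d$ over $s$ fails to be quasi-finite over $U$, i.e. contains a component of positive relative dimension over $U$. The standard move is: project $\mathcal Z\to\mathcal V$ (or rather to $\mathcal V\times_X$-the-source-over-$U$); for a fixed point $v$ in the fibre over a point of $U$, the set of $s\in\Gamma_d$ with $f_s$ vanishing at $v$ is a linear subspace of $\Gamma_d$ of codimension $n$ (using base-point-freeness of the constrained system at points away from $Y$ and from infinity — here one must treat points over $Y$ separately, but on $\mathcal N\times Y$ the values are pinned to $x_i x_\infty^{d-1}$, so the vanishing locus there is cut out by the coordinates $x_i$ themselves and the fibre structure over $U$ is controlled by $Z\to U$ being finite). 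Hence the fibre of $\mathcal Z\to \mathcal V$ over a point has codimension $n$ in $\Gamma_d$, so $\dim_U \mathcal Z = \dim \Gamma_d + \dim_U \mathcal V - n$; imposing that a whole positive-dimensional family of $v$'s over a point of $U$ lies in a common fibre of $\mathcal Z\to\Gamma_d$ forces an extra drop, and one gets $\codim(W/\Gamma_d)\geq \dim\mathcal V_{/U}-n+1$ or better, which can be pushed past any prescribed $l$ by taking $d$ large (so that $\dim\Gamma_d$ grows but the bad locus does not). I would make this precise by stratifying $\mathcal V$ by relative dimension over $U$ and running the count on each stratum, and by using that $U$ has finite Krull dimension so all the relevant schemes are finite-dimensional and the fibre-dimension semicontinuity theorems apply.

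The main obstacle I anticipate is the behaviour along $Y$ and at infinity, i.e. making the ``base-point-free'' input honest. Away from $\mathcal N\times Y$ and from $\{x_\infty=0\}$ the linear system $\Gamma_d$ separates points and directions for $d\gg 0$ by a Serre-vanishing / very-ampleness argument on $\mathbb P^n\times X$ (with $X$ affine, so $H^1$ of the relevant twisted sheaves vanishes for large $d$), giving the codimension-$n$ fibre computation cleanly; but on $\mathcal N\times Y$ the sections are rigid — their $1$-jets are forced — so the vanishing condition there is not ``$n$ linear conditions from $\Gamma_d$'' but rather a fixed condition, and the quasi-finiteness over $U$ near $g^{-1}(Y)$ must instead be inherited from the hypothesis that $Z=(\varphi,g)^{-1}(0\times Y)$ is finite over $U$, together with the observation that perturbing $s$ inside $\Gamma_d$ does not change the behaviour near $\mathcal N\times Y$ to first order. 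So the real content is to show the bad locus $W$ is supported on the ``non-$Y$'' part of $\mathcal V$ and there the dimension count goes through. I would isolate this as a lemma: the restriction map $\Gamma_d\to \Gamma(\mathcal N\times Y,\ \cdot\,)$ being constant (equal to the pinned jet) means the family of $f_s$ is, near $Z$, a family of deformations fixing the finite scheme $Z\to U$, hence quasi-finiteness near $Z$ is automatic and uniform in $s$. Everything then combines: choose $d$ with (i) the Serre-vanishing/separation property, (ii) $\dim\Gamma_d$ large enough that the generic-quasi-finiteness codimension exceeds $l$, and (iii) $\varphi$ itself realised by some $s_0\in\Gamma_d$; then $\mathcal U=\Gamma_d\setminus W$ works, and in the quasi-finite case $s_0\in\mathcal U$ by openness of quasi-finiteness, giving the ``moreover'' clause with the explicit point $(x_1x_\infty^{d-1},\dots,x_nx_\infty^{d-1})$ corresponding to the standard homogenisation of $\varphi$.
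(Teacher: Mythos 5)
There is a genuine gap in the dimension count, and it is exactly where you say ``which can be pushed past any prescribed $l$ by taking $d$ large.'' Your count, as written, gives the following: the incidence scheme $\mathcal Z\subset\Gamma_d\times\mathcal V$ has $\dim\mathcal Z=\dim\Gamma_d+\dim U$ (since $\dim_U\mathcal V=n$ and the vanishing of $n$ sections cuts down by $n$), the generic fibre of $\mathcal Z\to U\times\Gamma_d$ is $0$-dimensional, and over the bad locus $B\subset U\times\Gamma_d$ the fibre dimension is $\geq 1$, so $\codim(B/U\times\Gamma_d)\geq 1$ and hence $\codim(W/\Gamma_d)\geq 1-\dim U$. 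That bound does not improve when $d$ grows: both $\dim\Gamma_d$ and the a priori size of the bad locus grow together, and the single-fibre count produces a fixed codimension estimate independent of $d$. Stratifying $\mathcal V$ by relative dimension over $U$ does not change this, since $\dim_U\mathcal V=n$ throughout (it is \'etale over $\A^n_U$). So the mechanism by which $d$ large is supposed to help is not actually present in your argument.

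The missing idea, and the heart of the paper's proof of Lemma \ref{lm:codimcalUGammad}, is the passage to $m$-th fibre powers. One forms $\cSm=S^m_{U\times\Gamma_d}\cap\cVm\times\Gamma_d$ over configurations of $m$ pairwise distinct points of $\mathcal V$ (with the points over $0\times Y$ excluded). Sublemma \ref{lm:indeppointcond} shows, via the relative Serre vanishing you already invoke, that for $d>M(m)$ the constrained system $\Gamma_d$ separates any $m$ such points, so vanishing at them imposes $nm$ independent linear conditions and $\dim_U\cSm=\dim\Gamma_d$. On the other hand if $(u,s)\in B$ the fibre of $S$ over $(u,s)$ has dimension $\geq 1$, so its $m$-th power has dimension $\geq m$; this yields $\codim(B/U\times\Gamma_d)\geq m$ and hence $\codim(W/\Gamma_d)\geq m-\dim U$, which exceeds $l$ once $m>l+\dim U$. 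It is this amplification via $m$-fold configurations that converts ``$d$ large'' into ``codimension large'', and it is what your write-up needs to add. Your treatment of the locus along $\mathcal N\times Y$ (pinned jets, quasi-finiteness near $Z$ uniform in $s$, openness of quasi-finiteness giving the ``moreover'' clause) matches the paper's and is fine; what remains is replacing the single-fibre count by the $m$-th power count together with the $m$-point separation statement.
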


\begin{proof}

Set $\mathcal N = \Spec k[\A^n]/(x_1,\dots x_n)^2$, and for any scheme $G$ define 
\begin{equation}\label{eq:GammadG}\Gamma_{d,G} = \{ s=(s_1,\dots ,s_n)\in \Gamma(\mathbb P^n\times X\times G, \mathcal O(d))^n  |
s_i\big|_{\mathcal N\times\overline{Y}\times G} = x_i\cdot x_0^{d-1}
\}.\end{equation}
Consider the universal section $\tilde s=(\tilde s_i)\in \Gamma_{d,\Gamma_d}$, 
and a closed subscheme $S=(\tilde f\circ(\varphi,g))^{-1}(0)\subset \mathcal V\times\Gamma_d$,
where $\tilde f=(\tilde f_i)$, $\tilde f_i=\tilde s_i/x_\infty^d$. 
In other words \[S= \{(p,s)\in \mathcal V\times\Gamma_d \colon s((\varphi,g)(p))=0\}.\]

Let $B\subset U\times \Gamma_d$ be a closed subscheme such that $z=(u,s)\in B$ whenever the fibre $S\times_U z$ is not quasi-finite over $z$.
Define $\mathcal U=\Gamma_d\setminus \overline{\pr_{\Gamma_d}( B)}$, where $\overline{\pr_{\Gamma_d}( B) }$ denotes the closure of the image of $B$ under the projection $\pr_{\Gamma_d}\colon U\to \Gamma_d$.
The proposition follows form the lemma.
\begin{lemma}\label{lm:codimcalUGammad}
For any $m\in \mathbb Z$ there is $M(m)$, for all $d>M(m)$, 
we have \begin{equation}\label{eq:codimcalUGammad}\codim( (\Gamma_d\setminus \mathcal U) / {\Gamma_d} )\geq m-\dim_k U,\end{equation}
where $\mathcal U\subset \Gamma_d$ is the open subscheme as above.
\end{lemma}
Let us deduce the proposition.
Choose some $m>dim\,U+l$, and $d>M(m)$ as in the lemma above.
Then point (1) of the proposition follows by \eqref{eq:codimcalUGammad}.
Point (2) follows by the definition of $B$,
and moreover, it follows 
that if $\varphi\in \Frqf_n(U,X/(X-Y))$ then $B\not\ni (x_1 x_\infty^{d-1}, x_2 x_\infty^{d-1}, \dots x_n x_\infty^{d-1})$.

Briefly speaking we argument for lemma \ref{lm:codimcalUGammad} is the following. Consider the $m$-th power $S^m_{U\times\Gamma_d}$ of $S$ over $U\times\Gamma_d$. For large enough $d$ the closed subscheme $S\subset \mathcal V\times\Gamma_d$ is defined by $n$ independent equations, and 
$S^m_{U\times\Gamma_d}\subset \mathcal V^m_U\times\Gamma_d$ is defined by $n m$ independent equations. 
So $\dim (S^m_{U\times\Gamma_d}) = \dim (\mathcal V^m_U\times\Gamma_d)-nm = \dim U + \dim \Gamma_d$. 
On other side the relative dimension of $S$ over $B$ is at least one, and $\dim_{B} (S^m_{U\times\Gamma_d})\geq m$. 
Then $\codim (B/ U\times\Gamma_d)\geq m$, and the claim follows. 
We start the strict detailed proof.

\begin{proof}[Proof of lemma \ref{lm:codimcalUGammad}]


Consider the schemes $\PP^n\times X$,
$\mathcal E = \A^n\times{X}\times U$ and $\overline{\mathcal E} = \PP^n\times {X}\times U$.
The inverse images of $\mathcal O(1)$ from $\PP^n$ to $\mathbb P^n\times X$ and $\overline{\mathcal E}$ 
we denote these sheaves by the same symbol.
Let $[x_0\colon x_1\colon \dots x_n]$ denote coordinates on $\PP^n$, and their inverse images as well.
Since $X$ is affine, it follows that $\mathcal O(1)$ on $\mathbb P^n\times X$ is ample.

Next, define the regular map $$
\psi = (\varphi,g,\pr^{\mathcal V}_U)\colon \mathcal V\to \mathcal E , \text{ where } \pr^{\mathcal V}_U\colon \mathcal V\to U.
$$
Since $Z=\psi^{-1}(0\times Y\times U)$ is finite over $U$, it follows that $\psi$ is quasi-finite over $0\times Y\times U$.
Hence there is a Zariski neighbourhood $\mathcal V^\prime$ of $0\times Y\times U$ in $\mathcal E$
such that $\psi$ is quasi-finite over $\mathcal V^\prime$.
Shrink $\mathcal V$ to the open subscheme $\mathcal V\times_{\mathcal E} \mathcal V^\prime\subset \mathcal V$.
Then new $\psi$ is quasi-finite.

Let $\psi^m\colon \mathcal V^m_U\to \mathcal E^m_U $
be $m$-th power of $\psi$,
and define open subschemes
\begin{multline*}
\cEm =
\{ (p_1,\dots, p_m)\in \mathcal E^m_U |\,
p_i\neq p_j, \text{ for } i\neq j, \text{ and }\\
p_i\not\in 0\times Y \times U \text{ for all }i
\}\subset\mathcal E^m_U,
\end{multline*} and
\begin{equation*}
\cVm = 
{(\psi^m)}^{-1}(\cEm) \,=\,
\mathcal V^m_U - {(\psi^m)}^{-1}( \mathcal E^m_U \setminus \cEm ) \subset
\mathcal V^m_U
.\end{equation*}
  
Define a closed subscheme
\[\cSm\subset \{(p,s)\in \cVm\times \Gamma \colon s((\varphi,g)(p))=0\}\subset \cVm\times\Gamma.\]
Note that $\cSm= S^m_{U\times\Gamma_d}\times_{\mathcal V^m_U} \cVm\subset S^m_{U\times\Gamma_d}$ is an open subscheme.
Consider the commutative diagram of schemes over $U$
$$\xymatrix{
& \cVm\times \Gamma_d\ar[ld]\ar[rd] & \\
U\times\Gamma_d & \cSm\ar[l]\ar[r]\ar@{^(->}[u] & \cVm  \\
B\ar[u] & BS\ar[u]\ar[l] & \\
}$$
where the left bottom square is Cartesian.

It follows by sublemma \ref{lm:indeppointcond}, which follows in the text, that 
$\codim_{\cVm\times\Gamma_d} \cSm = nm$, hence
\[\dim_{U} ( \cSm ) = \dim_U(\cVm\times\Gamma_d) - nm=\dim_k \Gamma_d,\] 
and thus
\begin{equation}\label{eq:codimcSmGammad}\dim ( \cSm / (U\times \Gamma_d) ) = 0.\end{equation} 

On other side 
the points of the subscheme $B$ are 
given by the pairs $(u,s)$, $s\in \Gamma(\overline{\mathcal E},\mathcal O(d)^n)$, $u\in U$, 
such that the vanishing locus $Z((\varphi,g)^*(s))=(f\circ (\varphi,g))^{-1}(0)$ is not quasi-finite over $(u,s)$.
So for any $(u,s)\in B$ we have $\dim_{(u,s)} Z((\varphi,g)^*(s))\geq 1$,
and thus $\dim ( (S\times B) / B)\geq 1$. 
Then
\begin{equation}\label{eq:codimSB/B}\codim ( BS / B )\geq m,\, BS =\cSm \times_{(U\times\Gamma_d)} B.\end{equation}

Thus 
\begin{multline*}
\dim \overline{\pr(B)}\leq \dim B\stackrel{\eqref{eq:codimSB/B}}{\leq} \dim BS-m\leq\dim \cSm - m\stackrel{\eqref{eq:codimcSmGammad}}{\leq} \\ \dim(U\times\Gamma_d)-m=\dim\Gamma_d-(m-\dim U).\end{multline*}

\begin{sublemma}\label{lm:indeppointcond}
Given a point $p=(p_1,\dots p_m)\in \cEm$, 
a closed subscheme $Q\subset \mathbb P^n\times X\times  p$ is the union of graphs of $p$-points $(p_i)\colon p\to\mathbb P^n\times X$.
Let 
\[r_p\colon \Gamma_{d,p}\to \mathcal O(Q)^n \simeq k(p)^{m n}\colon (s_i)\mapsto ( (s_i/x_\infty^d)\big|_{Q} )\]
denotes the restriction homomorphism, see \eqref{eq:GammadG} for $\Gamma_{d,p}$. 

Then 
for any $m\in \mathbb Z$ there is $M(m)\in\mathbb Z$ such that, for all $d > M(m)$, for any 
point $p\in \cEm$, the homomorphism $r_p$ is surjective.
\end{sublemma}
\begin{proof}[Proof of lemma \ref{lm:indeppointcond}.] 
For any $p=(p_1,\dots p_m)\in \cEm$ and $d\in \mathbb Z$,
denote 
\[j_p\colon Q\amalg \mathcal N\times \overline Y\to \mathbb P^n\times X\times p.\] 
Then we have 
the restriction homomorphism of sheaves 
\[\mathcal O(d)^n\twoheadrightarrow {j_p}^*({j_p}_*(\mathcal O(d)^n))\] on $\mathbb P^n\times X\times p$
and
the restriction homomorphism of global sections 
\[\Gamma(\overline{\mathcal E},\mathcal O(d))^n\to \Gamma(Q\amalg \mathcal N\times\overline Y)^n.\]


Let
\[j\colon G\to \overline{\mathcal E}, \;
G = \left(\coprod\limits_{i=1\dots m} \Delta_{i} \right)
  \coprod \mathcal N\times Y\times \cEm,\]
and $\Delta_i\subset \overline{\mathcal E}\times_U \cEm$ denotes graph of the $i$-th projection $\cEm\to \mathcal E$. 

The scheme $\mathbb P^n\times X\times p$ is equal to $\overline{\mathcal E}\times_{U} p$, and $j_p$ is the fibre of $j$.
Consider the universal restriction homomorphism of coherent sheaves over $\cEm$
\[\rho_{\cEm}\colon \mathcal O(d)^n \twoheadrightarrow j_*(j^*(\mathcal O(d)^n)), \]
that is the surjective homomorphism of coherent sheaves on the scheme $\overline{\mathcal E}\times_U \cEm$ that is open subscheme in $\mathbb P^n \times U\times (\mathbb A^n\times X)^m $.
Consider the direct image of $\rho_{\cEm}$ 
\[\pr_*(\rho_{\cEm})\colon {\pr}_*(\mathcal O(d)^n) \to {\pr}_*(j_*(j^*(\mathcal O(d)^n))), \pr\colon {\overline{\mathcal E}}\times_U \cEm\to \cEm, \]
that is a homomorphism of coherent sheaves on $\cEm$.
It follows by the relative version of Serre's theorem on ample bundles
\cite[Chapter~III, Theorem~8.8]{Ha77}]
that for large enough $d$ the homomorphism $\pr_*(\rho_{\cEm})$ is surjective.

The sheaf $\pr_*(\mathcal O(d^n))$ 
is the constant sheaf on $\cEm$ 
defined by to the $k$-vector space
$\Gamma(\mathbb P^n\times X, \mathcal O(d) )^n$,
and the sheaf $\pr_*(j_*(j^*(\mathcal O(d)^n)))$ is equal to
\[
\bigoplus\limits_{i=1\dots m}  \mathcal O(\Delta_i)^n \oplus 
{\Gamma( \mathcal N\times Y, \mathcal O(d) )^n } 
\simeq
\mathcal O(\cEm)^{n m} \oplus 
{\Gamma( \mathcal N\times Y, \mathcal O(d) )^n } 
,\] 
where the second summands denote the constant sheaf on $\cEm$  
defined by the $k$-vector space ${\Gamma( \mathcal N\times Y, \mathcal O(d) )^n }$.

Hence since $\pr_*(\rho_{\cEm})$ is surjective the fibre of $\pr_*(\rho_{\cEm})$ at a point $p=(p_1,\dots p_m)\in\cEm$
is equal to the surjective homomorphism of $k(p)$-vector spaces
\[
\Gamma( \mathbb P^n\times X ,\mathcal O(d) )^n\otimes k(p) \twoheadrightarrow
k(p)^{m n} \oplus 
\Gamma(\mathcal N\times {Y}, \mathcal O(d))^n\otimes k(p), \text{ for any }p\in\cEm.
\]
Thus 
the homomorphism 
$\Gamma_{d,p}=\Gamma_d\otimes k(p)\to k(p)^{n m}$ is surjective for any $p\in \cEm$,
because of the 
diagram of pointed sets with Cartesian squares
$$\xymatrix{
(\Gamma_{d,p}, (x)) \ar@{^(->}[d] \ar@{->>}[r] & ( \bigoplus\limits_{i=1\dots m} k(p_i) , (x) )  \ar@{^(->}[d] \\
(\Gamma(\mathbb P^n\times X\times p, \mathcal O(d))^n, (x) ) \ar@{->>}[d]\ar@{->>}[r] & 
  (\bigoplus\limits_{i=1}^n  k(p_i), (x) ) \times 
  (\Gamma(\mathcal N\times Y, \mathcal O(d))^n_{k(p)}, (x)) \ar@{->>}[d] 
\\
(\Gamma(\mathcal N\times Y, \mathcal O(d))^n_{k(p)}, (x)) \ar@{=}[r] &
(\Gamma(\mathcal N\times Y, \mathcal O(d))^n_{k(p)} , (x) )
,}$$
where $\Gamma(\mathcal N\times Y, \mathcal O(d))^n_{k(p)}=\Gamma(\mathcal N\times Y, \mathcal O(d))^n\otimes k(p)$, and
all sets of sections are pointed at the class of the vector-section $(x)=(x_i \cdot x_\infty^{d-1})$ given by coordinates on $\PP^n$.
\end{proof}
\end{proof}
\end{proof}

\begin{definition}\label{def:Phi^s}
Suppose $U,X\in \Sm_k$, $Y\subset X$ is a closed subset, 
$\varphi=[(Z,\mathcal V,\varphi,g)]\in Fr_n(U,X/(X-Y))$ is a framed correspondence, and 
$s=(s_i)_{i=1\dots n}\in \Gamma(\mathbb P^n\times  X ,\mathcal O(d))^n$ is a section
such that $s_i\big|_{\mathcal I(0\times Y)^2}=x_i$. 

Denote by  $\varphi^{s}\in Fr_n(U,X/(X-Y))$ the framed correspondence defined by the class of the set $(Z,\mathcal V,\zeta \circ \varphi,g)$, where $\zeta\colon \A^n_X\to \A^n_X$ is the regular map defined by functions $s_i/x_{\infty}^d\in k[\A^n_X]$, $i=1\dots n$.
\end{definition}
\begin{definition}\label{def:Phi^lambdas}
Suppose $U$, $X$, $Y$ are as in def. \ref{def:Phi^s}, and 
$\varphi=[(Z,\mathcal V,\varphi,g)]\in Fr_n(U\times\A^1,X/(X-Y))$ is a framed correspondence, and 
$s=(s_i)_{i=1\dots n}\in \Gamma(\A^1\times \mathbb P^n\times  X ,\mathcal O(d))^n$ is a section
such that $s_i\big|_{\mathcal I(\A^1\times 0\times Y)^2}=x_i$. 

Denote by  $\varphi^{s}\in Fr_n(U\times \A^1,X/(X-Y))$ the framed correspondence given by 
$(Z,\mathcal V,\zeta \circ \varphi^\prime,g)$, 
where $\varphi^\prime = \zeta_\lambda\circ \varphi$, 
$\lambda\colon \mathcal V\to \A^1$ is the canonical projection,
and for any $\lambda \in \A^1$, the map $\zeta_\lambda\colon \A^n_X\to \A^n_X$ is the regular map 
defined by functions $(s_i\big|_{\lambda\times\PP^n})/x_{\infty}^d\in k[\A^n_X]$, $i=1\dots n$.
\end{definition}

\begin{proposition}\label{prop:hom-to-qf}
Assume the field $k$ is infinite, and let $U\in \Sm_k$.
Then for any framed correspondence 
\[ {(a) }\, \varphi\in \Fr_n(U,X/(X-Y)), \text{ or } { (b) }\, \varphi\in \Frqf_n(U,X/(X-Y)),\]
there is 
$d\in \mathbb Z_{>0}$ and
a vector of sections $s=(s_i)\in\Gamma(\mathbb P^n_X,\mathcal O(d))^n$ such that 
\[ {(a)}\, \varphi^s\in \Frqf_n(U, X/(X-Y) ), \text{ or } { (b) }\, \varphi^{\lambda s+(1-\lambda) x}\in  \Frqf_n(U\times\A^1, X/(X-Y) )\]
respectively, where $x=(x_i x_\infty^{d-1})$.

\end{proposition}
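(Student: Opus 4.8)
The plan is to deduce Proposition \ref{prop:hom-to-qf} from Proposition \ref{pr:AlmAllQF} by a rational-point existence argument. Fix $\varphi = (Z,\mathcal V,\varphi,g)\in \Fr_n(U,X/(X-Y))$. First I would apply Proposition \ref{pr:AlmAllQF} with $l = \dim_k\Gamma_d + 1$ (more precisely, choose the threshold $m > \dim U + l$ and then $d > M(m)$ as in Lemma \ref{lm:codimcalUGammad}, so that the complement $W = \Gamma_d\setminus\mathcal U$ has $\codim(W/\Gamma_d)\geq 1$ and in fact is a proper closed subset). Since the base field $k$ is infinite and $\mathcal U$ is a nonempty open subscheme of the affine space $\Gamma_d$ over $k$ (nonempty because its codimension complement is proper), the set of $k$-rational points $\mathcal U(k)$ is nonempty. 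Pick any rational point $s=(s_i)\in\mathcal U(k)\subset\Gamma_d$; by point (2) of Proposition \ref{pr:AlmAllQF} the scheme $(f\circ(\varphi,g))^{-1}(0)$ with $f_i = s_i/x_\infty^d$ is quasi-finite over $U$, which is exactly the assertion that $\varphi^s\in\Frqf_n(U,X/(X-Y))$ in the notation of Definition \ref{def:Phi^s}. This settles case (a).

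For case (b), assume $\varphi\in\Frqf_n(U,X/(X-Y))$. I would again invoke Proposition \ref{pr:AlmAllQF}, now using its last sentence: there is an open $\mathcal U\subset\Gamma_d$ with $\codim(W/\Gamma_d)$ as large as we like \emph{and} with $x := (x_1x_\infty^{d-1},\dots,x_nx_\infty^{d-1})\in\mathcal U$. The homotopy I want to build is the straight-line path $\lambda s + (1-\lambda)x$ in $\Gamma_d$ joining $x$ to a chosen good point $s$. The key point is that this affine line $L\cong\A^1\subset\Gamma_d$ must meet $\mathcal U$ in an open subset $L\cap\mathcal U$ whose complement in $L$ is a \emph{finite} set of closed points not containing the endpoint $0\mapsto x$; this holds as soon as the line $L$ is not entirely contained in the closed complement $W$, and since $x\in\mathcal U$ the line is automatically not inside $W$. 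So I would choose $s\in\mathcal U(k)$ generic enough that, moreover, the whole segment parametrised by $\lambda$ stays inside $\mathcal U$ for all $\lambda\in\A^1$ — which requires a sharper input than the raw statement of Proposition \ref{pr:AlmAllQF} gives.

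This is where the main obstacle lies: Proposition \ref{pr:AlmAllQF} as stated only controls quasi-finiteness over \emph{rational points} of $\mathcal U$, whereas for (b) I need quasi-finiteness along an entire $\A^1$-family, i.e. over the generic point of the line $L$ as well. The clean way around this is to re-run the codimension count of Lemma \ref{lm:codimcalUGammad} with $U$ replaced by $U\times\A^1$ and with the universal line in $\Gamma_d$ — equivalently, to apply Proposition \ref{pr:AlmAllQF} to the family $\varphi^{\lambda s + (1-\lambda)x}\in\Fr_n(U\times\A^1,X/(X-Y))$ (Definition \ref{def:Phi^lambdas}) and observe that, because the $\lambda$-family is already quasi-finite at $\lambda$ fixed to any rational value by case (a), the locus of bad $\lambda$ has codimension $\geq 1$ in $\A^1$, hence is a finite set of closed points. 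One then picks $s$ so that none of these bad $\lambda$ is a $k$-point; but a finite closed subset of $\A^1_k$ with $k$ infinite need not avoid all $k$-points unless it is empty, so the real content is to show the bad locus is \emph{empty} after a generic choice of $s$. This is forced by the dimension inequality: for generic $s\in\mathcal U(k)$ the closed subscheme $S_s := (\tilde f\circ(\varphi,g))^{-1}(0)$ over $U\times\A^1$ has relative dimension $0$, so there is no room for a positive-dimensional non-quasi-finite fibre, and in particular $\varphi^{\lambda s+(1-\lambda)x}\in\Frqf_n(U\times\A^1,X/(X-Y))$. I would spell this last dimension bookkeeping out by the same argument as in the proof of Lemma \ref{lm:codimcalUGammad}, replacing $U$ by $U\times\A^1$ throughout and using Sublemma \ref{lm:indeppointcond} unchanged.
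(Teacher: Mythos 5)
Your part (a) is correct and matches the paper: take $l$ large enough that $W=\Gamma_d\setminus\mathcal U$ is a proper closed subset, then use infiniteness of $k$ to find a rational point in $\mathcal U$.

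For part (b) you correctly diagnose the obstacle --- quasi-finiteness is needed over the whole line in $\Gamma_d$, i.e.\ including its generic point, so a pointwise/rational-point argument from (a) is insufficient --- but the fix you propose does not quite close the gap, and it differs from the one in the paper. The suggestion to ``apply Proposition~\ref{pr:AlmAllQF} to the family $\varphi^{\lambda s+(1-\lambda)x}$'' is misdirected: that proposition gives control over a \emph{further} twist of the given correspondence, not over the parameter $s$ appearing inside $\varphi^{\lambda s+(1-\lambda)x}$, so you cannot invoke it before $s$ has been chosen. Likewise, ``re-run the codimension count with $U$ replaced by $U\times\A^1$'' would require setting up a new universal family with a $\lambda$-dependent twist (the $\Gamma_d$ of Definition~\ref{def:Phi^lambdas} is over $\A^1\times\PP^n\times X$, not $\PP^n\times X$), and the Sublemma would not go through unchanged; this is precisely the bookkeeping you defer and it is where the argument lives.

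The paper's resolution is sharper and more elementary: apply Proposition~\ref{pr:AlmAllQF} once over $U$ with $l$ chosen so that $\codim(W/\Gamma_d)\geq 2$ \emph{and} (by the ``Moreover'' clause, since $\varphi$ is assumed quasi-finite) so that $x\in\mathcal U$. The set of $s$ for which the line $L_s$ through $x$ and $s$ meets $W$ is contained in the cone $\mathrm{Cl}\bigl(\pr_x^{-1}(\pr_x(W))\bigr)$ over $W$ with vertex $x$; the cone raises dimension by at most one, hence has codimension $\geq\codim W-1\geq 1$ in $\Gamma_d$. Since $k$ is infinite one picks a rational $s$ outside this cone; then $L_s\subset\mathcal U$, so the restriction of the universal family $S$ to $U\times L_s$ avoids the bad locus $B$ entirely, which is exactly the statement that $\varphi^{\lambda s+(1-\lambda)x}$ is quasi-finite over $U\times\A^1$. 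This one-step projection argument avoids any reprise of the dimension count, and also pins down the numerical threshold ($\codim W\geq 2$) that your sketch leaves unspecified.
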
\begin{proof}
a)
Since $k$ is infinite the claim follows immediate from proposition \ref{pr:AlmAllQF}.

b)
By proposition \ref{pr:AlmAllQF} for some sufficiently big $d$ 
there is an open subscheme $\mathcal U\subset \Gamma_d$ such that 
$\codim (\Gamma_d\setminus \mathcal U)\geq 2$ and $\varphi^{s}\in Fr^{qf}_n(U\times\A^1, X/(X-Y) )$ for any $s\in \mathcal U$.
Hence we have  
$\varphi^{\lambda s+(1-\lambda) x}\in Fr^{qf}_n(U\times\A^1, X/(X-Y) )$,
for any $s\in \Gamma_d - Cl_{\Gamma_d}({pr_x^{-1}(pr_x(\Gamma_d\setminus \mathcal U))})$,
where $pr_x\colon \Gamma_d-x\to \mathbb P^{\dim\Gamma_d-1}$ is the linear projection with centre $x$.

Whence 
$\codim Cl_{\Gamma_d}({pr_x^{-1}(pr_x(\Gamma_d\setminus \mathcal U))})\geq \codim (\Gamma_d\setminus \mathcal U) -1\geq 1$, and so the claim follows.

\end{proof}

\section{Proof of the result}
\label{sect:ProofofTheorems}

In the section we prove the Cone Theorem announced in the introduction, theorem \ref{th:ConeTh}, and apply it to deduce the formula for the motivically fibrant resolution in $\mathbf{SH}(k)$, corollary \ref{cor:MPmotresolution}.
Throw out the section the base filed $k$ is assumed being infinite, and it is perfect starting form corollary \ref{cor:MPmotresolution}.
\begin{theorem}\label{th:ConeTh}
Let $X$ be affine smooth $k$-scheme, and $U\subset X$ be open subscheme. 
The natural morphism of pointed simplicial sheaves 
\[\Fr(X//U)\to \Fr(X/U)\] is a 
motivic equivalence,
see def. \ref{def:SimplCone} for $X//U$.
\end{theorem}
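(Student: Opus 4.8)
The strategy is to realize the map $\Fr(X//U)\to \Fr(X/U)$ as the composition of the two maps displayed in \eqref{eq:intr:FrX//YFrqfFr}, namely
\[\Fr(X//U)\xrightarrow{\ \alpha\ }\Frqf(X/U)\xrightarrow{\ \beta\ }\Fr(X/U),\]
and to show that $\alpha$ is a Nisnevich-local (hence motivic) equivalence, while $\beta$ is an $\A^1$-homotopy equivalence of pointed simplicial sheaves. Since any Nisnevich-local weak equivalence is a motivic one, and an $\A^1$-homotopy equivalence is in particular a motivic equivalence, the composite $\beta\circ\alpha$ is a motivic equivalence, which is the assertion of the theorem. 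The first map $\alpha$ and its local-equivalence property are already available: this is precisely Corollary \ref{cor:FrPairFreqNisEq} (working levelwise in $n$ and then passing to the colimit $\varinjlim_n$, which preserves Nisnevich-local equivalences of simplicial sheaves), together with the observation that $\Fr(X//U)\simeq \Fr(X)/\Fr(U)$ simplicially. So the entire weight of the proof falls on constructing the $\A^1$-homotopy equivalence $\beta\colon \Frqf(X/U)\to \Fr(X/U)$.

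For $\beta$ I would use the moving-lemma package of Section \ref{sect:MovingLemma}, namely Propositions \ref{pr:AlmAllQF} and \ref{prop:hom-to-qf}. The idea is standard for such "moving to quasi-finite position" arguments: one wants a (partly defined, exhaustive) family of $\A^1$-homotopies that deforms an arbitrary framed correspondence $\varphi\in\Fr_n(U,X/U)$ to one lying in $\Frqf_n$, and that is already the identity on correspondences which are in $\Frqf_n$ to begin with. Concretely, Proposition \ref{prop:hom-to-qf}(a) produces, for each $\varphi$, a section $s$ with $\varphi^s\in\Frqf_n$, and part (b) produces, when $\varphi$ is already quasi-finite, a homotopy $\varphi^{\lambda s+(1-\lambda)x}$ inside $\Frqf_n(U\times\A^1,X/U)$ connecting $\varphi=\varphi^x$ to $\varphi^s$. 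One then organizes these data into a map $h\colon \Fr(X/U)\to \Fr(X/U)$ factoring through $\Frqf(X/U)$, together with an $\A^1$-homotopy $h\simeq \mathrm{id}$ on $\Fr(X/U)$ and a compatible $\A^1$-homotopy $h|_{\Frqf}\simeq\mathrm{id}$ on $\Frqf(X/U)$, so that in the $\A^1$-local homotopy category $\beta$ becomes invertible with inverse (the class of) $h$. Here it is essential that the constructions $\varphi\mapsto\varphi^s$, $\varphi\mapsto\varphi^{\lambda s+(1-\lambda)x}$ of Definitions \ref{def:Phi^s}–\ref{def:Phi^lambdas} are functorial in $U$ and compatible with the simplicial structure coming from $C^*$, and that the choice of $d$ and $s$ can be made uniformly enough (via the exhaustion by the subsheaves $\Fr_n$ and the codimension bounds in Proposition \ref{pr:AlmAllQF}, point (1)) to assemble a genuine morphism of simplicial sheaves rather than a mere collection of pointwise assignments.

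The main obstacle, and the point requiring the most care, is exactly this assembly step: the moving lemma is stated fiberwise over a test scheme $U$ of finite Krull dimension and produces a section $s$ depending on $\varphi$, whereas to get an honest $\A^1$-homotopy equivalence of sheaves one must make these choices coherent across all $\varphi$ simultaneously. The standard device is to work level by level in $n$ (each $\Fr_n$ being "small" enough that a single sufficiently large $d$ works for all correspondences that are relevant), to use that the relevant space of sections $\Gamma_d$ is affine space and the "good locus" $\mathcal U\subset\Gamma_d$ has complement of arbitrarily large codimension, so that a generic linear path stays in the good locus — this is precisely the content of the codimension bound and of the linear-projection argument at the end of the proof of Proposition \ref{prop:hom-to-qf}(b) — and then to take the colimit over $n$. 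One must also check that the homotopies are compatible with the stabilization maps $\Fr_n\to\Fr_{n+1}$ of Definition \ref{def:FrCor}(iii) and with the pointing (empty support goes to empty support), and that $\beta\circ\alpha$ agrees with the canonical map $\epsilon$ of Theorem \ref{th:intr:ConeTh}; the first is a routine compatibility check and the second is immediate from the definitions. Granting these bookkeeping points, the composite $\Fr(X//U)\to\Fr(X/U)$ is a motivic equivalence, which completes the proof.
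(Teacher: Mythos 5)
Your high-level decomposition is exactly the paper's: factor the canonical map as $\Fr(X//U)\to\Frqf(X/U)\to\Fr(X/U)$, with the first leg a Nisnevich-local equivalence from Corollary~\ref{cor:FrPairFreqNisEq} and the second an $\A^1$-equivalence supplied by the moving lemma. What does not hold up is the mechanism you propose for the second leg. You claim that one can work level by level in $n$ because ``each $\Fr_n$ is small enough that a single sufficiently large $d$ works for all correspondences that are relevant,'' and then build an honest morphism of sheaves $h\colon\Fr\to\Fr$ together with global $\A^1$-homotopies to the identity. That is false: in Proposition~\ref{pr:AlmAllQF} the bound $M(m)$ depends on the explicit correspondence $\varphi$ (through the neighbourhood $\mathcal V$ and the schemes $\cEm$ in Sublemma~\ref{lm:indeppointcond}), and the codimension estimate in Lemma~\ref{lm:codimcalUGammad} loses $\dim_k U$, so the required degree $d$ also grows with the test scheme. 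There is therefore no single section $s$ of a fixed degree $d$ that moves all sections of $\Fr_n$ into $\Frqf_n$ simultaneously, and no single sheaf map $h$ as you describe it; for the same reason a naive colimit over $n$ does not repair the problem.

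The genuine technical content that you are missing is the telescope argument in the paper's proof of Proposition~\ref{prop:FrqfFrAEq}. One filters $\Fr$ not by level $n$ but by the filtering poset $A$ of finite sets $\alpha$ of sections, with $\Fr_\alpha$ the subpresheaf generated by $\alpha$; Lemma~\ref{lm:agrreedPhi^s_maps} produces a single $s_\alpha$ that simultaneously works for the finitely many sections in $\alpha$. These choices are not compatible along $A$, so the maps $(-)^{s_\alpha}\colon\Fr_\alpha\to\Frqf$ do not glue to a sheaf map; instead one forms the telescope $\mathrm{Tel}(\Fr_*)=\varinjlim_{s\in\mathcal CA}F_s\times\Delta^{\#s}$ over the nerve of $A$, uses the convex combinations $\sum_i\lambda_i s_{\alpha_i}$ on the simplices $\Delta^r$ to interpolate between the different choices, and invokes Lemma~\ref{lm:OSetTel} (that $\mathrm{Tel}(\Fr_*)\to\Fr$ is an $\A^1$-equivalence, because filtered colimits preserve $\A^1$-equivalences) together with the partial homotopies $(-)^{\lambda s+(1-\lambda)x}$ to conclude. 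You identified the right obstruction but labelled its resolution a ``bookkeeping point''; in fact it is where the content of Section~\ref{sect:MovingLemma} is actually consumed, and the specific device you suggest in its place would not close the argument.
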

\begin{proof} 
By proposition  \ref{prop:FrqfFrAEq}, that follows in the section, we have the simplicial schemewise equivalence of pointed simplicial sheaves $C^*\Fr^{qf}(X/U)\to C^*\Fr(X/U)$.
By corollary \ref{cor:FrPairFreqNisEq} we have the schemewise equivalence of pointed simplicial sheaves $\Fr(X//U)_f\to \Frqf(X/U)_f$.
So the claim follows.
\end{proof}

\begin{proposition}\label{prop:FrqfFrAEq}
Let $X$ be affine smooth $k$-scheme, and $U\subset X$ be open subscheme. 
Then the natural morphism of sheaves of pointed sets 
\[\Frqf(X/U)\to \Fr(X/U)\] is an $\A^1$-equivalence.
\end{proposition}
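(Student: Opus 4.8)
The plan is to deduce the $\A^1$-equivalence $\Frqf(X/U)\to \Fr(X/U)$ from the explicit partly-defined $\A^1$-homotopies constructed in Proposition~\ref{pr:AlmAllQF} and Proposition~\ref{prop:hom-to-qf}. The idea is that on the level of the infinite-level sheaves $\Fr = \varinjlim_n \Fr_n$ one can contract every framed correspondence into $\Frqf$ by applying the deformation $\varphi\mapsto \varphi^{\lambda s + (1-\lambda)x}$ with $s$ chosen generically in $\Gamma_d$; the point of Proposition~\ref{prop:hom-to-qf}(b) is precisely that, for a correspondence already in $\Frqf$, this homotopy stays inside $\Frqf$, so the inclusion is a retract up to $\A^1$-homotopy in a sense compatible with the filtration. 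So the first step is to fix a henselian local (or arbitrary) $k$-scheme $S$ of finite Krull dimension and an element $\varphi\in\Fr_n(S,X/U)$, apply Proposition~\ref{prop:hom-to-qf}(a) to produce $d$ and a section $s=(s_i)\in\Gamma(\PP^n_X,\mathcal O(d))^n$ with $\varphi^s\in\Frqf_n(S,X/U)$, and note that $\varphi$ and $\varphi^s$ are $\A^1$-homotopic via $\varphi^{\lambda s+(1-\lambda)x}\in\Fr_n(S\times\A^1,X/U)$ (here $x=(x_ix_\infty^{d-1})$ is the ``identity'' section whose associated $\zeta$ is the identity, so that $\varphi^x=\varphi$).

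Second, I would assemble these pointwise homotopies into a statement about the simplicial sheaves $C^*$ applied to $\Fr$ and $\Frqf$, or rather phrase everything at the level of the homotopy category of simplicial presheaves. The cleanest formulation is: the inclusion $\Frqf(-,X/U)\hookrightarrow\Fr(-,X/U)$ of presheaves of pointed sets becomes an isomorphism after applying $\pi_0$ of the $\A^1$-localization (equivalently after applying $C^*$ and taking simplicial $\pi_0$ schemewise), and similarly on all higher $\A^1$-homotopy sheaves, because both the surjectivity (Proposition~\ref{prop:hom-to-qf}(a)) and the injectivity (Proposition~\ref{prop:hom-to-qf}(b), which handles homotopies between elements of $\Frqf$) are supplied. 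Concretely: given two elements of $\Frqf_n(S,X/U)$ that are $\A^1$-homotopic in $\Fr_n$, one lifts the connecting homotopy (an element of $\Fr_m(S\times\A^1,\dots)$ after stabilizing the level) and applies Proposition~\ref{prop:hom-to-qf}(b) to it to replace it by a homotopy landing in $\Frqf$; one has to be mildly careful that Proposition~\ref{prop:hom-to-qf}(b) is stated for a single correspondence over $U\times\A^1$ and deforms it keeping the two endpoints (at $\lambda=0$, where the section is $x$) fixed inside $\Frqf$ — but the endpoint at $\lambda=1$ is $\varphi^s$, not $\varphi$, so some bookkeeping combining (a) and (b), or an induction on the simplicial degree of a given homotopy, is needed.

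The main obstacle I expect is exactly this bookkeeping: turning the two ``generic choice'' statements of Proposition~\ref{prop:hom-to-qf} into a clean statement that $\Frqf\hookrightarrow\Fr$ induces a bijection on $\pi_n^{\A^1}$ for all $n$ and all test schemes, i.e.\ upgrading pointwise $\A^1$-homotopies (which depend on noncanonical choices of $d$ and $s$) to a genuine deformation retraction or at least a $\A^1$-equivalence of (sheaves of) simplicial sets. The standard device is to observe that one does not need a single homotopy valid for all correspondences simultaneously; it suffices, by the usual ``contracting an exhaustive filtered family'' argument (as in the Garkusha--Neshitov--Panin treatment of relative spheres, \cite{GNP_FrMotiveRelSphere}), that for every finite simplicial subcomplex — i.e.\ every finite collection of correspondences and homotopies among them coming from a map of a finite simplicial set — there is a uniform $d$ and an open $\mathcal U\subset\Gamma_d$ of high codimension complement (Proposition~\ref{pr:AlmAllQF}(1)) and a section $s\in\mathcal U$ through which all of them can be simultaneously deformed into $\Frqf$, compatibly. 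This uses that $\Gamma_d$ is an affine space (so $k$-rational points in a high-codimension-complement open set exist since $k$ is infinite) and that the ``bad locus'' for finitely many correspondences is still of high codimension. So the final step is: (i) reduce to checking $\pi_n^{\A^1}$ on finitely generated data; (ii) for such data pick a common $d$ and a common good section; (iii) use Proposition~\ref{prop:hom-to-qf}(a) to push generators into $\Frqf$ and Proposition~\ref{prop:hom-to-qf}(b) to push relations (homotopies) into $\Frqf$; (iv) conclude bijectivity on $\pi_n^{\A^1}$, hence that $\Frqf(X/U)\to\Fr(X/U)$ is an $\A^1$-equivalence.
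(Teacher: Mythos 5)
Your proposal lands on the same strategy as the paper: the moving lemma (Propositions~\ref{pr:AlmAllQF} and \ref{prop:hom-to-qf}) supplies partly defined $\A^1$-homotopies contracting $\Fr$ into $\Frqf$, and the ``exhaustive filtered family'' device you cite from \cite{GNP_FrMotiveRelSphere} is exactly the mechanism the paper uses. What the paper does that your sketch leaves implicit is to make that device concrete via the telescope $\mathrm{Tel}(F_*)$ indexed by the filtering ordered set $A$ of finite subsets of sections of $\Fr$ (Lemma~\ref{lm:OSetTel}), rather than passing to $\pi_n^{\A^1}$-bijectivity on ``finitely generated data'' as you propose. The two points where your outline is genuinely thin are: first, Lemma~\ref{lm:agrreedPhi^s_maps} (the uniform choice of a single section $s_\alpha$ that simultaneously pushes all of a finite set $\alpha$ into $\Frqf$ while moving the subset $\beta=\alpha\cap\Frqf$ through a homotopy staying inside $\Frqf$) is precisely what resolves the ``endpoint'' bookkeeping you flag; it is not just a convenience, it is the step that makes (a) and (b) compatible. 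Second, the dependence of $s_\alpha$ on $\alpha$ is handled not by a reduction to homotopy groups but by assembling the maps $(-)^{s_\alpha}\colon\Fr_\alpha\to\Frqf$ and the interpolating homotopies $(-)^{\lambda s+(1-\lambda)x}$ into morphisms out of $\mathrm{Tel}(\Fr_*)$ and $\mathrm{Tel}(\Frqf_*)$, using convex combinations $\sum_i\lambda_i s_{\alpha_i}$ along chains $\alpha_0>\dots>\alpha_r$ to interpolate between the different choices; one then concludes by showing the compositions $\mathrm{Tel}(\Fr)\to\Frqf\to\Fr$ and $\mathrm{Tel}(\Frqf)\to\mathrm{Tel}(\Fr)\to\Frqf$ are $\A^1$-homotopic to the canonical maps, which are $\A^1$-equivalences by Lemma~\ref{lm:OSetTel}. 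Net effect: you have the right ingredients and the right outline, but the passage from ``for each $\varphi$ pick a good $s$'' to a genuine morphism of simplicial presheaves requires the explicit telescope construction, and your $\pi_n^{\A^1}$ framing, while not wrong in spirit, would still need essentially that construction to be carried out.
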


\begin{definition}
Let $F$ be a sheaf of pointed sets.
For a variety $X$ denote by $F\times X$
the (naive) schemewise product of sheaves,
$(F\times X)(S) = F(S)\times \mathrm{Map}(S,X)$,$S\in \Sm_k$.
In particular denote by $F\times \A^1$
the presheaf given by
$$(F\times \A^1)(S) = F(S)\times \mathcal O(S), S\in \Sm_k.$$ 
\end{definition}
Let
$F=F_\infty\supset \dots F_l\supset F_{l-1}\supset\dots \supset F_0$
be a filtration of the presheaf $F$ and denote by $i_l\colon F_{l}\to F_{l+1}$ the canonical inclusion.
Define a telescope of the filtration $F_*$ as the 
pointed presheaf 
\begin{equation}\label{eq:F_Filtration}
\mathrm{Tel}(F_*)=
\dots \amalg (F_{l+1}\times \A^1)\amalg_{F_{l}} (F_{l}\times \A^1) \amalg_{F_{l-1}}\amalg\dots\amalg_{F_0}(F_0\times \A^1)
\end{equation}
where the coproducts are defined with respect to 
the inclusions 
$i_l\times 1\colon F_l\times 1\to F_{l+1}\times\A^1$,
$id_{F_l}\times 0\colon F_l\times 0\to F_{l}\times\A^1$.

\begin{lemma}
For any pointed presheaf $F$ with the filtration \eqref{eq:F_Filtration} the canonical morphism
$\mathrm{Tel}(F)\to F$
is an $\A^1$-homotopy equivalence
\end{lemma}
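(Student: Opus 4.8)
The plan is to realise $\mathrm{Tel}(F_*)$ as a sequential colimit of truncated telescopes and to reduce to the finite case. Write $T_N$ for the finite telescope $(F_N\times\A^1)\amalg_{F_{N-1}}(F_{N-1}\times\A^1)\amalg_{F_{N-2}}\dots\amalg_{F_0}(F_0\times\A^1)$; the evident maps $T_N\hookrightarrow T_{N+1}$ (attaching the new block $F_{N+1}\times\A^1$ along the face $F_N\times\{0\}$) are monomorphisms and $\mathrm{Tel}(F_*)=\colim_N T_N$. Let $p\colon\mathrm{Tel}(F_*)\to F$ be the canonical morphism and $p_N\colon T_N\to F_N$ its restriction (it lands in $F_N$ because $F_l\subseteq F_N$ for $l\le N$); the $p_N$ are compatible with the two chains of inclusions, so $p=\colim_N p_N$. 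Finally let $s_N\colon F_N=F_N\times\{0\}\hookrightarrow F_N\times\A^1\hookrightarrow T_N$ be the obvious section, so that $p_N\circ s_N=\mathrm{id}_{F_N}$.

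The geometric content is that $T_N$ is nothing but $F_N\times\A^1$ with a collar $F_l\times\A^1$ glued on, at the $F_l$-part of its $\{1\}$-end along the face $F_l\times\{0\}$, for each $l<N$; since each such collar is attached along the $\A^1$-trivial cofibration $F_l\times\{0\}\hookrightarrow F_l\times\A^1$, it does not change the $\A^1$-homotopy type. I would make this explicit by exhibiting $s_N\circ p_N\simeq_{\A^1}\mathrm{id}_{T_N}$ as a composite of $N+1$ elementary $\A^1$-homotopies $H^{(0)},\dots,H^{(N)}$, where $H^{(i)}$ fixes the blocks $F_{i+1}\times\A^1,\dots,F_N\times\A^1$ pointwise and collapses $F_i\times\A^1$ onto its $\{0\}$-end by $(y,t,u)\mapsto(y,(1-u)t)$. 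One checks that each $H^{(i)}$ is well defined on $T_N$: the only point is compatibility along the gluing loci, and there the formula keeps the glued face $\{t=0\}$ pointwise fixed, while the block just below, already swept into this face by the earlier homotopies, is carried along consistently. After $H^{(0)},\dots,H^{(i)}$ the blocks $F_0\times\A^1,\dots,F_i\times\A^1$ have all been collapsed into $F_{i+1}\times\A^1$, so after $H^{(N)}$ everything sits in $F_N\times\{0\}$, which is exactly $s_N\circ p_N$. (Alternatively one may avoid writing the homotopies and argue that $T_N$ is obtained from $F_N\times\A^1$ by iterated pushouts along $\A^1$-trivial cofibrations, so $T_N\to F_N\times\A^1\to F_N$ is an $\A^1$-equivalence.) Either way $p_N$ is an $\A^1$-equivalence for every $N$.

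It then remains to pass to the colimit. All transition maps $T_N\hookrightarrow T_{N+1}$ and $F_N\hookrightarrow F_{N+1}$ are monomorphisms, hence cofibrations in the (injective-local, $\A^1$-local) model structure on $\sShv_\bullet$, so both sequential colimits compute homotopy colimits; since $(p_N)_N$ is a levelwise $\A^1$-equivalence, the induced morphism on colimits $p\colon\mathrm{Tel}(F_*)\to F$ is an $\A^1$-equivalence, i.e.\ an isomorphism in $\mathbf{H}_\bullet(k)$. The main obstacle is precisely this last step: one cannot collapse all the collars by a single $\A^1$-homotopy — the gluing constraints of adjacent collars are incompatible, which is why in the finite case the retraction must be performed one collar at a time, and for $\mathrm{Tel}(F_*)$ itself there is no global deformation retraction — so one has to invoke that a sequential colimit of $\A^1$-equivalences along monomorphisms is again an $\A^1$-equivalence.
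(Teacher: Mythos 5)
Your proof is correct and follows essentially the same route as the paper's: truncate the telescope, show each finite stage $T_N\to F_N$ is an $\A^1$-equivalence (you flesh this out with the collar-collapse homotopies and the pushout-along-trivial-cofibration alternative, both of which the paper leaves implicit), and then pass to the filtered colimit, noting that sequential colimits along monomorphisms preserve $\A^1$-equivalences. The only cosmetic discrepancy is that the gluing in the paper's definition is via $i_l\times 1\colon F_l\to F_{l+1}\times\A^1$ and $\mathrm{id}\times 0\colon F_l\to F_l\times\A^1$, so the blocks are attached at opposite ends from the ones you name; this does not affect the argument.
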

\begin{proof}
Clearly for any presheaf $F$ the canonical morphism $F\times \A^1\to F$ is an $\A^1$-equivalence.
Denote
$\mathrm{Tel}^l(F) = 
(F_{l}\times \A^1)\amalg_{F_{l-1}} (F_{l-1}\times \A^1) \amalg_{F_{l-2}}\amalg\dots\amalg_{F_0}(F_0\times \A^1)
.$
It follows that the canonical morphism $\mathrm{Tel}^l(F)\to F$ is an $\A^1$-equivalence.

Since 
the injective limits preserves weak equivalences on simplicial sets, and the functor $\underline{Hom}(\Delta^\bullet,-)$ commutes with the injective limits, it follows that injective limits preserve $\A^1$-equivalences.
Thus the claim follows since 
$F=\varinjlim_l F_l$ and $\mathrm{Tel}(F_*)=\varinjlim_l\mathrm{Tel}^l(F_*)$.
\end{proof}

We define the notion of the telescope for filtrations indexed by an arbitrary filtering
ordered set. 

Let $A$ is an ordered set and $F_\alpha\subset F$, $\alpha\in A$, be a filtration on $ A$,
i.e. 
$F=\bigcup_{\alpha\in  A} F_\alpha$
and
$F_\alpha\subset F_\beta$ for $\alpha<\beta$. 
Define a telescope of the filtration $F_*$ by the following.

Consider the ordered set $\mathcal CA$ of finite linearly ordered subsets of $A$, i.e. 
the category with objects being the sets $s=(\alpha_0>\alpha_1>\alpha_2>\dots>\alpha_n)$ 
and a unique morphism $s_1\to s_2$ if $s_1$ is a subset of $s_2$.
For any $n$-dimensional simplex $s=(\alpha_0>\alpha_1>\alpha_2>\dots>\alpha_n)$ in $\mathcal SA$
define $F_s = F_{\alpha_0}$. 
We get the functor $\mathcal CA$ to the category of subpresheaves of $F$.
Now we put
$$
\mathrm{Tel}(F_*) =\varinjlim\limits_{s\in \mathcal CA} F_s\times\Delta^{\#s},
$$
where $\Delta^n$ denote the affine $n$-dimensional simplex, 
the morphisms $F_{s_1}\times\Delta^{\#s_1}\to F_{s_2}\times\Delta^{\#s_2}$ 
are given by the product of the morphisms $F_{s_1}\to F_{s_2}$ and $\Delta^{\#s_1}\to \Delta^{\#s_2}$, and the last one is the face map corresponding to the inclusion $s_1\subset s_2$. 

\begin{lemma}\label{lm:OSetTel}
For any presheaf $F$ with the filtration \eqref{eq:F_Filtration} indexed by the filtering set $A$ the canonical morphism
$\mathrm{Tel}(F)\to F$
is an $\A^1$-homotopy equivalence.
\end{lemma}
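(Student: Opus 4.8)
The plan is to imitate the proof of the preceding lemma, replacing the linear telescope by the conical one. First I would reduce to the case in which $A$ has a greatest element. For $\alpha\in A$ put $A_{\leq\alpha}=\{\beta\in A\mid\beta\leq\alpha\}$; since $A$ is filtering, these sets form a filtered family of down-closed subsets with $A=\bigcup_\alpha A_{\leq\alpha}$, and every finite chain of $A$ lies in some $A_{\leq\alpha}$. Hence $\mathcal CA=\bigcup_\alpha\mathcal C(A_{\leq\alpha})$, and $\mathrm{Tel}(F_*)=\varinjlim_\alpha\mathrm{Tel}(F_*|_{A_{\leq\alpha}})$, $F=\varinjlim_\alpha F_\alpha$ are filtered colimits, compatibly with the canonical morphisms to $F$. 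As recalled in the proof of the preceding lemma, filtered colimits commute with $\underline{Hom}(\Delta^\bullet,-)$ and preserve simplicial weak equivalences, hence preserve $\A^1$-equivalences; so it is enough to treat the case $A=A_{\leq\omega}$ for a greatest element $\omega$, in which case $F=F_\omega$.

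For such $A$ I would exhibit an $\A^1$-deformation retraction of $\mathrm{Tel}(F_*)$ onto the summand $F_{(\omega)}\times\Delta^{\#(\omega)}=F_\omega\times\Delta^1$. The engine is the ``cone on $\omega$'' endofunctor $c\colon\mathcal CA\to\mathcal CA$, $c(s)=s\cup\{\omega\}$, with the chain inclusions $s\subseteq c(s)$ and $(\omega)\subseteq c(s)$; note $F_{c(s)}=F_\omega\supseteq F_s$. On the summand $F_s\times\Delta^{\#s}$ one views $\Delta^{\#s}$ as the face of $\Delta^{\#c(s)}$ attached to $s\subseteq c(s)$, and slides it by a straight-line (hence $\A^1$-) homotopy onto the face attached to $(\omega)\subseteq c(s)$ --- concretely, sweeping the mass of all vertices labelled by $s$ onto the vertex labelled by $\omega$ --- while keeping the $F$-coordinate inside $F_\omega$; at time $1$ the summand lands inside the $(\omega)$-summand. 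Since every face map between the $\Delta^\bullet$'s is affine and $c$ is functorial in $s$, these partial homotopies agree on the face-map gluings of $\mathrm{Tel}(F_*)=\varinjlim_{s}F_s\times\Delta^{\#s}$ and fix the base point, so they assemble to a pointed $\A^1$-homotopy between $\mathrm{id}_{\mathrm{Tel}(F_*)}$ and the composite $\mathrm{Tel}(F_*)\to F_\omega\times\Delta^1\hookrightarrow\mathrm{Tel}(F_*)$. Together with the tautological $\A^1$-equivalence $F_\omega\times\Delta^1\to F_\omega=F$ and the commutativity of the resulting triangle over $F$, this gives that $\mathrm{Tel}(F_*)\to F$ is an $\A^1$-homotopy equivalence.

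I expect the main obstacle to be exactly this last verification: that the ``sweep toward $\omega$'' homotopies are strictly compatible with all face-map identifications and with the inclusions $F_{s_1}\hookrightarrow F_{s_2}$, so that they descend to the colimit, and that at time $1$ everything really collapses into the single summand indexed by $(\omega)$. A more conceptual way to organize the same computation is to observe that $\mathrm{Tel}(F_*)$ is a model of the homotopy colimit over $\mathcal CA$ of $s\mapsto F_{\max s}$, and that $\max\colon\mathcal CA\to A$ is homotopy cofinal: for each $\alpha\in A$ the comma category of chains $s$ with $\max s\geq\alpha$ retracts, via $s\mapsto s\cap A_{\geq\alpha}$, onto the chain poset $\mathcal C(A_{\geq\alpha})$, which is contractible since $A_{\geq\alpha}$ has least element $\alpha$ and so $\mathcal C(A_{\geq\alpha})$ is coned off by $t\mapsto t\cup\{\alpha\}$. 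This yields $\mathrm{Tel}(F_*)\simeq\mathrm{hocolim}_{\alpha\in A}F_\alpha$, which equals $\varinjlim_{\alpha\in A}F_\alpha=F$ because $A$ is filtering. I would write up whichever of these two formulations is less cumbersome to make fully rigorous in the present presheaf-theoretic setting.
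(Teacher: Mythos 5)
Your proof is correct and follows essentially the same strategy as the paper: a filtered-colimit reduction combined with a cone contraction toward the greatest-element summand, with the observation that filtered colimits commute with $\underline{Hom}(\Delta^\bullet,-)$ and preserve weak equivalences. The only difference is organizational (you reduce to the case of a greatest element first, whereas the paper contracts each $\mathrm{Tel}^\alpha(F)$ and then passes to the colimit) and that you spell out the "cone on $\omega$" retraction, which the paper leaves implicit in the remark that $F\times\Delta^n\to F$ is an $\A^1$-equivalence.
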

\begin{proof}
Clearly for any presheaf $F$ the canonical morphism $F\times \Delta^n\to F$ is an $\A^1$-equivalence.
For any $\alpha\in A$ denote by
$$
\mathrm{Tel}^\alpha(F_*) =\varinjlim\limits_{s\in \mathcal CA_{<\alpha}} F_s\times\Delta^{\#s},
$$
where
$\mathcal CA_{<\alpha}$ is the category of 
$\mathcal CA$ spanned by the objects $s=(\alpha_0>\alpha_1>\dots\alpha_n)$ with $\alpha_0\leq \alpha$. 
It follows from the above that the canonical morphism $\mathrm{Tel}^\alpha(F)\to F$ is an $\A^1$-equivalence.

Since weak equivalences on simplicial sets are preserved under the injective limits, and $\underline{Hom}(\Delta^\bullet,-)$ commutes with the injective limits, it follows that injective limits preserves $\A^1$-equivalences.
Thus the claim follows from the commutative diagrams, $\alpha\in A$,
$$\xymatrix{
\mathrm{Tel}^{\alpha}(F)\ar[d]\ar[r] & F_\alpha\ar[d]\\
\mathrm{Tel}(F)\ar[r] & F
}$$
since 
$\mathrm{Tel}(F)=\varinjlim_\alpha \mathrm{Tel}^{\alpha}(F)$,
$F = \varinjlim_\alpha F_\alpha$.
\end{proof}

\begin{lemma}\label{lm:agrreedPhi^s_maps}
For any finite set of sections $\alpha\subset \Fr$ there is a section $s\in \Gamma(\mathbb P^n_X,\mathcal O(d))^n$
such that
\[\alpha^{s_{\alpha}}\subset \Frqf_n(S, X/U ),
\beta^{\lambda s_{\alpha}+(1-\lambda) x}\in  \Frqf_n(S\times\A^1, X/U ),\]
where $\beta = \alpha \cap \Frqf_n(S, X/U )$,
 $x=(x_i x_\infty^{d-1})$, and 
see  def. \ref{def:Phi^s} and def. \ref{def:Phi^lambdas} for the maps $(-)^{s}$ and $(-)^{\lambda s_{\alpha}+(1-\lambda) x}$.
\end{lemma}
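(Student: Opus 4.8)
The plan is to apply Proposition~\ref{prop:hom-to-qf} to each member of the finite family $\alpha$ and then to produce a single section working for all of them by a genericity argument inside $\Gamma_d$. Since $\alpha$ is finite we may (stabilising levels if necessary) assume that its elements all lie at one level $n$; write $\alpha=\{\varphi_1,\dots,\varphi_r\}\subset\Fr_n(S,X/U)$ and order it so that $\beta=\alpha\cap\Frqf_n(S,X/U)=\{\varphi_1,\dots,\varphi_t\}$. Throughout put $x=(x_1x_\infty^{d-1},\dots,x_nx_\infty^{d-1})$.

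First I would record what Proposition~\ref{pr:AlmAllQF} yields for each $\varphi_i$ when invoked with $l=2$: an integer $d_i$ such that for every $d>d_i$ there is an open $\mathcal U_i\subset\Gamma_d$ with $\codim((\Gamma_d\setminus\mathcal U_i)/\Gamma_d)\geq 2$ and with $\varphi_i^{s}\in\Frqf_n(S,X/U)$ for every rational point $s\in\mathcal U_i$; and, by the ``moreover'' part of that proposition, if $\varphi_i\in\beta$ we may in addition arrange $x\in\mathcal U_i$. For the members of $\beta$ I would then repeat the end of the proof of Proposition~\ref{prop:hom-to-qf}(b): since $x\in\mathcal U_i$, the linear homotopy satisfies $\varphi_i^{\lambda s+(1-\lambda)x}\in\Frqf_n(S\times\A^1,X/U)$ for every rational point $s$ outside the closed set $C_i=\overline{\pr_x^{-1}(\pr_x(\Gamma_d\setminus\mathcal U_i))}$, where $\pr_x$ is the linear projection with centre $x$; as $C_i$ is a cone with vertex $x$ over $\pr_x(\Gamma_d\setminus\mathcal U_i)$, we get $\codim(C_i/\Gamma_d)\geq\codim((\Gamma_d\setminus\mathcal U_i)/\Gamma_d)-1\geq 1$.

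Next I would fix $d$ once and for all, taking any $d>\max_i d_i$; such a $d$ exists because $\alpha$ is finite and, by Lemma~\ref{lm:codimcalUGammad}, the bound $M(m)$ involved in $d_i$ depends only on $m$ (hence only on $l=2$ and $\dim_k S$), not on the correspondence. Then set
\[
\mathcal U=\Bigl(\bigcap_{i=1}^{r}\mathcal U_i\Bigr)\cap\Bigl(\bigcap_{i=1}^{t}(\Gamma_d\setminus C_i)\Bigr)\subset\Gamma_d.
\]
Each $\mathcal U_i$ and each $\Gamma_d\setminus C_i$ is a nonempty open subset of $\Gamma_d$ by the codimension estimates above, and $\Gamma_d$ is irreducible, being a coset of a $k$-linear subspace of the $k$-vector space $\Gamma(\PP^n\times X,\mathcal O(d))^n$ cut out by the linear restriction conditions of its definition; hence $\mathcal U$ is again a nonempty open subset of $\Gamma_d$. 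Since $k$ is infinite, $\mathcal U$ contains a rational point, and taking $s_\alpha$ to be such a point gives $\varphi_i^{s_\alpha}\in\Frqf_n(S,X/U)$ for all $i\leq r$ and $\varphi_i^{\lambda s_\alpha+(1-\lambda)x}\in\Frqf_n(S\times\A^1,X/U)$ for all $i\leq t$, as required.

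I do not expect a genuine obstacle; the two points to be careful about are bookkeeping. The first is that $d$ must be chosen uniformly over the finite family, which is immediate once one observes that the bound in Lemma~\ref{lm:codimcalUGammad} does not depend on the particular correspondence. The second --- and the reason Proposition~\ref{pr:AlmAllQF} is invoked with $l=2$ rather than $l=1$ --- is that the two open conditions on $s$ (making $\varphi_i^{s}$ quasi-finite, and keeping the linear homotopy quasi-finite for $\varphi_i\in\beta$) must be simultaneously satisfiable: passing from $\Gamma_d\setminus\mathcal U_i$ to the cone $C_i$ drops the codimension by one, so the codimension-$2$ estimate is exactly what keeps $\Gamma_d\setminus C_i$ a nonempty open and the generic $s$ available in it.
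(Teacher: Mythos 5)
Your proof is correct and fills in exactly the detail the paper leaves implicit behind the bare reference to Proposition~\ref{prop:hom-to-qf}: you correctly return to Proposition~\ref{pr:AlmAllQF} with $l=2$, intersect the resulting nonempty open loci $\mathcal U_i$ together with the complements of the cones $C_i$ over $x$ inside the irreducible affine space $\Gamma_d$, and produce a common rational point since $k$ is infinite. The one aside to retract is the claim that the bound $M(m)$ of Lemma~\ref{lm:codimcalUGammad} is independent of the correspondence --- its proof runs through the specific neighbourhood $\mathcal V$ of the given $\varphi$ --- but this is harmless since you already fix $d>\max_i d_i$ over the finite family.
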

\begin{proof}
The claim follows from proposition \ref{prop:hom-to-qf}.
\end{proof}

\begin{proof}[Proof of proposition \ref{prop:FrqfFrAEq}]

Denote the pointed sheaves $\Fr=\Fr_n(X/U)$ and $\Frqf=\Frqf_n(X/U)$

Consider the filtering set $A$ of finite sets $\alpha$ of sections $(\Phi_i)_{i\in \alpha}$,  of sheaf the $\Fr$.
So for each $\alpha \in A$ we canonically have a set of correspondences
\[
\Phi_i\in \Fr_n(S_i, X/U), S_i\in \Sm_k, i\in \alpha. \]
Note that $A$ is a set, since the category $\Sm_k$ is small.
Define  $\Fr_{\alpha}\subset \Fr$
as the smallest subpresheaves of $\Fr$ 
containing $\alpha$.
Define 
$\Frqf_{\alpha}=\Frqf\cap \Fr_\alpha$.
Then we have the pair of filtrations 
$$
\Fr=\varinjlim_{\alpha\in A} \Fr_{\alpha},\;
\Frqf=\varinjlim_{\alpha\in A} \Frqf_{\alpha}
$$

Lemma \ref{lm:agrreedPhi^s_maps} gives us the morphisms
\begin{gather*}\label{eq:Phi^s_maps}
(-)^{s_{\alpha}}\colon \Fr_{\alpha}\to \Frqf, \\
(-)^{\lambda s_{\alpha}+(1-\lambda) x}\colon \Fr_{\alpha}\times\A^1\to \Fr,\,
(-)^{\lambda s_{\alpha}+(1-\lambda) x}\colon \Frqf_{\alpha}\times\A^1\to \Frqf.
\end{gather*}

Then we consider the telescopes of the filtrations on $\Fr$ and $\Frqf$ (see the discussion above for the definition), 
and extend \eqref{eq:Phi^s_maps} to morphisms of pointed simplicial sheaves
\begin{gather*}
\mathrm{Tel}(\Fr_*) \to \Frqf_*,\\\label{eq:Phi^s_mapshomotopies}
\mathrm{Tel}(\Fr_*) \times \A^1 \to \Fr_*,
\mathrm{Tel}(\Frqf_*) \times \A^1 \to \Frqf_*.
\end{gather*}
To do this for an arbitrary linearly ordered subset $\{\alpha_0,\dots \alpha_r\}\subset A$ we define morphisms
\begin{equation*}\label{eq:Phi^s_mapsDelts}\begin{array}{llll}
\Fr_{\alpha}\times\Delta^{r} &\to& \Frqf ,&  \\
(c, (\lambda_0,\dots \lambda_{r})) & \mapsto & c^{s}, & s= \sum\limits_{i=0}^r \lambda_i s_{\alpha_i},
\end{array}\end{equation*}
and 
\begin{equation*}\label{eq:Phi^s_mapshomotopiesDelta}\begin{array}{llllll}
\Fr_{\alpha}\times\A^1 \times\Delta^{r}&\to& \Fr, &
\Frqf_{\alpha}\times\A^1 \times\Delta^{r}&\to& \Frqf, 
\\
(c, \lambda, (\lambda_0,\dots \lambda_{r})) & \mapsto & c^{\lambda s+(1-\lambda) x}, & 
(c, \lambda, (\lambda_0,\dots \lambda_{r})) & \mapsto & c^{\lambda s+(1-\lambda) x}.
\end{array}\end{equation*}

Then due to the homotopies \eqref{eq:Phi^s_mapshomotopies}
the compositions 
$$
\mathrm{Tel}(\Fr) \to \Frqf_*\to \Fr,\;
\mathrm{Tel}(\Frqf) \to \mathrm{Tel}(\Fr)\to \Frqf_*
$$
are $\A^1$-homotopy equivalent to the canonical maps 
\begin{equation}\label{eq:canmapTelFr}\mathrm{Tel}(\Fr)\to \Fr, \mathrm{Tel}(\Frqf) \to \Frqf\end{equation}
Thus since by lemma \ref{lm:OSetTel} the canonical morphisms \eqref{eq:canmapTelFr} are $\A^1$-equivalences,
the $\A^1$-equivalence
$\Frqf\simeq \Fr$
follows.
\end{proof}

Next we improve the Cone Theorem \ref{th:ConeTh} to the case of products of pairs.
\begin{corollary}\label{cor:ConeThPairProd}
Let $X$ and $Y$ be smooth affine $k$-schemes, and $U\subset X$ and $V\subset Y$ be open subschemes.

Then the canonical morphism 
\[\Fr( X/U \wedge (Y//V) ) \to \Fr( X/U\wedge (Y,V) )\]
is a motivic equivalence of pointed simplicial Nisnevich sheaves.
At the left side $(X,U)\times \mathcal Y$ denotes the simplicial object in $\Sm^\mathrm{pair}_\bullet(k)$
given by the open immersion of simplicial schemes
$U\times \mathcal Y\to X\times \mathcal Y$, and see def. \ref{def:FrcalX/calU}.
\end{corollary}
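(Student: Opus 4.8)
The plan is to deduce the statement from the Cone Theorem \ref{th:ConeTh} applied to the product pair, together with a description of the smash $X/U \wedge (Y//V)$ as an iterated simplicial cone. First I would observe that by the definition of the smash of pairs \eqref{eq:(X1,U1)wedge(X2,U2)}, the pair $(X,U)\wedge(Y,V)$ equals $(X\times Y,\ X\times V\cup U\times Y)$, which is again a smooth affine pair since $X\times Y$ is affine and $X\times V\cup U\times Y$ is open. Hence Theorem \ref{th:ConeTh} applies directly to this pair, giving a motivic equivalence
\[
\Fr\bigl((X\times Y)//(X\times V\cup U\times Y)\bigr)\to \Fr\bigl(X/U\wedge(Y,V)\bigr).
\]
So it suffices to identify the source with $\Fr(X/U\wedge(Y//V))$ up to motivic (in fact Nisnevich local, or even simplicial) equivalence.

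The key step is therefore a purely combinatorial/simplicial comparison: the pointed simplicial sheaf $\Fr(X/U\wedge(Y//V))$, where $(Y//V)$ is the simplicial cone from def.~\ref{def:SimplCone} and the smash of a pair with a simplicial pair is taken levelwise in $\Sm^\mathrm{pair}_\bullet(k)$, should be identified with (or related by a zig-zag of simplicial equivalences to) $\Fr((X\times Y)//(X\times V\cup U\times Y))$. The cleanest route is to pass through the factor-presheaves: by the remark following Theorem \ref{th:intr:ConeTh} and corollary \ref{cor:FrPairFreqNisEq}, $\Fr(\mathcal X//\mathcal U)$ is simplicially equivalent to $\Fr(\mathcal X)/\Fr(\mathcal U)$ for the relevant simplicial pairs, so I would reduce to checking that
\[
\Fr(X\times Y)/\Fr(X\times V\cup U\times Y)\quad\text{and}\quad
\Fr(X/U)\wedge\bigl(\Fr(Y)/\Fr(V)\bigr)
\]
agree as pointed simplicial sheaves after Nisnevich localisation — and this in turn follows from the Mayer--Vietoris / excision statement corollary \ref{cor:MayerVietoris} applied to the covering of $X\times Y$ by $X\times Y$ itself relative to the open $X\times V\cup U\times Y$, combined with the evident levelwise identification on the cone models. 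One should be a little careful: the two ways of forming the smash (first smashing pairs then taking the cone, versus taking the cone in one factor then smashing) differ by a reindexing of simplices, but they are related by the standard diagonal/shuffle equivalence of bi-simplicial objects, which is a schemewise simplicial weak equivalence.

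The main obstacle I expect is precisely this last bookkeeping: keeping track of basepoints and of which open subscheme plays the role of the "$U$" in each cone, and verifying that the natural map from $\Fr(X/U\wedge(Y//V))$ to $\Fr(X/U\wedge(Y,V))$ really factors through (a representative of) $\Fr((X\times Y)//(X\times V\cup U\times Y))$ compatibly with the map of Theorem \ref{th:ConeTh}. Once the diagram
\[
\xymatrix{
\Fr(X/U\wedge(Y//V))\ar[r]\ar[d]_{\simeq} & \Fr(X/U\wedge(Y,V))\ar@{=}[d]\\
\Fr\bigl((X\times Y)//(X\times V\cup U\times Y)\bigr)\ar[r]^-{\text{Thm }\ref{th:ConeTh}} & \Fr\bigl((X\times Y)/(X\times V\cup U\times Y)\bigr)
}
\]
is shown to commute with the left vertical arrow a motivic equivalence (built from corollaries \ref{cor:FrPairFreqNisEq} and \ref{cor:MayerVietoris} and the diagonal equivalence of bi-simplicial sheaves), the two-out-of-three property for motivic equivalences finishes the proof. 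Everything else — affineness of $X\times Y$, openness of the union, the levelwise smash of a pair with a simplicial pair — is routine and needs only to be recorded.
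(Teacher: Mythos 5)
There is a genuine gap, and it lies exactly where you flag the "main obstacle" as bookkeeping. Your step~1 — applying Theorem~\ref{th:ConeTh} to the product pair $(X\times Y,\ U\times Y\cup X\times V)$ — matches the paper. The problem is your step~2: you claim the comparison between $\Fr(X/U\wedge(Y//V))$ and $\Fr\bigl((X\times Y)//(U\times Y\cup X\times V)\bigr)$ is a "purely combinatorial/simplicial" matter, achievable by factor-presheaves, Mayer--Vietoris, and shuffle/diagonal bookkeeping. It is not. The reduction you write down, identifying $\Fr(X/U\wedge(Y//V))$ with $\Fr(X/U)\wedge\bigl(\Fr(Y)/\Fr(V)\bigr)$, is false as stated: the functor $\Fr$ does not commute with smash products or quotients of its arguments — that failure is precisely what makes the Cone Theorem a theorem and not a tautology. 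The simplicial equivalence $\Fr(\mathcal X//\mathcal U)\simeq\Fr(\mathcal X)/\Fr(\mathcal U)$ from the remark after Theorem~\ref{th:intr:ConeTh} lets you open up the cone factor $Y//V$, but in each simplicial level you are still left with $\Fr$ applied to a genuine \emph{pair} $X/U\wedge(Y//V)_n$, and there is no combinatorial way to pull $X/U$ out of the inside of $\Fr$.

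The paper's proof therefore uses Theorem~\ref{th:ConeTh} \emph{twice}, not once. After applying it to the product pair (your step~1), one first uses Corollary~\ref{cor:MayerVietoris} to compare $\Fr\bigl((X\times Y)//(U\times Y\cup X\times V)\bigr)$ with $\Fr\bigl((X//U)\wedge(Y//V)\bigr)$ — this part really is combinatorial/Nisnevich-local, exploiting that the open $U\times Y\cup X\times V$ is a union covered by $U\times Y$ and $X\times V$ with intersection $U\times V$. But then one needs a second, nontrivial application of Theorem~\ref{th:ConeTh}, levelwise over the cone $Y//V$, to the affine pairs $(X\times Y,\,U\times Y)$ and $(X\times V,\,U\times V)$, to show that $\Fr\bigl(X/U\wedge(Y//V)\bigr)\to\Fr\bigl((X//U)\wedge(Y//V)\bigr)$ is a motivic equivalence. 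This step goes through the moving lemma again and cannot be replaced by simplicial bookkeeping. Your proposal omits it, so the argument as written does not close. To repair it, replace your "evident levelwise identification on the cone models" with an explicit intermediate term $\Fr\bigl((X//U)\wedge(Y//V)\bigr)$, use Mayer--Vietoris to relate it to $\Fr\bigl((X\times Y)//(U\times Y\cup X\times V)\bigr)$, and invoke Theorem~\ref{th:ConeTh} levelwise for the remaining comparison.
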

\begin{proof}
The claim follows form three equalities: 

1) By theorem \ref{th:ConeTh} we have the motivic equivalence of pointed simplicial sheaves
\[\Fr( X\times Y // (U\times Y\cup X\times V) ) \to \Fr( X/U\wedge Y/U )\]

2) By corollary \ref{cor:MayerVietoris} we have the Nisnevich local equivalence
$\Fr( U\times Y \amalg^\sim_{(U\times V)} X\times V )  \to \Fr( U\times Y\cup X\times V )$, see def. \ref{def:hocoeq} for $\amalg^\sim$.
Hence
\[\Fr( (X//U)\wedge (Y//U) ) \to \Fr( X\times Y // (U\times Y\cup X\times V) )\]
is a Nisnevich local equivalence.

3) By theorem \ref{th:ConeTh} again the morphism 
\[\Fr( X/U \wedge (Y//V) )\to \Fr( (X//U)\wedge (Y//V) )\]
is a motivic equivalence, since morphisms
\[\begin{array}{lll}
\Fr( X/U \wedge Y ) &\to& \Fr( (X//U)\times Y ),\\
\Fr( X/U \wedge V ) &\to& \Fr( (X//U)\times V )
\end{array}\]
are of such type.
%
%
\end{proof}

Now we apply the Cone Theorem (theorem \ref{th:ConeTh} and corollary \ref{cor:ConeThPairProd}) to get the computational results 
in the stable motivic homotopy category announced in the introduction. 
Assume that the base filed $k$ is perfect.

\begin{corollary}\label{cor:MSfreqiv}
Consider the canonical morphism of $S^1$-spectra of pointed simplicial Nisnevich sheaves
\begin{equation}\label{eq:Mfr(x//U)Mfr(X,U)}\Mfr(X//U)\to \Mfr(X,U),\end{equation}
see def. \ref{def:MfrMPMGfrAppensix} for $\Mfr$, and def. \ref{def:SimplCone} for $X//U$.
Then for any smooth affine $X$ and open $U$ the morphism 
\eqref{eq:Mfr(x//U)Mfr(X,U)}
is a levelwise Nisnevich local equivalence in positive degrees.
\end{corollary}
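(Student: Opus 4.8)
The plan is to prove the statement level by level and to reduce it to the (unstable) Cone Theorem \ref{th:ConeTh}. For $i\geq 1$, the level-$i$ component of \eqref{eq:Mfr(x//U)Mfr(X,U)} is obtained by applying the functor $\mathcal Z\mapsto C^*\Fr(-,\mathcal Z\wedge S^i)$ to the canonical projection of pointed simplicial sheaves $X//U\to X/U$, where $\Fr(-,\mathcal Z\wedge S^i)$ denotes the diagonal of the bisimplicial sheaf obtained by applying $\Fr$ in each simplicial degree of $\mathcal Z\wedge S^i$. Using that $(X//U)\wedge S^i=(X\wedge S^i)//(U\wedge S^i)$ as simplicial schemes, that $C^*$ commutes with diagonals and finite colimits, and that the diagonal preserves levelwise equivalences, I would reduce the claim to a statement about $C^*\Fr(-,-)$ applied in each simplicial degree of $S^i$, where one meets only finite wedges of copies of $X$ and $U$ and of their cones.

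The first main step is to show that \eqref{eq:Mfr(x//U)Mfr(X,U)} is a levelwise motivic equivalence in all degrees. By Theorem \ref{th:ConeTh}, corollary \ref{cor:ConeThPairProd}, and the additivity recorded in corollary \ref{cor:MayerVietoris} (used to split the finite wedges occurring in the simplicial degrees of $S^i$ into affine pieces), the morphism $\Fr(-,(X//U)\wedge S^i)\to\Fr(-,(X/U)\wedge S^i)$ is a motivic equivalence of pointed simplicial sheaves for every $i\geq 0$. Since the canonical map $F\to C^*F$ is an $\A^1$-local weak equivalence for every pointed simplicial sheaf $F$, applying $C^*$ carries a motivic equivalence to a motivic equivalence; hence the level-$i$ component of \eqref{eq:Mfr(x//U)Mfr(X,U)} is a motivic equivalence.

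The second main step — and the reason for the hypothesis $i\geq 1$ — is to upgrade ``motivic'' to ``Nisnevich local'' in positive degrees. For $i\geq 1$ both $C^*\Fr(-,(X//U)\wedge S^i)_f$ and $C^*\Fr(-,(X/U)\wedge S^i)_f$ are motivically fibrant, so a motivic equivalence between them is automatically a schemewise, hence Nisnevich local, equivalence. For $X$ and for $U$ this positive-degree motivic fibrancy is Theorem 4.1 of \cite{GP_MFrAlgVar}, which applies to an arbitrary smooth scheme and in particular to $U$. For $X/U$ and $X//U$ one uses the identification $\Fr(-,(X//U)\wedge S^i)\simeq\Fr(-,X\wedge S^i)/\Fr(-,U\wedge S^i)$ (the remark after Theorem \ref{th:intr:ConeTh}, applied to the pair $(X\wedge S^i,U\wedge S^i)$): the term in question is then equivalent to the mapping cone of the level-$i$ components of $\Mfr(U)_f\to\Mfr(X)_f$, and since in positive degrees the terms of $\Mfr(-)_f$ are connective infinite-loop objects, this mapping cone is again motivically fibrant. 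I expect this last point to be the main obstacle: one must make precise that the positive-degree terms of $\Mfr(X,U)_f$ and $\Mfr(X//U)_f$ are infinite-loop objects, so that forming finite homotopy colimits of them does not leave the class of motivically fibrant objects, and one must keep the bookkeeping across the simplicial directions of $S^i$ and across wedges consistent. The remaining verifications — that $C^*$, the diagonal functor, and the injective local fibrant replacement $(-)_f$ preserve levelwise Nisnevich local equivalences, and the elementary identification of finite wedges with cones of finite coproducts of affine pairs — are routine.
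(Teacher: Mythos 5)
Your overall outline — first establish a levelwise motivic equivalence via the Cone Theorem, then upgrade to a Nisnevich local equivalence by showing the fibrant replacements are motivically fibrant in positive degrees — matches the paper's strategy. Your execution of the first step is a bit more elaborate than what the paper does (the paper simply observes that Theorem \ref{th:ConeTh} extends to the motivic equivalence $\Fr\bigl((X//U)\times K\bigr)\simeq\Fr\bigl((X/U)\times K\bigr)$ for any simplicial set $K$, using that $\Fr$ turns finite disjoint unions into products; it does not need corollary \ref{cor:ConeThPairProd} or corollary \ref{cor:MayerVietoris} here), but that part is essentially sound.

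The gap is in your second step. The paper obtains the needed motivic fibrancy directly from Proposition \ref{prop:motfibOmegaMfr(X,U)}, whose proof cites \cite[corollary 7.5]{GP_MFrAlgVar}: that result applies to the pair $(X,U)$ itself (for arbitrary smooth $X$ and open $U$), so $\Mfr(X,U)_f$ and $\Mfr(X//U)_f$ are levelwise motivically fibrant $\Omega_{S^1}$-spectra in positive degrees with no further work. Your proposal instead cites \cite[Theorem 4.1]{GP_MFrAlgVar} — which is about the $\PP^{\wedge 1}$-spectrum $\MP(X)_f$, not the $S^1$-spectrum $\Mfr(X)_f$ — applied only to the single schemes $X$ and $U$, and then attempts to transfer fibrancy to the quotient $X/U$ by claiming that the mapping cone of a morphism of motivically fibrant connective infinite-loop objects is again motivically fibrant. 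That claim is not justified: on the level of pointed simplicial sheaves, the cofiber of a map of motivically fibrant objects is not generically motivically fibrant, and arguing that it is (via group completion, Freudenthal-type estimates, or the special $\Gamma$-space structure underlying framed correspondences) is precisely the nontrivial content of the cited GP results — you correctly flag it as ``the main obstacle,'' but the proposal does not close it. The correct fix is to invoke the pair version of the GP fibrancy result (Proposition \ref{prop:motfibOmegaMfr(X,U)} / \cite[corollary 7.5]{GP_MFrAlgVar}), exactly as the paper does, which dispenses with the mapping cone argument entirely.
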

\begin{proof}
Theorem \ref{th:ConeTh} implies the motivic equivalence of pointed simplicial Nisnevich sheaves 
$\Fr( (X//U)\times K )\simeq \Fr( (X/U)\times K )$ for any simplicial set $K$. 
Hence we get the motivic equivalence 
$\MSfr(X//U)\to \MSfr(X,U)$, see def. \ref{}.
Then since $\Mfr=C^*(\MSfr)$, the morphism \eqref{eq:Mfr(x//U)Mfr(X,U)} is a motivic equivalence.
By proposition \ref{prop:motfibOmegaMfr(X,U)} 
it follows that Nisnevich local injective fibrant replacements $\Mfr(X//U)_f$ and $\Mfr(X,U)_f$
are levelwise motivically fibrant spectra in positive degrees.
So the claim follows.
\end{proof}

Now we assume in addition that the base field $k$ is perfect.
\begin{corollary}\label{cor:MPmotresolution}
The canonical morphism $\Sigma^\infty_{\mathbb P^1} (X/U)\to \MP(X,U)_f$ is the stable motivic weak equivalence of $\PP^{\wedge 1}$-spectra of pointed simplicial sheaves.
The $\PP^{\wedge 1}$-spectra $\MP(X,U)_f$ is a positively motivically fibrant $\Omega_{\mathbb P^1}$-spectrum.
See def. \ref{def:MfrMPMGfrAppensix} for $\MP$. 
\end{corollary}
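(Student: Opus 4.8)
The plan is to bootstrap from the case of smooth schemes, \cite[Th.~4.1]{GP_MFrAlgVar}, by replacing the pair $(X,U)$ everywhere by its simplicial cone $X//U$ (def.~\ref{def:SimplCone}), which is built in each simplicial degree out of the smooth affine schemes $X$ and $U$, and then invoking the Cone Theorem to pass back. This is the $\PP^1$-spectrum analogue of corollary~\ref{cor:MSfreqiv}.

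First I would produce the comparison $\MP(X//U)\to\MP(X,U)$. Each $(X/U)\wedge T^{\wedge i}$ is the pair $(X\times\A^i,\ X\times(\A^i-0)\cup U\times\A^i)$ with $X\times\A^i$ smooth affine, so by theorem~\ref{th:ConeTh} together with corollary~\ref{cor:ConeThPairProd} the morphisms $\Fr((X//U)\wedge T^{\wedge i})\to\Fr((X/U)\wedge T^{\wedge i})$ are motivic equivalences; applying $C^*$ and checking compatibility with the structure maps of the spectra, these assemble to a levelwise motivic equivalence of $\PP^1$-spectra $\MP(X//U)\to\MP(X,U)$.

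Second I would analyse $\MP(X//U)$ itself. Because $\Fr(-,X//U)$ is in each simplicial degree the quotient $\Fr(-,X)/\Fr(-,U)$ (the remark after theorem~\ref{th:intr:ConeTh}), and likewise after smashing with $T^{\wedge i}$, the $\PP^1$-spectrum $\MP(X//U)$ is in each simplicial degree a finite homotopy colimit --- a mapping cone --- of $\PP^1$-spectra of the form $\MP(W)$ with $W$ running over the smooth schemes $X\times\A^a\times\Gm^{\times b}$ and $U\times\A^a\times\Gm^{\times b}$. For each such $W$, \cite[Th.~4.1]{GP_MFrAlgVar} says $\Sigma^\infty_{\PP^1}(W)\to\MP(W)_f$ is a stable motivic weak equivalence and $\MP(W)_f$ is a positively fibrant $\Omega_{\PP^1}$-spectrum; passing to mapping cones and realisation, and using (the $\PP^1$-form of) proposition~\ref{prop:motfibOmegaMfr(X,U)} to see that $\MP(X//U)_f$ and $\MP(X,U)_f$ are still levelwise motivically fibrant in positive degrees, I would deduce that $\Sigma^\infty_{\PP^1}(X//U)\to\MP(X//U)_f$ is a stable motivic weak equivalence and $\MP(X//U)_f$ is a positively fibrant $\Omega_{\PP^1}$-spectrum. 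Since $U\hookrightarrow X$ is a monomorphism of sheaves, $X//U\to X_+/U_+$ is a simplicial weak equivalence, so $\Sigma^\infty_{\PP^1}(X//U)\simeq\Sigma^\infty_{\PP^1}(X/U)$; transporting the two conclusions along the levelwise motivic equivalence $\MP(X//U)_f\to\MP(X,U)_f$ of the first step, and using that both targets are positively motivically fibrant, gives exactly the two assertions of the corollary.

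The step I expect to be the main obstacle is the preservation of positive $\Omega_{\PP^1}$-fibrancy: forming the mapping cone of positively fibrant $\Omega_{\PP^1}$-spectra attached to smooth schemes and then taking the Nisnevich-local fibrant replacement need not, a priori, return a spectrum whose positive terms are motivically fibrant with structure maps weak equivalences onto $\Omega_{\PP^1}$ of the next term. This is what proposition~\ref{prop:motfibOmegaMfr(X,U)} supplies (for $\MP(X//U)_f$ and $\MP(X,U)_f$ alike), and it ultimately rests on Garkusha--Panin's strict homotopy invariance and cancellation for framed motives, which force the sheaves $C^*\Fr((X/U)\wedge T^{\wedge l})$, $l>0$, to become motivically fibrant after Nisnevich localisation. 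The remaining points --- compatibility of the comparison maps with the $\PP^1$-spectrum structure, and the fact that a levelwise motivic equivalence of $\PP^1$-spectra is a stable motivic weak equivalence --- are formal once the Cone Theorem of the preceding sections is in hand.
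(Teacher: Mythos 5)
Your outline is in the right spirit but diverges from the paper's proof in two substantive ways, one of which you yourself flag as the weak link and which the paper resolves differently.

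First, the paper does \emph{not} reduce to \cite[Th.~4.1]{GP_MFrAlgVar} on the smooth scheme constituents of $X//U$ and then pass through mapping cones. Instead it invokes \cite[Theorem~10.1]{GP_MFrAlgVar}, which already applies to the pointed simplicial sheaf $X//U\simeq X/U$ as a whole and delivers both statements at once for $\MP(X//U)_f$: the motivic equivalence $\Sigma^\infty_{\PP^1}(X/U)\to\MP(X//U)_f$ and positive $\Omega_{\PP^1}$-fibrancy of $\MP(X//U)_f$. This bypasses entirely the step you identify as the main obstacle. Your proposed substitute, a ``$\PP^1$-form of proposition~\ref{prop:motfibOmegaMfr(X,U)}'', does not exist in the paper: that proposition is an $S^1$-spectrum statement resting on \cite[Corollary~7.5]{GP_MFrAlgVar}, and adapting it to $\PP^1$-spectra would require the $\Omega_{\Gm}$-cancellation input that \cite[Theorem~10.1]{GP_MFrAlgVar} already packages. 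As written your fibrancy argument has a gap.

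Second, your first step glosses over a real subtlety. Theorem~\ref{th:ConeTh} applied to the affine pair $(X\times\A^i,\,X\times(\A^i-0)\cup U\times\A^i)$ gives a motivic equivalence for $\Fr$ of the simplicial cone \emph{of that whole pair}, which is not the same object as $\Fr\bigl((X//U)\wedge T^{\wedge i}\bigr)$, i.e.\ the smash of the cone $X//U$ with the pair $T^{\wedge i}$. Bridging these two requires corollary~\ref{cor:ConeThPairProd} together with Mayer--Vietoris (corollary~\ref{cor:MayerVietoris}), and compatibility with the $\PP^1$-spectrum structure maps is not automatic. The paper handles this by deliberately routing through the chain $\MGcSfr\to\MGwSfr\to\McTfr\to\MTfr\to\MPfr$: the $(\Gm,S^1)$-bi-spectrum $\MGcSfr$ is built from $\Gm^{\wedge 1}=\Gm//\{1\}$, a simplicial scheme rather than a pair, so corollary~\ref{cor:MSfreqiv} (which is really theorem~\ref{th:ConeTh} applied levelwise) applies cleanly; then remark~\ref{rem:MGwSfr=McTfr} and corollary~\ref{cor:ConeThPairProd} convert back to the pair-based $\MTfr$ and $\MPfr$. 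Your proposal treats $\MP(X//U)\to\MP(X,U)$ as a direct consequence of the Cone Theorem and hides exactly the work that this chain is doing. The underlying geometry is the same, but unless you spell out the detour through the bi-spectrum level, the comparison of $\PP^1$-spectra is incomplete.
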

\begin{proof}
By the first point of \cite[Theorem 10.1]{GP_MFrAlgVar}
we have the stable motivic weak equivalences
\begin{equation}\label{eq:SigmainfP=MP(X//U)}
\Sigma^\infty_{\PP^1}(X/U)\simeq \Sigma^\infty_{\PP^1}(X//U)\simeq \MP(X//U)_f
\end{equation}
Namely \cite[Theorem 10.1]{GP_MFrAlgVar} provides the second equivalence in the sequence above, and the first one is induced by the motivic equivalence $X//U\simeq X/U$.

By corollary \ref{cor:MSfreqiv} 
we have the levelwise Nisnevich local equivalence of $(\Gm,S^1)$-bi-spectra
\begin{equation}\label{eq:MGcS(X,U)=MGcS(X//U)}
C^*(\MGcSfr(X//U))\simeq C^*(\MGcSfr(X,U)) 
\end{equation}
see def. \ref{def:MGcSfr} for $\MGcSfr$.
Hence 
we have the levelwise Nisnevich local equivalence of $(\Gm^{\wedge 1}\wedge S^1)$-bi-spectra (see def. \ref{def:MGwSfr} for $\MGwSfr$)
\begin{equation*}
C^*(\MGwSfr(X//U))\simeq C^*(\MGwSfr(X,U))
,\end{equation*}
since the above morphism of $(\Gm^{\wedge 1}\wedge S^1)$-spectra is the diagonal of bi-spectra morphism \eqref{eq:MGcS(X,U)=MGcS(X//U)}.
Then due to the $\A^1$-local equivalences of simplicial schemes
$(\A^1//\Gm)\simeq (\mathrm{pt}//\Gm)\simeq \Gm^{\wedge 1}\wedge S^1$ 
by remark \ref{rem:MGwSfr=McTfr}
we have the levelwise Nisnevich local equivalence
\begin{equation*}
C^*(\McTfr(X//U))\simeq C^*(\McTfr(X,U))
,\end{equation*}
see def. \ref{def:McTfr} for $\McTfr$.
Now by corollary \ref{cor:ConeThPairProd} and remark \ref{rem:MTtoMPeq}
we get the levelwise Nisnevich local equivalences of $T$-spectra and $\PP^{\wedge 1}$-spectra of pointed simplicial sheaves
\begin{multline}\label{eq:MT(X,U)=MT(X//U),MP(X,U)=MP(X//U)}
C^*(\MTfr(X//U))\simeq C^*(\MTfr(X,U)),\\ C^*(\MPfr(X//U))\simeq C^*(\MPfr(X,U)),
\end{multline}
see def. \ref{def:MTfr} and def. \ref{def:MPfr} or def. \ref{def:MPfrMP}.
Hence by def. \ref{def:MPfrMP} we get the levelwise schemewise simplicial equivalence of $\PP^{\wedge 1}$-spectra of pointed simplicial sheaves,
\begin{equation}\label{eq:MP(X//U)=MP(X,U)}\MP(X//U)_f\simeq \MP(X,U)_f\end{equation} and combining with equivalence \eqref{eq:SigmainfP=MP(X//U)} 
we get the first clam of the corollary.

The second point of \cite[Theorem 10.1]{GP_MFrAlgVar} implies that $\MP(X//U)_f$ is positively motivically fibrant $\Omega$-spectrum, hence by \eqref{eq:MP(X//U)=MP(X,U)} the spectrum $\MP(X,U)_f$ is positivley motivically fibrant  as well.
\end{proof}
\begin{corollary}\label{cor:MGfrresolution}
The canonical morphism of bi-spectra \[\Sigma^\infty_{\Gm}\Sigma^\infty_{S^1}(X/U)\to \MGfr(X,U)_f\] is a stable motivic weak equivalence, and the spectrum $\MGfr(X,U)_f$ is motivically fibrant. See def. \ref{def:MfrMPMGfrAppensix} for $\MGfr(X,U)$.
\end{corollary}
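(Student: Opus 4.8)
The plan is to deduce Corollary \ref{cor:MGfrresolution} from Corollary \ref{cor:MGfrresolution}'s siblings in essentially the same way that Corollary \ref{cor:MPmotresolution} was obtained, only working with the $(\Gm^{\wedge 1}\wedge S^1)$-bi-spectrum $\MGfr$ in place of the $\PP^{\wedge 1}$-spectrum $\MP$. First I would recall that by definition $\MGfr(X,U)_f$ is the bi-spectrum \eqref{eq:intr:MGfr(X,U)f} whose $i$-th row is $\Mfr((X,U)\wedge\Gm^{\wedge i})_f$, and that each such row is, by Corollary \ref{cor:MSfreqiv} applied to the pair $(X,U)\wedge\Gm^{\wedge i}$ (whose underlying scheme $X\times\Gm^{\times i}$ is again smooth affine), levelwise Nisnevich-locally equivalent in positive degrees to $\Mfr((X//U)\wedge\Gm^{\wedge i})_f$. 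Assembling these row-equivalences along $i$ gives a levelwise Nisnevich local equivalence of bi-spectra
\[
\MGfr(X//U)_f\simeq_{\nis}\MGfr(X,U)_f
\]
in $S^1$-positive degrees, compatible with the $\Gm^{\wedge 1}$-structure maps since those are induced functorially from the smash with $\Gm^{\wedge 1}$ on pairs.

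Next I would invoke the computation of \cite{GP_MFrAlgVar} (the $\Gm$-stabilised form of \cite[Theorem 10.1]{GP_MFrAlgVar}, equivalently the $\MGfr$-statement already used implicitly via \eqref{eq:MGcS(X,U)=MGcS(X//U)}) together with the motivic equivalence $X//U\simeq X/U$ of Theorem \ref{th:ConeTh}: these give the chain of stable motivic weak equivalences
\[
\Sigma^\infty_{\Gm}\Sigma^\infty_{S^1}(X/U)\simeq\Sigma^\infty_{\Gm}\Sigma^\infty_{S^1}(X//U)\simeq\MGfr(X//U)_f ,
\]
where the first is the image of the Cone Theorem equivalence under $\Sigma^\infty_{\Gm}\Sigma^\infty_{S^1}$ and the second is the known framed-motives computation for the honest smooth scheme(s) underlying the simplicial cone. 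Composing with the equivalence of the previous paragraph yields that the canonical map $\Sigma^\infty_{\Gm}\Sigma^\infty_{S^1}(X/U)\to\MGfr(X,U)_f$ is a stable motivic weak equivalence.

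For the fibrancy claim I would argue as in Corollary \ref{cor:MSfreqiv}: by Proposition \ref{prop:motfibOmegaMfr(X,U)} each sheaf $C^*\Fr((X/U)\wedge\Gm^{\wedge i}\wedge S^j)_f$ is motivically fibrant and the $S^1$-structure maps of each row are schemewise simplicial weak equivalences in positive degrees, so each $\Mfr((X,U)\wedge\Gm^{\wedge i})_f$ is a motivically fibrant $\Omega_{S^1}$-spectrum in positive degrees; transporting the $\Gm^{\wedge 1}$-$\Omega$-spectrum property across the levelwise equivalence $\MGfr(X//U)_f\simeq\MGfr(X,U)_f$ (using that $\MGfr(X//U)_f$ is motivically fibrant as a bi-spectrum by \cite{GP_MFrAlgVar}) shows $\MGfr(X,U)_f$ is motivically fibrant in $S^1$-positive degrees. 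The main obstacle I anticipate is purely bookkeeping: making precise that the row-wise equivalences of Corollary \ref{cor:MSfreqiv} are natural in the pair and hence glue to a genuine morphism of $\Gm^{\wedge 1}$-bi-spectra respecting structure maps — this requires checking that $\Mfr$ and the fibrant replacement $(-)_f$ are applied functorially along the smash-with-$\Gm^{\wedge 1}$ maps of pairs, which is routine given the definitions in the Appendix but must be stated carefully, exactly as was done in passing from \eqref{eq:MGcS(X,U)=MGcS(X//U)} through to \eqref{eq:MT(X,U)=MT(X//U),MP(X,U)=MP(X//U)} in the proof of Corollary \ref{cor:MPmotresolution}.
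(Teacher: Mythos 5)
Your proposal follows essentially the same route as the paper: reduce to the simplicial cone $X//U$ via the levelwise Nisnevich-local equivalence established in \eqref{eq:MGcS(X,U)=MGcS(X//U)} (which you re-derive row-by-row from Corollary \ref{cor:MSfreqiv}), then invoke the framed-motives computation for $X//U$ and transport fibrancy across the equivalence. The one substantive imprecision is in the fibrancy step: the assertion that ``$\MGfr(X//U)_f$ is motivically fibrant as a bi-spectrum by \cite{GP_MFrAlgVar}'' is not accurate as stated. The paper obtains the $\Omega_{(\Gm,S^1)}$-bi-spectrum property of $\MGfr(X//U)_f$ by first establishing it for $\MGfr(X)_f$ and $\MGfr(U)_f$, and then passing to the simplicial cone via its cofiber presentation; for $X$ and $U$ this needs two distinct inputs, namely \cite[Theorem 6.5]{GP_MFrAlgVar} for the $\Omega_{S^1}$-property and the cancellation theorem \cite[Theorem A]{AGP-FrCanc} for the $\Omega_{\Gm}$-property, the latter of which is not contained in \cite{GP_MFrAlgVar}. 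Also, the stable motivic weak equivalence $\Sigma^\infty_{\Gm}\Sigma^\infty_{S^1}(X/U)\to\MGfr(X//U)_f$ is taken from \cite[Theorem 11.4]{GP_MFrAlgVar} rather than a ``$\Gm$-stabilised form of Theorem 10.1.''
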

\begin{proof}
By the definition 
$\MGfr(-)=C^*(\MGcSfr(-))$.
So by \eqref{eq:MGcS(X,U)=MGcS(X//U)}
\[\MGfr(X//U)_f\to \MGfr(X,U)_f\]
is a levelwise schemewise simplicial equivalence of simplicial sheaves.
So it is enough to prove the claim for the spectrum $\MGfr(X//U)_f$. 

By 
\cite[Theorem 11.4]{GP_MFrAlgVar} 
the canonical morphism 
\[\Sigma^\infty_{\Gm}\Sigma^\infty_{S^1}(X/U)\to \MGfr(X//U)_f\]
is stable motivic weak equivalence.

it follows by \cite[Corollary 7.5]{GP_MFrAlgVar}
that 
each term $C^*( \Fr( (X//U)\wedge \Gm^{\wedge i}\wedge S^j ) )_f$, $j>0$, of the spectrum $\MGfr(X//U)_f$ is motivically fibrant.

The bi-spectra $\MGfr(X)_f$ and $\MGfr(U)_f$ are $\Omega_{S^1}$-bi-spectra by \cite[Theorem 6.5]{GP_MFrAlgVar} and are $\Omega_{\Gm}$-bi-spectra by \cite[Theorem A]{AGP-FrCanc}.
Then it follows that $\MGfr(X//U)_f$
is an $\Omega_{(\Gm,S^1)}$-bi-spectrum.
\end{proof}
\begin{corollary}\label{cor:MfrcomputinfloopSH^S1}
The $S^1$-spectrum of pointed simplicial sheaves
$\Mfr(X,U)_f$ is a motivically fibrant $\Omega$-spectrum in positive degrees
and has the homotopy type of 
$\Omega^\infty_{\mathbb G_m}\Sigma^\infty_{\mathbb G_m}\Sigma^\infty_{S^1}(X/U)$ in $\mathbf{SH}^{S^1}(k)$. 
\end{corollary}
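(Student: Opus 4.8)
The plan is to reduce the assertion, via the Cone Theorem, to the corresponding statement for the simplicial cone $X//U$, which is exactly the Garkusha--Panin computation \cite[Theorem 11.7]{GP_MFrAlgVar} (together with \cite[Theorems 10.1, 7.6]{GP_MFrAlgVar}) applied to the simplicial scheme $X//U$. First I would note that $\Mfr(X,U)_f$ is the zeroth $\Gm$-level of the bi-spectrum $\MGfr(X,U)_f$ of \eqref{eq:intr:MGfr(X,U)f}, and that by Corollary \ref{cor:MSfreqiv} the canonical morphism $\Mfr(X//U)\to\Mfr(X,U)$ is a levelwise Nisnevich local equivalence in positive degrees; by proposition \ref{prop:motfibOmegaMfr(X,U)} both level injective local fibrant replacements $\Mfr(X//U)_f$ and $\Mfr(X,U)_f$ are levelwise motivically fibrant $\Omega$-spectra in positive degrees. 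Hence both assertions of the corollary reduce to the same statements about $\Mfr(X//U)_f$.

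For the simplicial cone, $\Mfr(X//U)$ is, by the definition extending \eqref{eq:intr:SpectrumFormula(Mfr(X))} to simplicial schemes, the framed-motive spectrum of $X//U$; hence \cite[Theorem 11.7]{GP_MFrAlgVar} gives that $\Mfr(X//U)_f$ has the homotopy type of $\Omega^\infty_{\Gm}\Sigma^\infty_{\Gm}\Sigma^\infty_{S^1}$ evaluated on $X//U\in\mathbf{H}_\bullet(k)$, while \cite[Theorems 10.1, 7.6]{GP_MFrAlgVar} give that it is a motivically fibrant $\Omega$-spectrum in positive degrees. Since $X//U\to X/U$ is a motivic equivalence (Theorem \ref{th:ConeTh}, equivalently $X//U\simeq X/U$ in $\mathbf{H}_\bullet(k)$), the functor $\Omega^\infty_{\Gm}\Sigma^\infty_{\Gm}\Sigma^\infty_{S^1}$ takes the same value on $X//U$ and on $X/U$, which is the homotopy-type assertion.

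It remains to phrase motivic fibrancy and the homotopy type directly in terms of the constructions of this paper rather than through $X//U$: for this one invokes Corollary \ref{cor:MGfrresolution}, by which $\MGfr(X,U)_f$ is a motivically fibrant $\Omega_{(\Gm,S^1)}$-bi-spectrum in $S^1$-positive degrees, so that its zeroth $\Gm$-level $\Mfr(X,U)_f$ is a motivically fibrant $\Omega$-spectrum in positive degrees; being moreover a fibrant $\Omega_{\Gm}$-bi-spectrum, its zeroth $\Gm$-level computes $\Omega^\infty_{\Gm}$ of the stable object it represents, which is $\Sigma^\infty_{\Gm}\Sigma^\infty_{S^1}(X/U)$ by Corollary \ref{cor:MGfrresolution}. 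I expect the only genuine obstacle to be this bookkeeping --- tracking the passage between the bi-spectrum, its diagonal, and the $\Gm$-infinite-loop space, and matching the two normalisations of ``has the homotopy type of'' --- since all the substantive geometric input has already been absorbed into the Cone Theorem (Theorem \ref{th:ConeTh}) and the moving lemma of section \ref{sect:MovingLemma}.
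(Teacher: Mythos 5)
Your third paragraph is exactly the paper's proof, which is the one-line observation that the claim follows from Corollary~\ref{cor:MGfrresolution} because $\Mfr(X,U)_f$ is the zero $\Gm$-row of the bi-spectrum $\MGfr(X,U)_f$. The first two paragraphs re-derive, by going through $X//U$ via Corollary~\ref{cor:MSfreqiv}, Proposition~\ref{prop:motfibOmegaMfr(X,U)} and \cite[Theorem~11.7]{GP_MFrAlgVar}, precisely what has already been packaged into Corollary~\ref{cor:MGfrresolution}; they are correct but redundant once you invoke that corollary, and the paper simply skips them.
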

\begin{proof}
The claim follows form corollary \ref{cor:MGfrresolution}, since
$\Mfr(X,U)_f$ is the zero $\Gm$-row of the spectrum $\MGfr(X,U)_f$.
\end{proof}

\section{Appendix. Framed motivic spectra.}
In the appendix we summarise the definitions, constructions, and lemmas on framed correspondences and framed motivic spectra
used in the text. Some of definitions recovers the ones from the section \ref{sect:FrCorandMot}.





\begin{definition}
(i)
%
We call by the category of \emph{pointed smooth open pairs} $\Sm^\mathrm{pair}_\bullet(k)$ the category with objects $(X,Z,U)$ given by $X\in \Sm_k$, closed subscheme 
$Z\subset X$, and open 
$U\subset X$;
a morphism 
form $(X_1,Z_1,U_1)$ to $(X_2,Z_2,U_2)$ is given by $f\colon X_1\to X_2$, $f^{-1}(Z_2)\supset Z_1$, $f^{-1}(U_2)\supset U_1$.

Define the subcategory of \emph{pointed schemes} $\Sm_\bullet(k)$ in $\Sm^\mathrm{pair}_\bullet(k)$ spanned
by objects of the type $(X,Z,\emptyset)$;
and the subcategory of \emph{open pairs} $\Sm^\mathrm{pair}_k$ spanned by objects of the type $(X,\emptyset,U)$.

(ii)
Define the smash-product 
$-\wedge -\colon \Sm^\mathrm{pair}_\bullet(k)\times \Sm^\mathrm{pair}_\bullet(k) \to  \Sm^\mathrm{pair}_\bullet(k)\colon$
\[ ( (X_1,Z_1,U_1) , (X_1,Z_1,U_1) ) \mapsto  (X_1\times X_2 , Z_3,U_3 ), \]
$Z_3=(Z_1\times X_2)\cup (Z_2\times X_1)$, $U_3=(U_1\times X_2)\cup (U_2\times X_1) $.
\end{definition}

\begin{definition}\label{def:Fr(X/U)pointed}
Let $(X,Z,U)\in \Sm^\mathrm{pair}_\bullet(k)$.
Define the pointed sheaf 
\[\Fr( (X,Z)/U )=\colim( *\leftarrow \Fr( Z/(U\times_X Z) )\to \Fr(X/U) )\in \Shv_\bullet,\] 
see def. \ref{def:Fr(X/U)} for $\Fr(X/U)$.
%
\end{definition}

\begin{definition}\label{def:FrcalX/calU}
Denote by $\Delta^\mathrm{op}\Sm^\mathrm{pair}_\bullet(k)$ 
the category of simplicial objects $\Sm^\mathrm{pair}_\bullet(k)$.
Define the functor 
\[\Fr\colon \Delta^\mathrm{op}\Sm^\mathrm{pair}_\bullet(k)\to \sShv_\bullet \]
as the functor induced by 
$
\Sm^\mathrm{pair}_\bullet(k)\to \Shv_\bullet \colon 
(X,Z,U)  \mapsto  \Fr( (X,Z)/U )
.$ 
%
\end{definition}

\begin{definition}
A set of morphisms $X\times \A^n\to X$ for all schemes $X$
we call by \emph{strong naive $\A^1$-local equivalences of schemes}.

A morphism of simplicial scheme $f\colon \mathcal X\to \mathcal Y$ is called as
\emph{strong naive $\A^1$-local equivalence} iff
$f$ is termwise \emph{strong naive $\A^1$-local equivalence of schemes}.
\end{definition}

\begin{remark}\label{rem:Fr(A1eqiv)}
Any strong naive $\A^1$-local equivalence of simplicial smooth schemes $\mathcal X\to \mathcal Y$ 
induces the $\A^1$-local equivalence 
$\Fr(\mathcal X)\to \Fr(\mathcal Y) $. 

More generally a strong naive $\A^1$-local equivalence of simplicial smooth pairs $(\mathcal X_1,\mathcal U_1)\to (\mathcal X_2,\mathcal U_2)$ 
induces the 
the schemewise simplicial equivalence 
\[C^*(\Fr(\mathcal X_1/\mathcal U_1))\to C^*(\Fr(\mathcal X_2/\mathcal U_2)).\]

\end{remark}

\begin{definition}\label{def:MSfr}
Let $(X,U)$ be a pair given by smooth affine scheme $X$ and open subscheme $U\subset X$, or a simplicial pair.
Define an $S^1$-spectrum of pointed simplicial sheaves 
\[\MSfr(X,U) = ( \Fr(X/U), \Fr( (X/U) \wedge S^1), \dots \Fr( (X/U) \wedge S^i), \dots )\]
%
\end{definition}

\begin{definition}\label{def:MGcSfr}
Let $(X,U)$ be as in def. \ref{def:MSfr}.
Define an $(S^1,\Gm^{\wedge 1})$-spectrum of pointed simplicial sheaves 
$\MGcSfr(X,U)$ as the bi-spectrum with the terms \[\Fr( (X/U) \wedge S^i\wedge \Gm^{\wedge j})\],
where $\Gm^{\wedge 1}=\Gm//\{1\}$, see def. \ref{def:SimplCone}, and 
see def. \ref{def:FrcalX/calU} for $\Fr(\Gm^{\wedge 1})$. 
%
Define $\MGcSfr(X)=\MGcSfr(X,\emptyset)$.
\end{definition}

\begin{definition}\label{def:MGwSfr}
Let $(X,U)$ be as in def. \ref{def:MSfr}.
Define an $(\Gm^{\wedge 1}\wedge S^1)$-spectrum of pointed simplicial sheaves 
$\MGwSfr(X,U)$ as
\[( \Fr(X/U), \Fr( (X/U) \wedge \Gm^{\wedge 1}\wedge S^1), \dots \Fr( (X/U) \wedge \Gm^{\wedge i}\wedge S^i), \dots ).\]
%
\end{definition}
\begin{remark}
$\MGwSfr(X,U)$ is the diagonal of the bi-spectrum $\MGcSfr(X,U)$.
\end{remark}

\begin{definition}\label{def:McTfr}
Let $(X,U)$ be as in def. \ref{def:MSfr}.
Define an $(\A^1//(\A^1-0))$-spectrum of pointed simplicial sheaves $\McTfr(X,U)$ as
\[( \Fr(X/U), \Fr( (X/U) \wedge (\A^1//(\A^1-0)) ), \dots  \\ \Fr( (X/U) \wedge (\A^1//(\A^1-0))^{\wedge i} ), \dots ).\]
\end{definition}

\begin{remark}\label{rem:MGwSfr=McTfr}
It follows by remark \ref{rem:Fr(A1eqiv)}
that 
the terms of spectra $\MGwSfr(X,U)$ and $\McTfr(X,U)$ are levelwise $\A^1$-local equivalent.
Actually, 
we have a strong naive $\A^1$-local equivalence of pointed simplicial schemes
$\A^1//\Gm\to \mathrm{pt}//\Gm$,
and an simplicial equivalence of pointed simplicial schemes
$(\mathrm{pt}//\Gm)\simeq {\Gm^{\wedge 1}}\wedge S^1.$
Hence
there is an $\A^1$-local equivalence of pointed simplicial presheaves
$\Fr( (X/U)\wedge (\A^1//\Gm)^{\wedge i} )\to \Fr( (X/U)\wedge (\Gm^{\wedge i}\wedge S^i) )$.
\end{remark}

\begin{definition}\label{def:MTfr}
Let $(X,U)$ be as in def. \ref{def:MSfr}.
Define an $(\A^1/(\A^1-0))$-spectrum of pointed simplicial sheaves 
\[\MTfr(X,U) = ( \Fr(X/U), \Fr( (X/U) \wedge T ), \dots \Fr( (X/U) \wedge T^i ), \dots ),\]
where $T$ denotes the open pair $(\A^1,\A^1-0)$, see \eqref{eq:(X1,U1)wedge(X2,U2)} for the smash-product.
the structure maps are given by
\[\begin{array}{lll}
\Fr( S,  (X/U) \wedge T^i ) &\to & 
Hom_\bullet(\A^1/(\A^1-0)\times S, \Fr( (X/U) \wedge T^{i+1} ) ) \\
(Z,V,\varphi_1,\dots \varphi_n, g) & \mapsto & 
(0\times Z,\A^1\times V,\varphi_1,\dots \varphi_n, (t,g) ),
\end{array}\]
where 
$Hom_\bullet$ denotes the pointed hom-set in 
$\sShv_\bullet$,
\begin{equation}\label{eq:SigmainOmega(Fr)}
(0\times Z,\A^1\times V,\varphi_1,\dots \varphi_n, (t,g) )\in \Fr(\A^1\times S, (X/U) \wedge T^{i+1} ),
\end{equation}
where the function $t$ is the coordinate on $\A^1$, 
and \eqref{eq:SigmainOmega(Fr)} is considered as morphism of simplicial sheaves $\A^1\times S\to \Fr( (X/U) \wedge T^{i+1} )$, that passes throw $\A^1/(\A^1-0)\times S$.

\end{definition}

The canonical morphism of pointed sheaves 
$\PP^{\wedge 1}\to \A^1/(\A^1-0)$ induces the functor  \[\nu\colon \Spec_{\A^1/(\A^1-0)}(\sShv_\bullet)\to \Spec_{\PP^{\wedge 1}}(\sShv_\bullet)\]
form the category of $(\A^1/(\A^1-0))$-spectra to $\PP^{\wedge 1}$-spectra of pointed simplicial sheaves.

\begin{definition}\label{def:MPfr}
Let $(X,U)$ be as in def. \ref{def:MSfr}.
Define $\PP^{\wedge 1}$-spectra \[\MPfr(X,U)=\nu(\MTfr(X,U)), \MPfr(X)=\nu(\MTfr(X)).\]
\end{definition}

\begin{remark}\label{rem:MTtoMPeq}
It follows immediate form the definition that any (stable motivic) equivalence of $(\A^1/(\A^1-0))$-spectra
$\MTfr(\mathcal X,\mathcal U)\simeq \MTfr(\mathcal X^\prime,\mathcal U^\prime)$, for some simplicial pairs of smooth schemes $(\mathcal X,\mathcal U)$, $(\mathcal X^\prime, \mathcal U^\prime)$, induces the (stable motivic) equivalence of $\PP^{\wedge 1}$-spectra $\MPfr(\mathcal X,\mathcal U)\simeq \MPfr(\mathcal X^\prime,\mathcal U^\prime)$.
\end{remark}

\begin{definition}
For any $\mathrm{M}^{*}_\fr$ from def. \ref{def:MSfr}-\ref{def:MPfr}
set $\mathrm{M}^{*}_\fr(X)=\mathrm{M}^{*}_\fr(X,\emptyset)$.
\end{definition}

\begin{definition}\label{def:MfrMPMGfrAppensix}
Define 
the following 
$S^1$-spectrum, $(\Gm,S^1)$-bi-spectrum and $\PP^{\wedge 1}$-spectrum 
of pointed simplicial sheaves
\begin{gather*}\Mfr(-)=C^*(\MSfr(-)), \MGfr(-) = C^*(\MGcSfr(-)), \\ \MP(X,U)=C^*(\MPfr(-))\end{gather*}
respectively.
The notation 
are agreed with \cite{GP_MFrAlgVar}.
\end{definition}


\begin{proposition}\label{prop:motfibOmegaMfr(X,U)}
For any smooth scheme $X$ over an infinite perfect field $k$ and open subscheme $U$ the spectrum
$\Mfr(X,U)_f$ is a levelwise motivically fibrant $\Omega_{S^1}$-spectrum in positive degrees.
\end{proposition}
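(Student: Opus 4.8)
## Proof proposal for Proposition \ref{prop:motfibOmegaMfr(X,U)}

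The plan is to reduce the statement for the pair $(X,U)$ to the absolute case, which is already established in \cite{GP_MFrAlgVar}. The key observation is that $\Mfr(X,U)$ sits in a levelwise cofiber sequence (or rather a simplicial-cone sequence) built from the absolute spectra $\Mfr(U)$ and $\Mfr(X)$. Concretely, by definition \ref{def:Fr(X/U)pointed} and the simplicial equivalence $\Fr_n(X//U)\simeq\Fr_n(X)/\Fr_n(U)$ noted after corollary \ref{cor:FrPairFreqNisEq}, together with the equality $\Mfr(X/U)=\Mfr(X//U)$ recorded in the introduction, the spectrum $\Mfr(X,U)$ is levelwise Nisnevich-locally equivalent (in positive degrees, by corollary \ref{cor:MSfreqiv}) to $\Mfr(X//U)$, and the latter is the levelwise homotopy cofiber of the map of $S^1$-spectra $\Mfr(U)\to\Mfr(X)$.

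First I would invoke the absolute result: by \cite[Theorem 10.1(2), 7.6]{GP_MFrAlgVar} (stated as the theorem following definition \ref{def:MPfrMP} in the excerpt, via the functor $\Mfr$), for any smooth $X$ over the infinite perfect field $k$, the injective-local fibrant replacement $\Mfr(X)_f$ is a levelwise motivically fibrant $\Omega_{S^1}$-spectrum in positive degrees; the same holds for $\Mfr(U)_f$, as $U$ is again a smooth scheme. Second, I would pass to fibrant replacements throughout: since $(-)_f$ is exact in the relevant sense on the injective local model structure, $\Mfr(X//U)_f$ is the levelwise homotopy cofiber of $\Mfr(U)_f\to\Mfr(X)_f$. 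The homotopy cofiber of a map of motivically fibrant spectra, taken levelwise in the injective local model structure and then fibrantly replaced, remains levelwise motivically fibrant, because motivically fibrant objects (in positive degrees) are closed under homotopy cofibers here — this uses that the subcategory of motivically local objects is closed under the relevant homotopy colimits of this shape, or equivalently that a cofiber sequence of $\Omega_{S^1}$-spectra whose two outer terms are $\Omega$-spectra in positive degrees has a middle term that is again an $\Omega$-spectrum in positive degrees, by the long exact sequence of stable homotopy sheaves and the five lemma applied to the structure map $F_l\to\Omega_{S^1}F_{l+1}$.

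Third, I would transport this back to $\Mfr(X,U)_f$ using corollary \ref{cor:MSfreqiv}, which gives a levelwise Nisnevich local equivalence $\Mfr(X//U)\to\Mfr(X,U)$ in positive degrees; a levelwise Nisnevich local equivalence between spectra becomes a levelwise schemewise simplicial equivalence after injective-local fibrant replacement, so $\Mfr(X,U)_f\simeq\Mfr(X//U)_f$ levelwise in positive degrees, and the $\Omega_{S^1}$-property plus levelwise motivic fibrancy is preserved under such an equivalence. One must be slightly careful that corollary \ref{cor:MSfreqiv} is stated ``in positive degrees,'' which is exactly the range the proposition asserts, so no issue arises at level $0$.

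The main obstacle I anticipate is the closure of ``motivically fibrant $\Omega_{S^1}$-spectrum in positive degrees'' under the homotopy cofiber operation — one has to check both that each level of the cofiber is $\A^1$-invariant and Nisnevich-local (which follows since motivically local simplicial sheaves are closed under finite homotopy colimits, using that $\Fr$ applied to the simplicial cone is, up to simplicial equivalence, a quotient $\Fr(-,X)/\Fr(-,U)$, together with the Mayer–Vietoris type inputs from corollary \ref{cor:MayerVietoris}), and that the structure maps remain weak equivalences, which is the five-lemma argument above. The rest is formal manipulation of model-category replacements. Alternatively, and perhaps more cleanly, one can avoid the cofiber-sequence bookkeeping entirely by deducing the proposition directly from corollary \ref{cor:MPmotresolution} (or corollary \ref{cor:MGfrresolution}): once $\MP(X,U)_f$ is known to be a positively fibrant $\Omega_{\PP^1}$-spectrum and $\MGfr(X,U)_f$ an $\Omega_{(\Gm,S^1)}$-bi-spectrum in $S^1$-positive degrees, the zero $\Gm$-row $\Mfr(X,U)_f$ is a fibrant $\Omega_{S^1}$-spectrum in positive degrees by restriction; however, since corollary \ref{cor:MSfreqiv} is used in the proof of corollary \ref{cor:MPmotresolution} and itself cites this proposition, to avoid circularity I would in fact carry out the direct cofiber-sequence argument sketched here, which depends only on the absolute theorem of \cite{GP_MFrAlgVar} and on corollary \ref{cor:MSfreqiv}.
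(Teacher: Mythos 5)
Your proposal does not match the paper's proof and has genuine gaps. The paper's actual argument is a one-liner: it applies \cite[Corollary~7.5]{GP_MFrAlgVar} directly to the pointed (simplicial) sheaf $(X/U)\wedge S^1$, obtaining that $\Mfr((X,U)\wedge S^1)_f$ is a levelwise motivically fibrant $\Omega_{S^1}$-spectrum, and then observes that this spectrum is by construction the shift of $\Mfr(X,U)_f$. The crucial point is that the cited result from \cite{GP_MFrAlgVar} already applies to the quotient sheaf $X/U$ smashed with $S^1$ (it is not restricted to representable sheaves $X$), because the strict $\A^1$-invariance of the homotopy sheaves $\pi_n C^*\Fr(-,\mathcal{A}\wedge S^1)_{\nis}$ there is proved for general enough input $\mathcal{A}$. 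No Cone Theorem, no cofiber argument, and no affineness hypothesis are needed.

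Your cofiber-sequence route breaks down at two points. First, the assertion that motivically local (fibrant) simplicial sheaves are ``closed under finite homotopy colimits'' is false: the cofiber of $\Gm\hookrightarrow\A^1$ is $T=\A^1/\Gm$, which is not $\A^1$-invariant as a simplicial sheaf, even though both $\Gm$ and $\A^1$ are. Closure under cofibers holds only after $S^1$-stabilization, whereas the proposition asserts the much stronger \emph{levelwise} motivic fibrancy of each simplicial sheaf $C^*\Fr(-,(X/U)\wedge S^l)_f$; your five-lemma argument would control the stable homotopy sheaves but not levelwise fibrancy. The reason levelwise fibrancy holds here is not formal closure under cofibers, but the specific homotopy-invariance/excision theorems for framed correspondences in \cite{GP_MFrAlgVar}, which is exactly what Corollary~7.5 encapsulates. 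Second, your transport step via Corollary~\ref{cor:MSfreqiv} is circular: the proof of that corollary cites Proposition~\ref{prop:motfibOmegaMfr(X,U)} to upgrade the levelwise motivic equivalence to a Nisnevich local one; moreover that corollary is stated only for $X$ affine, whereas the proposition in question is for arbitrary smooth $X$, so you would prove a strictly weaker statement even if the circularity were resolved.
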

\begin{proof}
By \cite[corollary 7.5]{GP_MFrAlgVar} the spectrum $\Mfr( (X,U)\wedge S^1 )_f$ is levelwise motivically fibrant $\Omega_{S^1}$-spectrum.
Hence the claim follows, since by construction $\Mfr( (X,U)\wedge S^1 )_f$ is equal to the shift of $\Mfr( X,U )_f$.
\end{proof}

\bibliographystyle{unsrt} 
\bibliography{ifFrp-JPAA-ref}

\end{document}